\newtheorem{theorem}{Theorem}[section]
\newtheorem{conjecture}[theorem]{Conjecture}
\newtheorem{lemma}[theorem]{Lemma}
\newtheorem{remark}[theorem]{Remark}
\newtheorem{proposition}[theorem]{Proposition}
\newtheorem{question}{Question}
\newtheorem{definition}[theorem]{Definition}
\theoremstyle{definition}
\newcommand{\cL}{\mathcal{L}}
\newcommand{\cU}{\mathcal{U}}
\newcommand{\cV}{\mathcal{V}}
\newcommand{\cW}{\mathcal{W}}
\newcommand{\bA}{\mathbf{A}}
\newcommand{\bF}{\mathbf{F}}
\newcommand{\bM}{\mathbf{M}}
\newcommand{\Z}{\mathbb Z}
\newcommand{\N}{\mathbb N}
\newcommand{\F}{\mathbb F}
\newcommand{\e}{\varepsilon}
\newcommand{\Fp}{\mathbb{F}_p}
\renewcommand{\L}{\mathbb L}
\def\bA{\mathbf A}
\def\bE{\mathbf E}
\def\set4{\mathcal I}
\def\tup14{(1,2,3,4)}
\newcommand{\R}{\mathbb{R}}
\newcommand{\De}{\Delta} 
\newcommand{\ga}{\gamma}
\newcommand{\wt}{\widetilde}
\newcommand{\si}{\sigma}
\newcommand{\bzz}{\mathbf{0}}
\newcommand{\lam}{\lambda}
\begin{document}

\begin{frontmatter}[classification=text]

\title{Exceptional set estimate in prime fields: dimension two implies higher dimensions} 

\author[sgan]{Shengwen Gan}

\begin{abstract}
In this paper, we study in prime fields the exceptional set estimates, which can be viewed as a refinement of Marstrand's orthogonal projection theorem. Additionally, we address a Furstenberg-type problem, which is closely related.
It is shown that the two-dimensional result implies all higher-dimensional results in the prime-field setting.
\end{abstract}
\end{frontmatter}


\section{Introduction}

We study the exceptional set estimate and a Furstenberg-type problem in prime fields. The main novelty of this paper is that we prove, for these two problems, that the two-dimensional result implies all higher-dimensional results. 

\subsection{Background}

We first introduce the two problems---the Furstenberg set problem and the exceptional set estimate in Euclidean space.

\subsubsection{Furstenberg set problem}\hfill

The Furstenberg set problem was originally considered by Wolff \cite{wolff1999recent} in $\R^2$. Fix $s\in (0,1], t\in (0,2]$. We say $E\subset \R^2$ is an $(s,t)$-set in $\R^2$, if $E$ is of the form
\[ E=\bigcup_{\ell\in \cL}Y(\ell), \]
where $\cL\subset A(1,\R^2)$ is a set of lines in $\R^2$ with $\dim\cL=t$ and for each $\ell\in \cL$ there exist $Y(\ell)\subset \ell$ with $\dim Y(\ell)=s$. ($\dim$ will always mean Hausdorff dimension.) The Furstenberg set problem asks about the optimal lower bound on the Hausdorff dimension of $(s,t)$-sets, that is,
\[ \inf_{E \textup{~is~a~}(s,t)\textup{-set}}\dim(E). \]
Recently, Ren and Wang \cite{ren2023furstenberg} resolved the Furstenberg set problem by proving
\begin{equation}\label{renwang}
\inf_{E \textup{~is~a~}(s,t)\textup{-set}}\dim(E)=\min\{s+t,\frac{3}{2}s+\frac{1}{2}t,s+1  \}.
\end{equation} 
For more references and history on this problem, we refer to \cite{ren2023furstenberg}, \cite{orponen2023projections}.

The Furstenberg set problem was also considered in higher dimensions; see, for example, \cite{hera2019hausdorff}, \cite{dkabrowski2022integrability}, \cite{li2024orthogonal}. It can be formulated similarly as follows.

Fix integers $1\le k<n$. Fix $s\in [0,k], t\in [0,(k+1)(n-k)]$. We say $E\subset \R^n$ is a $(s,t;n,k)$-set in $\R^n$, if $E$ is of the form
\[ E=\bigcup_{V\in \cV}Y(V), \]
where $\cV\subset A(k,\R^n)$ is a set of $k$-planes $\R^n$ with $\dim\cV=t$ and for each $V\in \cV$ there exists $Y(V)\subset V$ with $\dim Y(V)=s$. The Furstenberg set problem in $\R^n$ for $k$-planes asks us to compute
\[ \inf_{E \textup{~is~a~}(s,t;n,k)\textup{-set}}\dim(E). \]
The Furstenberg set problem in higher dimensions remains wide open.

\begin{remark}
    {\rm The original Furstenberg set problem requires the $k$-planes in $\cV$ to have distinct directions. We will not address this version, as it is more challenging than Kakeya conjecture which is open in higher dimensions. Though we still use the term ``Furstenberg set problem" to refer to this non-direction-distinct version addressed in this paper, and we hope this does not cause confusion. For progress on the direction-distinct Furstenberg set problem in finite fields, see \cite{dhar2022linear}, \cite{dhar2021simple}, \cite{dhar2024furstenberg}, \cite{ellenberg2016furstenberg}. }
\end{remark}

\bigskip

\subsubsection{Exceptional set estimate}\hfill

Next, we introduce the problem of the exceptional set estimate.
We first introduce this problem over $\R$. The projection theory dates back to Marstrand \cite{marstrand1954some}, who showed that if $A$ is a Borel set in $\R^2$, then the projection of $A$ onto almost every one-dimensional subspace has Hausdorff dimension $\min\{1,\dim A\}$. This was generalized to $\R^n$ by Mattila \cite{mattila1975hausdorff} who showed that if $A$ is a Borel set in $\R^n$, then the projection of $A$ onto almost every $k$-dimensional subspace has Hausdorff dimension $\min\{k,\dim A\}$.

One can consider a refined version of Marstrand's projection problem, which is known as the exceptional set estimate.

For $V\in G(k,\R^n)$, we use $\pi_V:\R^n\rightarrow V$ to denote the orthogonal projection. For $A\subset \R^n$ and $s>0$, we define the exceptional set
\[ E_s(A;n,k):=\{V\in G(k,\R^n): \dim(\pi_V(A))<s\}. \]
The exceptional set estimate is about finding the optimal upper bound of $\dim(E_s(A;n,k))$ (in terms of $\dim A, s,n,k$), that is, 
\[ \sup_{A\subset \R^n, \dim(A)=a}\dim(E_s(A;n,k)). \]
For technical reasons, $A$ is always assumed to be Borel.

Much progress has been made on this problem; see, for example, \cite{kaufman1968hausdorff}, \cite{falconer1982hausdorff}, \cite{peres2000smoothness}, \cite{orponen2023hausdorff}, \cite{orponen2023projections}.

In $\R^2$, D. Oberlin \cite{oberlin2012restricted} conjectured that for $0<s\le \min\{1,a\}$ and Borel set $A\subset \R^2$ with $\dim A=a$, one has the following exceptional set estimate
\begin{equation}\label{oberlin}
    \dim(E_s(A;2,1))\le \max\{2s-a,0\}. 
\end{equation} 
This conjecture was proved by Ren and Wang in \cite{ren2023furstenberg} as a corollary of \eqref{renwang}.

\bigskip

\subsection{The problems in prime fields}
In the remainder of the paper, we look only at the prime fields. Let $p$ be a prime and $\Fp$ be the prime field with $p$ elements. Throughout the paper, $A\lessapprox B$ means $A\le  C (\log p)^{C} B$ for some large constant $C$ (independent of $p$). $A\approx B$ means both $A\lessapprox B$ and $B\lessapprox A$.  $A\lesssim B$ means $A\le C B$ for a large constant $C$ (independent of $p$). $A\sim B$ means both $A\lesssim B$ and $B\lesssim A$. 

\subsubsection{Furstenberg set problem}

\begin{definition}[$(s,t;n,k)$-set]
    Fix integers $1\le k<n$, and numbers $0\le s\le k$, $0\le t\le (k+1)(n-k)$. Let $0<\lam\le 1$. We say $E$ is a $(s,t;n,k;\lam)$-Furstenberg set (over $\Fp$), if $E$ is a subset of $\Fp^n$ of form
    \[ E=\bigcup_{V\in\cV}Y(V). \]
    Here, $\cV\subset A(k,\Fp^n)$ is a set of $k$-planes with $\#\cV\ge \lam p^t$; for each $V\in\cV$, $Y(V)$ is a subset of $V$ with $\#Y(V)\ge \lam p^s$. 
\end{definition}

\begin{remark}
   {\rm We remark that $s,t,n,k$ are important parameters, while $\lam$ is not.
   Since $\lambda$ is not an important parameter, from now on we will drop $\lam$ from the notation and simply call $(s,t;n,k;\lam)$-set as $(s,t;n,k)$-set, thinking of $\lam$ as a fixed constant in $(0,1]$. 
   }
\end{remark}

We give the following definition.
\begin{definition}
    We say $(s,t;n,k)$ is admissible if $1\le k<n$ are integers and $0\le s\le k$, $0\le t\le (k+1)(n-k)$.
\end{definition}

The Furstenberg set problem can be formulated as follows:

\begin{question}
Suppose $(s,t;n,k)$ is admissible.
Find the largest $\bF(s,t;n,k)$, so that 
\begin{equation}\label{deff}
    \inf_{E:\ (s,t;n,k)\textup{-set}} \#E\gtrsim_{s,t,n,k,\e} p^{\bF(s,t;n,k)-\e} 
\end{equation}
holds for any $\e>0$.
\end{question}

In the above inequality, the implicit constant may depend on many parameters, but never depends on $p$. For simplicity, we may view $s,t,n,k$ as fixed parameters and write $\gtrsim_{s,t,n,k,\e}$ as $\gtrsim_\e$ throughout the paper. The dependence on $s,t,n,k$ will be mentioned when needed. 
We are interested in the behavior as $p$ goes to infinity, so the goal is to find the index $\bF(s,t;n,k)$ for fixed $(s,t;n,k)$.

We see that the problem consists of two parts:
for fixed $(s,t;n,k)$,
\begin{enumerate}
    \item (Lower bound) show for any $(s,t;n,k)$-set $E$, we have $\#E\gtrsim_{\e} p^{\bF(s,t;n,k)-\e}$;
    \item (Upper bound) show the existence of a $(s,t;n,k)$-set $E$, such that $\#E\lesssim p^{\bF(s,t;n,k)}$.
\end{enumerate}

\medskip

Based on Ren-Wang's result \eqref{renwang}, it is reasonable to make a conjecture in $\Fp^2$.

\begin{conjecture}\label{conj2d}
    Fix $s\in(0,1],t\in (0,2]$. Then for any $(s,t;2,1)$-set $E$ over $\Fp$, we have
    \[ \#E\gtrsim_{s,t,\e} p^{\min\{s+t,\frac{3}{2}s+\frac{1}{2}t,s+1  \}-\e}, \]
    for any $\e>0$.
\end{conjecture}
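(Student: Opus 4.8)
Since Conjecture~\ref{conj2d} is the prime-field analogue of \eqref{renwang}, the plan is to try to run a finite-field version of the Ren--Wang argument; here is the route I would take and where I expect the real difficulty to lie. First I would pass to a regular configuration. Write $I:=\#\{(x,V):V\in\cV,\ x\in Y(V)\}\approx p^{s+t}$ for the incidence count and $\mu(x):=\#\{V\in\cV:x\in Y(V)\}$ for the multiplicity of a point. Dyadic pigeonholing lets me replace $E$, $\cV$ and $Y(\cdot)$ by refined versions on which $\#Y(V)\approx p^{s}$ for every $V$, $\#\cV\approx p^{t}$, the multiplicity $\mu(x)\approx m$ is essentially constant on $E$ for a single dyadic value $m$, and hence $\#E\approx I/m\approx p^{s+t}/m$. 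Since
\[
\min\{s+t,\ \tfrac{3}{2}s+\tfrac{1}{2}t,\ s+1\}\ =\ s+t-\max\{0,\ \tfrac{t-s}{2},\ t-1\},
\]
it suffices to prove the single multiplicity estimate $m\lesssim_{\e}p^{\max\{0,(t-s)/2,t-1\}+\e}$, so after this reduction everything comes down to bounding $m$.

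Next I would dispose of the two elementary corners. If $t\le s$, then two distinct lines meet in one point, so $\sum_x\mu(x)^2\le\sum_V\#Y(V)+\binom{\#\cV}{2}\lesssim p^{s+t}+p^{2t}\lesssim p^{s+t}$, while regularity gives $\sum_x\mu(x)^2\approx m^{2}(p^{s+t}/m)=m\,p^{s+t}$; comparing, $m\lesssim 1$, which is exactly the claim there (and gives the sharp value $\bF(s,t;2,1)=s+t$ when $t\le s$). At the opposite extreme $t=2$, essentially every line of $\Fp^2$ is used, and the trivial bound $\mu(x)\le p+1$ already yields $m\lesssim p=p^{t-1}$. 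The content of the conjecture lies in the interior of the parameter square, where neither crude argument is enough and one must improve $\mu(x)\le p+1$ to $m\lesssim_{\e}p^{\max\{(t-s)/2,\,t-1\}+\e}$.

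The hard part will be exactly this interior bound. Its $(t-s)/2$ half amounts to a Szemer\'edi--Trotter-strength point--line incidence estimate in $\Fp^2$; its $t-1$ half is a Kakeya-maximal-type bound (a plain bush argument yields only $\#E\gtrsim m\,p^{s}$, hence $m\lesssim p^{t/2}$, which is not enough once $t<2$). In $\R^2$ Ren--Wang extract both from their resolution of the planar Furstenberg problem via the high--low method and induction on scales, powered by the Orponen--Shmerkin projection theorem; over $\Fp$ I would try to supply single-scale substitutes of the same strength --- a prime-field analogue of the Orponen--Shmerkin exceptional-set estimate, or an incidence bound in $\Fp^2$ that genuinely exploits that each line of $\cV$ carries $\approx p^{s}$ equidistributed points of $E$ --- and feed these back into the reduction above. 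I expect this to be the genuine obstacle: the unconditional incidence bounds available in $\Fp^2$ (Stevens--de Zeeuw) have exponent $11/15>2/3$, too weak to produce $\tfrac{3}{2}s+\tfrac{1}{2}t$, and the induction on scales that rescues the Euclidean argument has no prime-field counterpart, since $\Fp$ carries no non-trivial multi-scale structure. Once the right single-scale, ``$p^{s}$ points per line'' incidence/projection estimate over $\Fp$ is available, the pigeonholing, the two elementary corners, and the remaining book-keeping should be routine --- which is presumably why this statement is so far only a conjecture.
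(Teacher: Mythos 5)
You are attempting to prove the statement labelled Conjecture~\ref{conj2d}, and the first thing to say is that the paper itself does not prove it: it is stated as an open conjecture, used as a hypothesis in Theorems~\ref{lower bound} and~\ref{exceptionalthm}, and the only unconditional content the paper supplies about it is the matching \emph{upper-bound} construction of Proposition~\ref{prop2d} (a Szemer\'edi--Trotter-type example showing the exponent $\min\{s+t,\tfrac32 s+\tfrac12 t,s+1\}$ cannot be improved). So there is no proof in the paper against which your argument could be favourably compared; any complete proof would be a new result beyond the paper.

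Your proposal, as you yourself acknowledge in the last paragraph, is a reduction plus an identification of the obstruction, not a proof. The preliminary steps are fine: the dyadic regularization giving $\#E\gtrapprox p^{s+t}/m$ for a typical multiplicity $m$, the identity $\min\{s+t,\tfrac32 s+\tfrac12 t,s+1\}=s+t-\max\{0,\tfrac{t-s}{2},t-1\}$, the Cauchy--Schwarz (two lines meet in at most one point) argument settling $t\le s$, and the trivial bound $\mu(x)\le p+1$ settling $t=2$. But the remaining claim $m\lesssim_{\e}p^{\max\{0,(t-s)/2,t-1\}+\e}$ in the interior of the parameter range is not a lemma you prove; it is essentially a restatement of the conjecture itself in incidence form, and the tools you would need to establish it are exactly what is missing over $\Fp$: a Szemer\'edi--Trotter-strength incidence bound (Stevens--de Zeeuw's exponent is too weak to yield the $\tfrac32 s+\tfrac12 t$ term), or a prime-field analogue of the Orponen--Shmerkin exceptional-set/projection input, together with some substitute for the induction on scales that drives the Ren--Wang argument in $\R^2$ --- and, as you note, $\Fp$ has no multi-scale structure to induct on (the paper's remark about $\F_{p^2}$ shows the statement is genuinely arithmetic and any proof must exploit primality, e.g.\ in the spirit of sum--product/Bourgain--Katz--Tao-type estimates). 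So the gap is precisely the entire hard step; what you have written is a sensible plan of attack and a correct diagnosis of why the statement remains a conjecture, but it does not prove the statement.
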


\begin{remark}
{\rm
    It is crucial to use the prime field $\Fp$. Otherwise, the conjecture fails. Consider the problem in $\F_q^2$, where $q=p^2$ is the square of a prime. We choose the set of lines $\cV=\{y=ax+b: a,b\in \Fp\}$, so $\#\cV=q$ and hence $t=1$. For each line $V: y=ax+b$ in $\cV$, define $Y(V):=\{(x,ax+b):x\in\Fp\}$, so $\#Y(V)=p=q^{\frac12}$ and therefore $s=\frac12$. We can calculate $\#\big(\bigcup_{V\in\cV}Y(V)\big)=p^2=q$. However, $\min\{s+t,\frac32s+\frac12t,s+1\}=\frac54$.

    Usually, people believe that the problem in a finite field is easier than in Euclidean space. However, Conjecture \ref{conj2d} remains open, whereas the Euclidean version has been resolved.
}
\end{remark}

\bigskip

In the paper, we explicitly find the function $\bF(s,t;n,k)$. The definition of $\bF(s,t;n,k)$ is tricky, so we postpone it later in Definition \ref{deffurindex}. The following are the main results.

\begin{theorem}[Upper bound]\label{upper bound}
    Suppose $(s,t;n,k)$ is admissible. Then for any prime $p$, there exits a $(s,t;n,k)$-set $E$ over $\Fp$, such that
    \[ \#E\lesssim p^{\bF(s,t;n,k)}. \] 
\end{theorem}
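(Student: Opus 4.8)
The strategy is to exhibit, for each term in the minimum defining $\bF(s,t;n,k)$ in Definition~\ref{deffurindex}, a single explicit $(s,t;n,k)$-set whose cardinality is $\lesssim p^{(\text{that term})}$; since $\bF$ is by definition the smallest of those terms, and since it is enough to produce a $(s,t;n,k;\lam)$-set for \emph{some} $\lam\sim 1$ (roundings will only cost constants), this proves the theorem. All of these sets are built from a short list of low-dimensional \emph{seed} configurations by repeatedly applying two structural operations, and it is the arithmetic of these operations that matches the arithmetic of Definition~\ref{deffurindex}.

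First I would set up the two operations. Starting from a $(s_0,t_0;n_0,k_0)$-set $E_0=\bigcup_{V\in\cV_0}Y(V)$ over $\Fp$: the \emph{product} operation replaces it by $E_0\times\Fp^a\subset\Fp^{n_0+a}$, with $(k_0{+}a)$-planes $V\times\Fp^a$ and point sets $Y(V)\times\Fp^a$; this is a $(s_0+a,\,t_0;\,n_0+a,\,k_0+a)$-set with $\#(E_0\times\Fp^a)=p^a\#E_0$. The \emph{graph lift} operation embeds $\Fp^{n_0}$ as a coordinate subspace of $\Fp^{n_0+b}$, sets $E_0':=E_0\times\Fp^b$, and for each $V\in\cV_0$ and each affine map $\phi\colon V\to\Fp^b$ takes the $k_0$-plane $V_\phi:=\{(v,\phi(v)):v\in V\}\subset E_0'$ with the $\#Y(V)$-point set $\{(y,\phi(y)):y\in Y(V)\}$; the projection $\Fp^{n_0+b}\to\Fp^{n_0}$ recovers $V$ from $V_\phi$, so these $k_0$-planes are distinct and there are $\#\cV_0\cdot p^{(k_0+1)b}$ of them, giving a $(s_0,\,t_0+(k_0+1)b;\,n_0+b,\,k_0)$-set with $\#E_0'=p^b\#E_0$. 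Both verifications (the containments $Y\subset V\subset E'$, and the two counts) are immediate from linear algebra over $\Fp$.

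Next I would record the seeds, computing $\#E$ exactly up to constants in each case. For $k_0=1$, $n_0=2$: (a) a \emph{bush} of $\lceil p^{t_0}\rceil$ lines through a common point with $Y(\ell)$ an arbitrary $\lceil p^{s_0}\rceil$-subset of $\ell$; since distinct lines meet only at the center, $\#E\sim p^{s_0+t_0}$ (valid for $t_0\le 1$, which is exactly where $s_0+t_0$ is the least of the three $2$-dimensional candidates). (b) A \emph{slab} $B\times\Fp$ with $\#B=\lceil p^{s_0}\rceil$, using $\lceil p^{t_0}\rceil$ of the $\gtrsim p^2$ lines that are non-vertical (each meeting $E$ in $\#B$ points) or vertical over $B$; here $\#E=p\#B\sim p^{s_0+1}$, valid for $t_0\le 2$ and in particular for $t_0\ge 2-s_0$ where $s_0+1$ is least. (c) An \emph{arithmetic grid} $[\,m_1\,]\times[\,m_2\,]\subset\Fp^2$ with $[\,m\,]:=\{0,\dots,m-1\}$, $m_1\sim p^{s_0}$ and $m_2\sim p^{(s_0+t_0)/2}$, together with the $\sim p^{t_0}$ lines $y=cx+d$, $0\le c<p^{(t_0-s_0)/2}$, $0\le d<p^{(s_0+t_0)/2}$; each such line meets the grid in $\sim p^{s_0}$ points and $\#E=m_1m_2\sim p^{(3s_0+t_0)/2}$, valid when $s_0\le t_0$ and $s_0+t_0\le 2$, which is precisely the range where $\tfrac32 s_0+\tfrac12 t_0$ is least — and where $m_2,p^{(t_0-s_0)/2}\le p$, so the grid and the slopes fit in $\Fp$. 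Thus these three realize $\bF(s_0,t_0;2,1)=\min\{s_0+t_0,\tfrac32 s_0+\tfrac12 t_0,s_0+1\}$ over $\Fp$. One also needs, for regimes with small $s$ or small $t$: the \emph{degenerate seed} consisting of $\lceil p^{t_0}\rceil$ distinct $k_0$-planes through a fixed $(k_0{-}1)$-plane $W\subset\Fp^{n_0}$ (there are $\sim p^{n_0-k_0}$ such planes, so this needs $t_0\le n_0-k_0$) with a single common $\lceil p^{s_0}\rceil$-point set $Y\subset W$ used for every plane, valid when $s_0\le k_0-1$ and giving $\#E=\lceil p^{s_0}\rceil\sim p^{s_0}$; a \emph{slab of hyperplanes} $B\times\Fp\subset\Fp^{k_0+1}$ with $B\subset\Fp^{k_0}$, $\#B=\lceil p^{s_0}\rceil$ (giving $\#E\sim p^{s_0+1}$ with $\sim p^{t_0}$ rich hyperplanes for $t_0\le k_0+1$); and the \emph{trivial seed} $E=\Fp^{n_0}$ with $\#E=p^{n_0}$.

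Finally, to conclude one matches Definition~\ref{deffurindex} term by term: each candidate exponent defining $\bF(s,t;n,k)$ equals $\bF(s_0,t_0;n_0,k_0)+(n-n_0)$ for one of the seeds, where $(s_0,t_0;n_0,k_0)$ is recovered from $(s,t;n,k)$ by undoing a specific sequence of $k-k_0$ product operations and $n-n_0-(k-k_0)$ graph lifts; the order of the operations is exactly what controls whether the $t$-variable gets shifted by $2(n-k-1)$ (all lifts first), by $(k+1)(n-k-1)$ (all products first), or by any integer in between (interleaved), and one checks that the admissibility constraints on $(s_0,t_0;n_0,k_0)$ for each seed translate precisely into the subdomain of $(s,t;n,k)$ on which that candidate is the active branch of $\bF$. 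The only genuinely non-formal ingredient is the arithmetic-grid seed, which is the Szemer\'edi--Trotter extremal point-line example transplanted to $\Fp$; all other seeds and the two operations are elementary counting and linear algebra over $\Fp$. Accordingly I expect the real work — and the main obstacle — to be purely organizational: assembling the finite catalogue of (seed, operation-sequence) pairs, checking it exhausts every branch of the evidently intricate formula in Definition~\ref{deffurindex} (all orderings of products versus lifts, all boundary cases of the parameter ranges, and the genuinely $k$-dimensional seeds rather than just the line seeds, needed when $t$ is close to $(k+1)(n-k)$), and verifying that no branch is left without a matching construction.
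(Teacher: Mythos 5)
Your overall strategy — a small catalogue of two-dimensional seeds (bush, slab, Szemer\'edi--Trotter grid, degenerate bushes) together with a couple of dimension-raising operations — is indeed the right spirit, and your \emph{product} and \emph{graph lift} operations are both correct and both used by the paper (the paper's passage from $E_2\subset\Fp^2$ to $E_3=E_2\times\Fp^d$ is your product with $a=d$, and the passage from $E_3$ to $E_1$ via $\cW_U$ is your graph lift with $b=m$). The seeds are also correct, matching Proposition~\ref{prop2d}. However, there is a genuine gap: your two operations cannot, by themselves, close the proof, because both of them multiply $\#E$ by a factor of $p$ for every unit increase in the ambient dimension. Starting from a seed in $\Fp^{n_0}$ of size $p^{c_0}$, a sequence of $n-n_0$ such operations always lands on a set of size $\sim p^{c_0+(n-n_0)}$. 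But the target exponent $\bF(s,t;n,k)$ is (with $s=d+\si$, $t=(k-d-1)(n-k)+(d+2)m+\tau$) equal to $d+m+\min\{\si+\tau,\tfrac32\si+\tfrac12\tau,\si+1\}$, and $d+m$ can be much smaller than $n-2$. Your claimed identity ``$\bF(s,t;n,k)=\bF(s_0,t_0;n_0,k_0)+(n-n_0)$'' therefore fails except in the boundary case $d=k-1$, $m=n-k-1$.

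The missing ingredient is what one might call the \emph{cone} (or Grassmannian blow-up) operation, which the paper uses in Case (c): given a $(s_0,t_0;n_0,k_0)$-set $E_0=\bigcup_{W\in\cW}Y(W)$ in $\Fp^{n_0}$, view $\Fp^{n_0}$ as a coordinate subspace of $\Fp^{n_1}$ with $n_1-k_1=n_0-k_0$ and, for each $W\in\cW$, replace $W$ by \emph{all} $k_1$-planes $V\subset\Fp^{n_1}$ with $V\cap\Fp^{n_0}=W$ (there are $\sim p^{(k_1-k_0)(n_1-k_1)}$ of them), with $Y(V):=Y(W)$. This yields a $\bigl(s_0,\,t_0+(k_1-k_0)(n_1-k_1);\,n_1,k_1\bigr)$-set whose underlying point set is \emph{unchanged}, so $\#E$ does not grow. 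Together with the equally costless trivial embedding $\Fp^{n_0}\hookrightarrow\Fp^{n_0'}$ (same planes, same $t$, same $\#E$, just a bigger ambient space), this is precisely what lets the paper absorb the $k-(d+1)$ extra plane-dimensions and the $n-(m+d+2)$ extra ambient dimensions without paying in cardinality. With cone and trivial embedding added to your toolbox, the paper's construction is exactly seed $\to$ product($a=d$) $\to$ graph lift($b=m$) $\to$ trivial embedding into $\Fp^{n-k+d+1}$ $\to$ cone to $k$-planes in $\Fp^n$, and the bookkeeping identity becomes $\bF(s,t;n,k)=\bF(\si,\tau;2,1)+d+m$, which is what Definition~\ref{deffurindex} actually says.

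To see concretely that your two operations alone cannot work, take $(s,t;n,k)=(0.5,\,5;\,5,3)$, so $d=0$, $\si=0.5$, $m=0$, $\tau=1$ and $\bF=0.5+\min\{1,0.75,1\}=1.25$. The paper's example has size $\lesssim p^{1.25}$: take the Szemer\'edi--Trotter grid $(0.5,1;2,1)$-set in $\Fp^2$ of size $\lesssim p^{1.25}$, embed $\Fp^2\subset\Fp^3\subset\Fp^5$, and for each of its $\sim p$ lines $W$ take the $\sim p^4$ many $3$-planes $V\subset\Fp^5$ with $V\cap\Fp^3=W$, so $t=1+4=5$ while $\#E\lesssim p^{1.25}$. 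Your operations cannot reach this: two products plus one graph lift from the $2$-dimensional grid give a $(2.5,\,\cdot\,;5,3)$-set of size $\sim p^{4.25}$ with the wrong $s$, while starting from a seed with $k_0=3$ (e.g.\ the degenerate seed in $\Fp^4$ with $\#E\sim p^{0.5}$) and one graph lift produces a $(0.5,5;5,3)$-set of size $\sim p^{1.5}$ — still strictly worse than $p^{1.25}$. The obstruction is structural: any sequence built only from product and graph lift pays a factor of $p$ per dimension and so cannot realize the ``free'' dimensions coming from the cone.
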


\begin{theorem}[Lower bound]\label{lower bound}
    Suppose $(s,t;n,k)$ is admissible.
    If we assume Conjecture \ref{conj2d} holds, then for any $(s,t;n,k)$-set $E$ over $\Fp$, we have
    \[ \#E\gtrsim_{\e} p^{\bF(s,t;n,k)-\e}, \]
    for any $\e>0$.
\end{theorem}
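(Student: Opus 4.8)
The plan is to prove Theorem~\ref{lower bound} by induction on the pair $(n,k)$, well-ordered so that the base case is $(n,k)=(2,1)$; there the definition will be arranged so that $\bF(s,t;2,1)=\min\{s+t,\tfrac32 s+\tfrac12 t,s+1\}$, and the desired inequality $\#E\gtrsim_\e p^{\bF(s,t;2,1)-\e}$ is exactly Conjecture~\ref{conj2d}. The inductive engine is a short list of geometric ``dimension‑reduction'' moves (and, in the general bookkeeping, mild variants of them): each takes a $(s,t;n,k)$-set $E=\bigcup_{V\in\cV}Y(V)$ in $\Fp^n$ and, after pigeonholing, produces a genuine $(s',t';n',k')$-set with $(n',k')$ of strictly smaller complexity, together with the trivial containment that bounds $\#E$ from below by the cardinality of the new set. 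The inductive hypothesis (or Conjecture~\ref{conj2d} at the leaf) then bounds $\#E$, and the index $\bF(s,t;n,k)$ of Definition~\ref{deffurindex} is, by design, the best exponent this scheme produces: one runs, for the given $E$, the reduction chain that realizes $\bF(s,t;n,k)$, and reads off the conclusion. One also includes the trivial base bounds, e.g.\ $\#E\ge\#Y(V)\ge\lam p^s$.

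I would use two elementary moves. \emph{Generic projection.} Fix a surjective linear $\pi\colon\Fp^n\to\Fp^m$ with $k<m<n$; when $k=1$ one may take $m=2$ and land directly on the base case. For $\pi$ generic, $\hat V\cap\ker\pi=0$ for a $1-o(1)$ proportion of $V\in\cV$, so $\pi$ is injective on each such $V$, the image $\pi(V)$ is a $k$-plane in $\Fp^m$, and $\#\pi(Y(V))=\#Y(V)\ge\lam p^s$; since the fibers of $V\mapsto\pi(V)$ have controlled size, discarding a bounded proportion and pigeonholing yields a $(s,t';m,k)$-set inside $\pi(E)$, with $t'\ge\max\{0,\,t-(k+1)(n-m)\}$ and $t'\le(k+1)(m-k)$. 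Then $\#E\ge\#\pi(E)$ invokes the hypothesis at $(m,k)$. \emph{Generic slicing.} Partition $\Fp^n$ into the $p$ translates of a generic hyperplane direction; a double count produces one translate $H\cong\Fp^{n-1}$ with $\sum_{V\in\cV}\#\big(Y(V)\cap H\big)\gtrsim p^{s+t-1}$, and $V\cap H$ is a $(k-1)$-plane in $H$ for generic direction. Pigeonholing the quantities $\#(Y(V)\cap H)$ to a common value $\approx p^{s'}$ and the surviving planes to a common count $\approx p^{t'}$ exhibits a $(s',t';n-1,k-1)$-set inside $E\cap H$, and $\#E\ge\#(E\cap H)$ invokes the hypothesis at $(n-1,k-1)$. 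Every admissible $(n,k)\neq(2,1)$ admits at least one of these moves — projection when $n>k+1$, slicing when $k\ge2$ — so the induction is well-founded; the $p^{-\e}$ losses accumulate over at most $n$ steps, which is harmless for fixed $n$.

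The main obstacle is the robustness of slicing. A generic affine slice of $Y(V)$ can be far smaller than $\#Y(V)/p$ — for instance if $Y(V)$ concentrates near a lower‑dimensional affine subspace of $V$ — and over $\Fp$ there is no dyadic pigeonholing available to regularize. I would deal with this by replacing $E$ at the outset by a refinement in which each $Y(V)$ has been reduced, via pigeonholing on its ``concentration level'', to a piece that is essentially uniform at the appropriate dimension; on the refinement the slice behaves generically and the bookkeeping $(s,t)\mapsto(s',t')$ closes. This is also why the recursion for $\bF$ must branch on the size of $s$: successive slices can only reach $s'\ge\max\{0,s-j\}$ after $j$ of them, and slicing ceases to help once $s<1$. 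The remaining, more bureaucratic point is to verify that the recursion defining $\bF$ in Definition~\ref{deffurindex} actually closes: that the best chain of reductions yields precisely the closed‑form $\bF(s,t;n,k)$ and, in particular, matches the construction of Theorem~\ref{upper bound}, so that the estimate is sharp. For Theorem~\ref{lower bound} itself only the validity of the moves and the definition of $\bF$ are needed; that these elementary reductions turn out to be \emph{lossless} — unlike in the Kakeya setting — is the phenomenon that lets the two‑dimensional input suffice in all dimensions.
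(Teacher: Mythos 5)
There is a genuine gap: both of your reduction moves bound $\#E$ from below only by the size of a \emph{single} lower-dimensional Furstenberg set ($\#E\ge\#\pi(E)$, resp.\ $\#E\ge\#(E\cap H)$), and such a bound is too lossy to reach $\bF(s,t;n,k)$. Concretely, take $(n,k)=(3,1)$, $s=1$, $t=4$: Definition~\ref{deffurindex} gives $\bF(1,4;3,1)=3$ (here $d=0,\si=1,m=1,\tau=2$), and indeed the theorem must produce $\#E\gtrapprox p^{3}$. For $(3,1)$ your slicing move is unavailable (it would produce $0$-planes), so the only move is projection to $\Fp^2$, which yields a $(1,t';2,1)$-set with $t'\le 2$ and hence only $\#E\ge\#\pi(E)\gtrapprox p^{\bF(1,2;2,1)}=p^{2}$, one full power of $p$ short. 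No chain of your moves repairs this, because after the first projection all information transverse to $\Fp^2$ is discarded. The same loss occurs in your slicing move: restricting to one good hyperplane slice throws away the contribution of the other $\sim p$ slices, whereas the correct bound must \emph{sum} over slices.

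What the paper actually does is structurally different at exactly this point. In Propositions~\ref{proplower1} and~\ref{proplower2} the set $E$ is not replaced by a projection or a single slice; instead a quasi-product structure is extracted (Lemmas~\ref{lemuv} and~\ref{lems1s2}, with the nontrivial exhaustion argument guaranteeing $p^{u}\gtrsim p^{s_1}$, resp.\ $p^{s_1}\gtrsim p^{s}$), and the resulting lower bounds are \emph{multiplicative}: $\#E\gtrsim p^{u+\bF(s_2,t_1+v;k,k-1)-\e}$ in the hyperplane case (obtained by summing the slice estimates $\#E'_\xi$ over $\xi$), and $\#E\gtrsim p^{\bF(s_1,t_1;n-1,k)+s_2-\e}$ in the general case, where the auxiliary exponents are tied together by two-dimensional or $(k+1,k)$-dimensional Furstenberg estimates ($u+v\ge\bF(s_1,t_2;2,1)$, $s_1+s_2\ge\bF(s,t_2;k+1,k)$). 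The fiber factor $p^{s_2}$ (or $p^{u}$) multiplying the base estimate is precisely what your scheme discards, and it is what recovers the missing power of $p$ in the example above. Finally, the step you describe as bureaucratic — that ``the recursion closes'' — is in the paper the content of Lemmas~\ref{recursionF1} and~\ref{recursionF2}, a substantial case analysis against the explicit formulas of Definition~\ref{deffurindex}; since your recursion is a different (and, as shown, insufficient) one, this part cannot be fixed by bookkeeping alone, and the claim that $\bF$ is ``by design the best exponent your scheme produces'' is not consistent with how $\bF$ is defined in the paper.
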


\bigskip

\subsubsection{Exceptional set estimate}\hfill

We also study the exceptional set estimate in prime fields in all dimensions. We show that the two-dimensional result for the Furstenberg set problem implies all higher-dimensional results for exceptional set estimates.

To define the projection in $\Fp^n$, we first consider an equivalent definition of the projection in $\R^n$.

Recall that $\pi_V:\R^n\rightarrow V$ is the orthogonal projection. For $V\in G(k,\R^n)$, we also define the map
\[ \pi_V^*:\R^n\rightarrow A(k,\R^n) \]
as $\pi^*_V(x):=x+V$. We see that $\pi_V^*(x)$ is the unique $k$-plane in $A(k,\R^n)$ that is parallel to $V$ and passes through $x$. It is not hard to see that $\pi_V$ and $\pi^*_{V^\perp}$ are the same things. This motivates the definition of projections in $\Fp^n$. We will still use the notations $E_s(A;n,k), \pi_V^*$ for $\Fp^n$ (which has been used for $\R^n$ in the previous subsection). This will not cause confusion.

\begin{definition}[Exceptional set]
    For $V\in G(m,\Fp^n)$, define
    \[ \pi_V^*:\Fp^n\rightarrow A(m,\Fp^n) \]
    as $\pi_V^*(x):=x+V$.

    Suppose $A\subset \Fp^n$. Let $s>0$. We define
    \[ E_s(A;n,k):=\{ V\in G(n-k,\Fp^n): \#\pi_V^*(A)< p^s \}. \]
(We remark that $E_s(A;n,k)$ also depends on $p$, but for simplicity, we drop $p$ from the notation.)
\end{definition}

\bigskip

We explicitly find the function $\bM(a,s;n,k)$, whose definition is tricky and therefore postponed to Definition \ref{defM}. We state the following results.

\begin{theorem}[Lower bound]\label{exceptionallowerthm}
    Fix $1\le k<n$, $a\in (0,n]$ and $0<s$. Then for $p$ large enough, 
there exists $A\subset \Fp^n$ with $\#A\gtrsim  p^a$, such that
\begin{equation}
    \#E_{s}(A;n,k)\gtrsim  p^{\bM(a,s;n,k)}.
\end{equation}

\end{theorem}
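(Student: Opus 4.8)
The plan is to produce the extremal example for the exceptional set estimate by dualizing the extremal example for the Furstenberg set problem, i.e. by invoking Theorem \ref{upper bound}. The basic principle is the duality between $(s,t;n,k)$-sets and sets with large exceptional sets of projections: a set $A$ whose projections $\pi_V^*(A)$ are small for many $V \in G(n-k,\Fp^n)$ is, from the dual point of view, a union of slices of $A$ along cosets of the planes $V$, and the cosets hit by $A$ form a Furstenberg-type configuration. Concretely, if $\#\pi_V^*(A) < p^s$ then $A$ is covered by fewer than $p^s$ cosets of $V$; grouping the points of $A$ by which coset they lie in, and summing over the $\gtrsim p^{\bM(a,s;n,k)}$ exceptional planes $V$, one expects to read off a $(s',t';n,k')$-set structure (with $k' = n-k$, or with parameters obtained by an affine relabelling) whose cardinality is controlled by $\#A$. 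So the strategy is: (1) fix the parameters $a,s,n,k$; (2) choose admissible Furstenberg parameters $(s^*,t^*;n,n-k)$ — depending on $a,s,n,k$ in the way dictated by Definition \ref{defM} — so that the extremal $(s^*,t^*;n,n-k)$-set $E$ from Theorem \ref{upper bound} has $\#E \lesssim p^{\bF(s^*,t^*;n,n-k)} \lesssim p^a$; (3) set $A := E$, and (4) verify that, for each of the $\gtrsim p^{t^*}$ planes $V$ used to build $E$ (translated appropriately), one has $\#\pi_V^*(A) \lessapprox p^{s^*}$-or-smaller, so that $V$ (or rather its parallel class in $G(n-k,\Fp^n)$) lies in $E_s(A;n,k)$, giving $\#E_s(A;n,k) \gtrsim p^{t^*} = p^{\bM(a,s;n,k)}$.

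In carrying this out I would first record the exact dictionary between the two problems. A $(s^*,t^*;n,m)$-set $E = \bigcup_{V\in\cV} Y(V)$ has $\#\cV \ge \lam p^{t^*}$ planes $V \in A(m,\Fp^n)$, each carrying $\ge \lam p^{s^*}$ points. For $V \in \cV$, write $\bar V \in G(m,\Fp^n)$ for its direction; since $Y(V) \subset V$ lies in a single coset of $\bar V$, we get $\pi^*_{\bar V}(E)$ contains that coset, but we need the \emph{reverse} inequality $\#\pi^*_{\bar V}(E) < p^s$, which requires that $E$ not be spread over too many cosets of $\bar V$. This is where the construction in Theorem \ref{upper bound} must be used as a black box with a little extra bookkeeping: the extremal example there is an explicit algebraic/combinatorial object (a product or a "grid" configuration), and one should check that it is efficiently fibered — that the whole set $E$ meets only $\lessapprox p^{s}$ cosets of each direction $\bar V$ that appears. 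With $m = n-k$ this places $\bar V$ in $G(n-k,\Fp^n)$, exactly the index set of $E_s(A;n,k)$. Counting the number of distinct directions $\bar V$ arising from $\cV$ — which for the extremal example should be $\gtrsim p^{t^*}$, up to the harmless $(\log p)^C$ losses — then yields the lower bound $\#E_s(A;n,k) \gtrsim p^{t^*}$, and one defines $\bM(a,s;n,k)$ to be the supremum of such $t^*$ over admissible choices, which by construction matches Definition \ref{defM}.

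The parameter optimization is the second routine-but-delicate ingredient: one must choose $(s^*,t^*)$ to maximize $t^*$ subject to $\bF(s^*,t^*;n,n-k) \le a$ and to the constraint that the fibering bound gives $\lessapprox p^{s}$ cosets (which ties $s^*$, or a linear combination of $s^*$ and the ambient dimensions, to $s$). Since Theorem \ref{upper bound} gives the value $\bF$ on the nose and $\bF$ is the explicit piecewise-linear function of Definition \ref{deffurindex}, this becomes a finite linear-programming computation, and $\bM$ is then \emph{defined} to be its value — so the content of the theorem is precisely that the construction achieves it, not that we independently know the optimizer.

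The main obstacle I anticipate is step (4): confirming that the extremal Furstenberg example of Theorem \ref{upper bound} is "not spread out" in the transverse directions, i.e. that $\#\pi^*_{\bar V}(A)$ is as small as claimed for \emph{all} the relevant $\bar V$ simultaneously, rather than just generically. For product-type or grid-type constructions this should follow from the explicit form of the example, but the cosets of the planes $V \in \cV$ can have different directions as $V$ ranges over $\cV$, and controlling $\#\pi^*_{\bar V}(A)$ uniformly over this family — while keeping the count of distinct directions at $\gtrsim p^{t^*}$ — is the crux. A secondary subtlety is the passage between $G(m,\Fp^n)$ and $A(m,\Fp^n)$ and making sure the affine-vs-linear bookkeeping (the $(k+1)(n-k)$ vs $k(n-k)$ dimension counts) is consistent with the definition of $\bM$; this is bookkeeping, not a genuine difficulty, but it is where sign errors would creep in.
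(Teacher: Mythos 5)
Your strategy is genuinely different from the paper's, and as it stands it has gaps that are not just bookkeeping. The paper does not dualize the extremal Furstenberg example of Theorem \ref{upper bound}; it constructs $A$ directly and verifies the projection bounds by hand: for Type 2 it takes the slab $A=\Fp^m\times I$ with $\#I=p^\beta$ and counts the exceptional directions via Lemma \ref{easylem2} ($\#\{V:\#\pi_V^*(\Fp^m)\le p^l\}\sim p^{k(n-k)-(k-l)(m-l)}$); for Type 3 it takes the product $A=\bA_{\ga,\beta}\times\Fp^m$ of the two–dimensional rectangle example of Proposition \ref{oberlin2d} with a coordinate subspace, and for each exceptional direction $\theta$ of the planar set it produces a family $\cU_\theta\subset G(n-k,\Fp^n)$ of exceptional subspaces, proving the $\cU_\theta$ are pairwise disjoint so the counts add; the remaining ranges of $(\beta,\ga)$ are reduced to these two cases by rewriting the canonical expressions. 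Your step (4) is precisely this content, and you have not supplied it: the fact that each $Y(V)$ sits in a single coset of $\bar V$ says nothing about how the rest of $A$ spreads over cosets of $\bar V$, so membership of $\bar V$ in $E_s(A;n,k)$ does not follow from the Furstenberg structure, and for the actual extremal sets of Theorem \ref{upper bound} it must be checked direction by direction (the paper's product structure and Lemma \ref{easylem2} are exactly the tools that make such a check possible).

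Two further points would break the argument even if step (4) were granted. First, the count of distinct directions arising from $\cV$ is in general far below $p^{t^*}$: in the construction of Theorem \ref{upper bound} the planes come in large translated families sharing a direction (and $t^*$ may exceed $\dim G(n-k,\Fp^n)=k(n-k)$), so the claim $\#\{\bar V\}\gtrsim p^{t^*}$ is false as stated, and you would at best get the number of directions, which must then be matched against Definition \ref{defM}. Second, you cannot ``define'' $\bM(a,s;n,k)$ as the value of your linear program: $\bM$ is the explicit piecewise formula of Definition \ref{defM}, the same formula used in the matching upper bound (Theorem \ref{exceptionalthm}), so the theorem requires verifying that your optimized construction attains exactly $k(n-k)-(m-l)(k-l)+\max\{2\ga-(\beta+1),0\}$, resp.\ $k(n-k)-(m+1-l)(k-l)+\max\{2\ga-\beta,0\}$, in Types 2 and 3. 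That case analysis is the bulk of the paper's Section \ref{sec6} and is absent from your proposal.
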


\begin{theorem}[Upper bound]\label{exceptionalthm}
    Assume Conjecture \ref{conj2d} holds. Fix $1\le k<n$, $a\in (0,n]$ and $0<s$. 
    If $p$ is large enough, then for any $A\subset \Fp^n$ with $\#A\gtrsim p^a$, we have
\begin{equation}
    \#E_{s-\e}(A;n,k)\lesssim_{\e} p^{\e+\bM(a,s;n,k)},
\end{equation}
for any $\e>0$.
\end{theorem}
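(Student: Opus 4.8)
The plan is to deduce the estimate from the Furstenberg lower bound, Theorem~\ref{lower bound}, by a point--plane duality. Fix $A\subset\Fp^n$ with $\#A\gtrsim p^a$, and set $p^{b}:=\#E_{s-\e}(A;n,k)$ (up to the logarithmic factors hidden in $\approx$, which we suppress throughout). For every $V$ in the exceptional set one has $\#\pi_V^*(A)<p^{s-\e}$, i.e.\ $A$ lies in a union of fewer than $p^{s-\e}$ affine $(n-k)$-planes parallel to $V$. First I would run a routine dyadic pigeonholing to pass to a subset $A'\subseteq A$ with $\#A'\approx\#A$, a family $\cV$ of exceptional directions with $\#\cV\approx p^{b}$, and, for each $V\in\cV$, a family $\cW(V)$ of $\approx p^{s-\e}$ cosets of $V$ each meeting $A'$ in $\approx p^{a-s+\e}$ points.

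Next I would dualize, using the point--plane duality of $\Fp^n$ (equivalently, projective duality in $\Fp^{n+1}$): each $x\in A'$ becomes an affine plane $x^\vee$, each coset $W\in\cW(V)$ becomes a point $W^\vee$, each direction $V\in\cV$ becomes a plane $V^\vee$ passing through all the $W^\vee$ with $W\in\cW(V)$, and the incidences $x\in W$ become incidences $W^\vee\in x^\vee$. Then
\[ F:=\bigcup_{V\in\cV}\{\,W^\vee:\ W\in\cW(V)\,\} \]
is a Furstenberg set for suitable admissible parameters $(\hat s,\hat t;\hat n,\hat k)$: its base family $\{V^\vee:V\in\cV\}$ has size $\approx p^{b}$, so $\hat t$ equals $b$ up to the usual $\e$-loss, and its fibres have size $\approx p^{s-\e}$, so $\hat s$ is essentially $s$; the ambient dimensions $\hat n,\hat k$ are whatever the duality forces in terms of $n,k$. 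Theorem~\ref{lower bound}---the only place Conjecture~\ref{conj2d} is invoked---then gives $\#F\gtrsim_\e p^{\bF(\hat s,\hat t;\hat n,\hat k)-\e}$. On the other hand, every $W^\vee\in F$ lies on $\approx p^{a-s+\e}$ of the $\approx p^{a}$ planes $\{x^\vee:x\in A'\}$, and pushing this incidence datum through (this is the step that uses $\#A\approx p^{a}$) bounds $\#F$ from above by an explicit function of $a$ and $s$. Comparing the two bounds produces an inequality constraining $b$ in terms of $a$ and $s$.

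The content of Definition~\ref{defM} is exactly that $\bM(a,s;n,k)$ is the supremum of the values of $b$ allowed by this inequality, once one also optimizes over the free parameters produced by the pigeonholing (the exponents such as $a-s+\e$ are only pinned down to lie in certain ranges). This yields $b\le\bM(a,s-\e;n,k)+C\e$, and since $\bF$ and $\bM$ are piecewise linear, hence Lipschitz, in their arguments, $\bM(a,s-\e;n,k)\le\bM(a,s;n,k)+C'\e$; absorbing the suppressed logarithmic factors into a further $p^{\e}$ and renaming $\e$ gives $\#E_{s-\e}(A;n,k)\lesssim_\e p^{\e+\bM(a,s;n,k)}$. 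I expect the main difficulty to be the duality step itself: pinning down which Grassmannian the dual planes inhabit, hence which index $\bF(\hat s,\hat t;\hat n,\hat k)$ and which upper bound on $\#F$ actually appear, and then checking that the resulting constrained optimization reproduces, piece by piece, the formula for $\bM$ in Definition~\ref{defM}. The same duality run in reverse, starting from an extremal Furstenberg set furnished by Theorem~\ref{upper bound}, is what proves the companion lower bound Theorem~\ref{exceptionallowerthm}.
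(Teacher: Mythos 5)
Your proposal takes a genuinely different route from the paper, and it has a gap that I do not see how to close as written.

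The paper does not use point--plane duality at all. It argues by double induction: Proposition \ref{exprop1} handles the base case $(n,k)=(k+1,k)$, and Proposition \ref{expropupper2} then reduces $(n,k)$ to $(n-1,k)$ together with $(k+1,k)$ by slicing $\Fp^n$ along the last coordinate. The base case is carried out by a direct rich-line argument (Lemma \ref{refine}): for a threshold $p^{\si}$ one looks at the set $\cV$ of lines $L\subset\Fp^{k+1}$ with $\#(L\cap A)\ge p^{\si}$, observes that $\bigcup_{L\in\cV}(L\cap A)\subset A$ is a $(\si,t;k+1,1)$-Furstenberg set, and uses the lower bound $\bF(\si,t;k+1,1)\le a+\eta_0$ to cap $\#\cV$. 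Proposition \ref{exprop1} then shows that an exceptional direction $V$ must either own many rich lines (ruled out by this cap, for most $V$) or else project $A$ to $\gtrsim p^{s-\e}$ cosets. Nowhere is the configuration dualized; the Furstenberg estimate is applied to lines inside the \emph{original} space, with the rich portion of $A$ itself playing the role of the Furstenberg set.

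The gap in your sketch is the step you yourself flag as ``the main difficulty'': pinning down the dual Grassmannian and extracting a usable inequality. Two concrete problems arise. First, for $k>1$ the cosets $W\in\cW(V)$ are affine $(n-k)$-planes, which under projective duality become $(k-1)$-planes, not points; so $F$ is not a point set and the claimed Furstenberg structure does not immediately make sense. Second, even in the $k=1$ case where $W^\vee$ is a point, the cosets of distinct exceptional directions are automatically distinct, so the pigeonholed count gives $\#F\approx p^{b'+s'}$ on the nose; the comparison $\bF(\hat s,\hat t;\hat n,\hat k)\le b'+s'$ is then the trivial upper bound that every Furstenberg index satisfies, and no constraint on $b'$ in terms of $a$ falls out. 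A nontrivial upper bound on $\#F$ would have to come from some additional clustering of the dual points forced by $\#A\approx p^a$, but the proposal does not say what that is, and a naive incidence count between $F$ and the dual hyperplanes $\{x^\vee\}$ is likewise saturated. Absent that, the argument does not produce $\bM(a,s;n,k)$, and in particular there is no mechanism in the sketch that recovers the induction on $n$ implicit in Definition \ref{defM} (the factors $(m-l)(k-l)$ etc.\ change between $(n-1,k)$ and $(n,k)$), which the paper handles explicitly via Lemma \ref{leminducM} and the arithmetic in Lemma \ref{RecursionM}.
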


\bigskip

\subsection{Strategy of the proof}
The proof of Theorem \ref{upper bound} and Theorem \ref{exceptionallowerthm} is by constructing examples. The proof of Theorem \ref{lower bound} and Theorem \ref{exceptionalthm} is based on a sequence of pigeonhole arguments and induction on both $n$ and $k$. We explain the idea of the proof of Theorem \ref{lower bound}.

We consider the case $(n,k)=(3,1)$. Suppose that for any $(s',t';2,1)$-set $E'$, we have $\#E'\gtrsim p^{\bF(s',t';2,1)}$. The goal is to prove for any $(s,t;3,1)$-set $E$, $\#E\gtrsim p^{\bF(s,t;3,1)}$. We will carefully analyze the structure of $E$ and find subsets of $E$ which are $(s',t';2,1)$-set. Then we use the estimate for those $(s',t';2,1)$-sets. 

Write $E=\bigcup_{L\in\cL}Y(L)$, where $\cL\subset A(1,\Fp^3)$ with $\#\cL=p^t$ and $Y(L)\subset L$ with $\#Y(L)=p^s$. Project $\cL$ to $\Fp^2$, and partition $\cL$ according to their images. For each $W\in A(1,\Fp^2)$, define $\cL_W$ to be the set of those $L\in\cL$ whose image under the projection $\Fp^3\rightarrow \Fp^2$ is $W$. We may assume that all lines in $\cL$ are not parallel to $(0,0,1)$ (by throwing away a small portion of $\cL$). We obtain the partition
\[ \cL=\bigsqcup_{W\in A(1,\Fp^2)}\cL_W. \]
By pigeonholing, we can find $\cW\subset A(1,\Fp^2)$ so that $\#\cL_W$ are comparable for all $W\in\cW$. Moreover, denoting $\#\cW=p^{t_1}$, $\#\cL_W=p^{t_2}$, we may assume $t_1+t_2=t$.

Next, we look at 
\[ E_W:=\bigcup_{L\in\cL_W}Y(L). \]
This is a $(s,t_2;2,1)$-set in $W\times \Fp\cong \Fp^2$. By hypothesis, we have
\[ \#E_W\gtrsim p^{\bF(s,t_2;2,1)}. \]

We write 
\[ E_W=\bigsqcup_{\xi\in W} E_W\cap (\xi\times \Fp). \]
By pigeonholing on $\#E_W\cap (\xi\times \Fp)$, we may assume that there is a subset $\Xi_W\subset W$ such that $\#E_W\cap (\xi\times \Fp)$ are comparable for $\xi\in\Xi_W$. Moreover, denoting $\#\Xi=p^{s_1}$, $\#E_W\cap (\xi\times \Fp)=p^{s_2}$, we may assume
\[ s_1+s_2\ge \bF(s,t_2;2,1). \]
Also, we may assume $s_1,s_2$ are the same for all $W\in\cW$ by doing another pigeonholing on $W$. We can also assume a strong condition on $s_1$: $s_1\ge s$. However, we do not expand the discussion here.

It turns out that the worst scenario for $E_W$ is when 
\[ E_W=\Xi_W\times\{1,\dots,p^{s_2}\}. \]
This will be made precise in the latter sections, so we do not expand the discussion here.

Now, we have
\[ E\supset (\bigcup_{W\in\cW}\Xi_W)\times \{1,\dots,p^{s_2}\}. \]
Note that $\bigcup_{W\in\cW}\Xi_W$ is a $(s_1,t_1;2,1)$-set. Hence,
\[ \#E\gtrsim p^{\bF(s_1,t_1;2,1)+s_2}. \]

The problem boils down to the following linear programming problem:
suppose that
\begin{equation}
    \begin{cases}
    t_1+t_2=t,\\
        s_1+s_2\ge \bF(s,t_2;2,1),\\
        s_1\ge s.
    \end{cases}
\end{equation}
Show that
\[ \bF(s_1,t_1;2,1)+s_2\ge \bF(s,t;3,1). \]

\bigskip

\subsection{Structure of the paper}
In Section \ref{sec2}, we define $\bF(s,t;n,k), \bM(a,s;n,k)$ and prove some fundamental lemmas. In Section \ref{sec3}, we prove Theorem \ref{upper bound}. In Sections \ref{sec4} and \ref{sec5}, we prove Theorem \ref{lower bound}. In Section \ref{sec6}, we prove Theorem \ref{exceptionallowerthm}. In Sections \ref{sec7} and \ref{sec8}, we prove Theorem \ref{exceptionalthm}.

\newpage

\section{Preliminary}\label{sec2}

\subsection{Definition of \texorpdfstring{$\bF(s,t;n,k)$ and $ \bM(a,s;n,k)$}{}} Now, we give the definitions of $\bF$ and $\bM$ that appeared in Theorem \ref{upper bound}, \ref{lower bound}, \ref{exceptionallowerthm}, \ref{exceptionalthm}.

\begin{definition}[Furstenberg index]\label{deffurindex}
    Suppose $(s,t;n,k)$ is admissible, we define $\bF(s,t;n,k)$ as follows.
    
    (a) If $s=0$, define
    \begin{equation}
        \bF(0,t;n,k)= \max\{0,t-k(n-k)\}.
    \end{equation}

    (b) If $s>0$, write $s=d+\si$ where $d\in\N, \si\in(0,1]$. If also $t\in [0,(k-d-1)(n-k)]$, define
    \begin{equation}
        \bF(s,t;n,k)= s.
    \end{equation}

    If we are not in Case (a) and (b), then $s\in (0,k]$ and $t\in ((k-d-1)(n-k),(k+1)(n-k)]$,
    we write 
\begin{equation}\label{expression}
    \begin{cases}
        s=d+\si\\
        t=(k-d-1)(n-k)+(d+2)m+\tau,
    \end{cases}
\end{equation} 
    where $d\in\{0,\dots,k-1\}$, $\si\in (0,1]$,  $m\in\{0,\dots,n-k-1\}$, $\tau\in (0,d+2]$. 

(c) If $\tau\in(0,2]$, define
\begin{equation}
\begin{split}
    \bF(s,t;n,k)&=s+m+\min\{\tau,\frac12(\si+\tau),1\}\\
    &= d+m+\min\{\si+\tau,\frac32\si+\frac12\tau,\si+1\}.
\end{split}
\end{equation}

(d) If $\tau\in (2,d+2]$, define
\begin{equation}
\begin{split}
    \bF(s,t;n,k)&=s+m+\min\{\tau,\frac12(\si+\tau),1\}\\
    &= s+m+1.
\end{split}
\end{equation}

\end{definition}

\begin{remark}
    {\rm In (c) and (d), $\bF$ has the same expression, but we separate them for the convenience of later discussions.
    
    Through this definition, we can calculate $\bF(s,t;2,1)=\min\{s+t,\frac32s+\frac12t,s+1\}$ for $s\in (0,1], t\in [0,2]$. We see that Conjecture \ref{conj2d} is equivalent to saying that $\inf_{E:\ (s,t;2,1)\textup{-set}} \#E\gtrsim_\e p^{\bF(s,t;2,1)-\e}$. }
\end{remark}

The following is a construction of a $(s,t;2,1)$-set, which provides evidence on why Conjecture \ref{conj2d} should be reasonable.

\begin{proposition}\label{prop2d}
For $0\le t\le 2$, there exists a $(0,t;2,1)$-set $E$ such that
    \begin{equation}
        \#E\lesssim p^{\max\{0,t-1\}}.
    \end{equation}
For $0<s\le 1, 0\le t\le 2$.\, there exists a $(s,t;2,1)$-set $E$ such that    
    \begin{equation}\label{f21}
        \#E\lesssim p^{\min\{s+t,\frac32s+\frac12t,s+1\}}.
    \end{equation}
\end{proposition}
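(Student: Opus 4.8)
## Proof Proposal

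The plan is to construct the two required $(s,t;2,1)$-sets explicitly over $\Fp$, mimicking the sharp examples known for the Euclidean Furstenberg problem. In both cases we parametrize lines in $A(1,\Fp^2)$ by $(a,b)\mapsto \ell_{a,b}=\{(x,ax+b):x\in\Fp\}$ (after discarding vertical lines, which costs only a constant factor), so a set of lines of size $\approx p^t$ corresponds to a set $\cA\subset \Fp^2$ of slope-intercept pairs with $\#\cA\approx p^t$, and $Y(\ell_{a,b})$ corresponds to a subset $X_{a,b}\subset\Fp$ of allowed $x$-coordinates with $\#X_{a,b}\approx p^s$. The union $E$ then has size $\#\{(x,ax+b):(a,b)\in\cA,\ x\in X_{a,b}\}$, and the whole problem is a counting estimate on this incidence-type set.

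For the first statement ($s=0$, $t\le 2$): when $t\le 1$ take a single point for each line, choosing $\approx p^t$ lines all through a common point, giving $\#E\lesssim 1 = p^{\max\{0,t-1\}}$; when $1\le t\le 2$, take $X_{a,b}=\{0\}$ but now use $\approx p^t$ pairs $(a,b)$ arranged so that the lines $\ell_{a,b}$ pass through few points: e.g. fix $b$ to range over all of $\Fp$ and $a$ over a set of size $\approx p^{t-1}$, so the union of the $\ell_{a,b}\cap\{x\in X\}$ is controlled by a grid. The cleanest route is to take $X_{a,b}$ to be an arithmetic-progression-like set and exploit that $\{(x,ax+b)\}$ with $x$ ranging over a subgroup-like structure fills out a structured set; I would take $X$ of size $\approx p^{1/2}$ only in the second statement, and for $s=0$ just put the base point set equal to a fixed small set. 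The count $\#E\lesssim p^{t-1}$ then follows from: the $x$-coordinate takes $1$ value (from $X=\{0\}$), so actually $E$ lies in $\{x=0\}$, and the number of distinct $(0,b)$ is at most $\#\{b:(a,b)\in\cA\text{ some }a\}$; one arranges $\cA$ so this is $\lesssim p^{t-1}$ by taking $b$ to range over a set of size $p^{t-1}$ and $a$ over all of $\Fp$ — that gives $t=1+(t-1)$ and $\#E\le p^{t-1}$, as desired. (For $t\le 1$ one instead takes all $p^t$ lines concurrent.)

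For the second statement ($0<s\le 1$, $0\le t\le 2$): I would split according to which term realizes the minimum, and in each regime produce a product-like or algebraically-structured example. The model case is $s=1$: take $\cA = H\times H$ where $H\subset\Fp$ is an interval (or a Sidon-type set) of size $\approx p^{t/2}$ and $X_{a,b}=\Fp$ for all; then $\#E=\#\{(x,ax+b):a,b\in H, x\in\Fp\}\lesssim p\cdot\#H = p^{1+t/2} = p^{3s/2+t/2}$ when $t\le 2$, and $\lesssim p^2 = p^{s+1}$ always, matching \eqref{f21} for $s=1$. For general $s\in(0,1]$ one takes $X_{a,b}=X$ a common set of size $\approx p^s$ and $\cA$ again a product $H\times H'$ or a graph-type set, and estimates $\#E$ by summing over $x\in X$ the number of pairs $(a,b)\in\cA$ with distinct $(ax+b)$; the three terms $s+t$, $\tfrac32 s+\tfrac12 t$, $s+1$ correspond respectively to: the trivial bound $\#\cA\cdot 1$ re-weighted, a Cauchy–Schwarz/incidence balance, and the ambient bound $\#(\text{vertical strip over }X)=p\cdot p^s$. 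The factor $\frac12$ in the allowed density $\lambda=\frac12$ is just to absorb the rounding of $p^s$ and $p^t$ to integers and the removal of vertical lines.

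The main obstacle will be the middle regime, i.e. producing a single clean example whose count is $\lesssim p^{3s/2+t/2}$ when this is the minimum — this is the genuinely "Furstenberg" bound, not a trivial projection bound, and it is exactly where in $\R^2$ one uses the Ren–Wang-type extremal configuration. Over $\Fp$ the natural candidate is a "bush-free, grain-structured" example: choose $X$ to be a subinterval of $\Fp$ of length $p^\sigma$ (writing $s=\sigma$ since $s\le1$) and $\cA$ to be a box of dimensions $p^{t/2}\times p^{t/2}$ in $(a,b)$-space, rescaled so that the images $\{ax+b : x\in X\}$ overlap heavily; the count then reduces to estimating, for fixed $x$, the size of $\{ax+b:(a,b)\in\cA\}$ and summing with multiplicity, and one must check the arithmetic that the sum is $\approx p^{\sigma}\cdot p^{t/2}\cdot p^{(\text{something})/2}$ collapses to the claimed exponent. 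I expect this to be a short but slightly delicate computation with intervals/APs in $\Fp$, and I would first verify the endpoint consistency ($s\to 1$, $t\to 0$, $t\to 2$) to pin down the correct shape of $\cA$ and $X$, then carry out the general count. The remaining two regimes and the $s=0$ case are then routine.
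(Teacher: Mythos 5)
Your overall strategy — explicit examples in each of the three regimes, with the middle regime ($s\le t\le 2-s$, minimum at $\tfrac32 s+\tfrac12 t$) being a box/Szemer\'edi--Trotter-type configuration — is the same as the paper's. The $s=0$ cases and the two ``trivial'' regimes (bounds $p^{s+t}$ from $\sum_L\#Y(L)$ and $p^{s+1}$ from confining $E$ to a strip of size $p^s\times p$) are correctly handled. But the middle regime, which you yourself flag as the delicate one, is not actually done, and the tentative parameters you give are wrong. You propose taking $\cA$ a box of dimensions $p^{t/2}\times p^{t/2}$ in $(a,b)$-space and $X$ of size $p^s$. With $|a|\le p^{t/2}$, $|b|\le p^{t/2}$, $|x|\le p^s$, the quantity $ax+b$ ranges over a window of size $\sim p^{t/2+s}$, so this only gives $\#E\lesssim p^s\cdot p^{t/2+s}=p^{2s+t/2}$, which strictly exceeds the target $p^{3s/2+t/2}$ whenever $s>0$. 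No ``rescaling'' of the square box fixes this; the box must be anisotropic in a specific way.

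The correct choice (as in the paper) is $a\in\{1,\dots,p^{(t-s)/2}\}$, $b\in\{1,\dots,p^{(s+t)/2}\}$, and $Y(L)=\{(x,ax+b):x=1,\dots,\tfrac12 p^s\}$. Then $\#\cL=p^{(t-s)/2}\cdot p^{(s+t)/2}=p^t$ (valid since $s\le t\le 2-s$ makes both exponents lie in $[0,1]$), and for any such $a,b,x$ one has $1\le ax+b\le p^{(t-s)/2}\cdot\tfrac12 p^s+p^{(s+t)/2}\le 2p^{(s+t)/2}$, so $E\subset\{1,\dots,\tfrac12p^s\}\times\{1,\dots,2p^{(s+t)/2}\}$, giving $\#E\lesssim p^{s}\cdot p^{(s+t)/2}=p^{\frac32 s+\frac12 t}$. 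Without identifying this anisotropic scaling, the construction does not close, so as written there is a genuine gap exactly at the step you anticipated would be ``slightly delicate.''
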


\begin{proof}
When $s=0$, we construct $E=\bigcup_{L\in \cL} Y(L)$ as follows. If $t\le 1$, choose $\cL$ to be a set of $p^t$ lines passing through $(0,0)$ and $Y(L)=(0,0)$. Then $\#E=1=p^0$. If $t>1$, we choose $p^{t-1}$ points $E$ on $\Fp\times \{0\}$, and for each $x\in E$, we choose $p-1$ lines through $x$ that are not parallel to $\Fp\times \{0\}$. If the line $L$ is through $x\in E$, then we let $Y(L)=x$. We see that $E$ is a $(0,t;2,1)$-set with cardinality $p^{t-1}$. (Here, $p^{t-1}$ should actually be the integer $\lceil p^{t-1}\rceil$, but we still write it as $p^{t-1}$ for simplicity, thinking of it as an integer. Same convention applies in the whole paper.) 

Consider the case when $s>0$. There are three terms in the ``min" on the RHS of \eqref{f21}. We note that: when $t\le s$, minimum achieves at $s+t$; when $s\le t\le 2-s$, minimum achieves at $\frac32s+\frac12t$; when $2-s\le t$, minimum achieves at $s+1$.
\begin{enumerate}
    \item If $E=\bigcup_{L\in\cL} Y(L)$ is a $(s,t;2,1)$-set, then $\#E\le \sum_{L\in\cL} \#Y(L)\le p^{s+t}$.

    \item Choose $\cL$ to be a set of $\frac12 p^t$ lines that are not parallel to the line $\Fp\times\{0\}$. Let $Y(L)=L\cap (\Fp\times \{1,2,\dots,p^s\})$. Then $E$ is a $(s,t;2,1)$-set with $E=\bigcup_{L\in\cL} Y(L)\subset \Fp\times \{1,\dots,p^s\}$. Therefore, $\#E\le p^{s+1}$.

    \item It remains to consider the case when $s+t\le 2, s\le t$. This example is of Szemer\'edi-Trotter type. The heuristic is shown in Figure \ref{F0}, where we have $\sim p^{\frac{t-s}{2}}$ directions and for each direction, there are $\sim p^{\frac{t+s}{2}}$ lines. We also have a rectangle of size $\sim p^{s}\times p^{\frac{s+t}{2}}$. The rectangle intersects each line $L$ at $\sim p^s$ points, which is defined to be $Y(L)$. We also see $E=\bigcup_L Y(L)$ is contained in the rectangle. Therefore, $\#E\lesssim p^{\frac32s+\frac12t}$.

    Let $\cL=\{ y=ax+b: a=1, \dots,  p^{\frac{t-s}{2}},b=1, \dots,  p^{\frac{s+t}{2}} \}$. Here, we view $a,b$ as elements in $\Fp$, and hence $y=ax+b$ as a line in $\Fp^2$. For each $L\in \cL$, let $Y(L)=\{(x,y)\in L:x=1,\dots, \frac12p^s\}$. We see that $E=\bigcup_{L\in \cL}Y(L)$ is contained in $\{(x,y): x=1,\dots,  \frac12p^s, y=1,\dots, 2 p^{\frac{s+t}{2}} \}$. Therefore, $\#E\lesssim p^{\frac32s+\frac12t}$. 
\end{enumerate}
\end{proof}

\begin{figure}[ht]
\centering
\begin{minipage}[b]{0.45\linewidth}
\includegraphics[width=5cm]{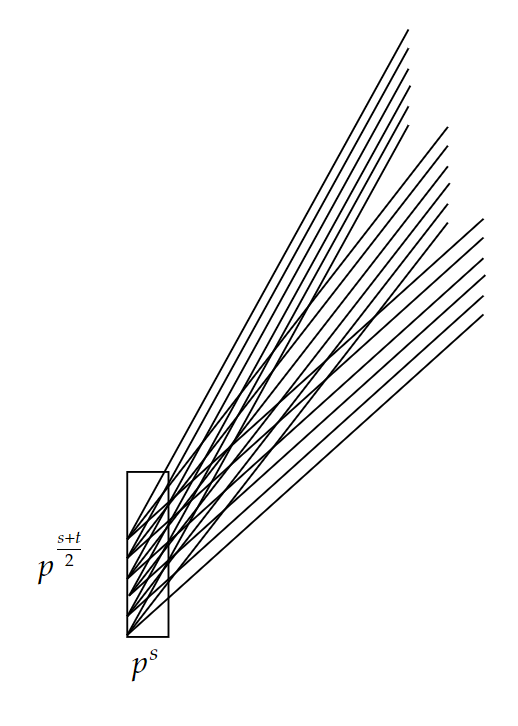}
\caption{}
\label{F0}
\end{minipage}
\end{figure}

\bigskip

Next, we define the index for Marstrand's orthogonal projection.

\begin{definition}[Marstrand index]\label{defM}
Let $1\le k<n$ be integers. Let $a\in (0,n], s>0$.
We give the following definition for $\bM(a,s;n,k)$. We write $a=m+\beta, s=l+\ga$ where $m,l\in\N$ and $\beta,\ga\in(0,1]$. For convenience, we call such an expression of $a,s$ canonical.

\textbf{\textup{Type 1}}: If $s>\min\{a,k\}$, define
\begin{align}
    \bM(a,s;n,k)=k(n-k).
\end{align}

\textbf{\textup{Type 2}}: If $s\le\min\{a,k\}$, $l+1\le m\le n+l-k$ and $ \ga\in(\beta,1]$, define 
\begin{align}
    \bM(a,s;n,k)=k(n-k)-(m-l)(k-l)+\max\{2\ga-(\beta+1),0\}.
\end{align}

\textbf{\textup{Type 3}}: If $s\le\min\{a,k\}$, $l\le m\le n+l-k-1$ and $ \ga\in(0,\beta] $, define
\begin{align}
\bM(a,s;n,k)=k(n-k)-(m+1-l)(k-l)+\max\{2\ga-\beta,0\}.
\end{align}

\textbf{\textup{Type 4}}: If $s\le a-(n-k)$, define
\begin{align}
    \bM(a,s;n,k)=-\infty.
\end{align}
\end{definition}

\begin{remark}
{\rm $\bM(a,s;n,k)=-\infty$ corresponds to the case when the exceptional set is empty. We remark that in the Euclidean setting, it is assumed that the empty set has dimension $-\infty$, to satisfy a Fubini-type heuristic: $\dim(A\times B)=\dim A+ \dim B$.
}    
\end{remark}

One can check, when $(n,k)=(2,1)$,
\begin{equation}
    \bM(a,s;2,1)=\begin{cases}
        1 & s>\min\{1,a\}\\
        \max\{0,2s-a\} & a-1<s\le \min\{1,a\}\\
        -\infty & s\le a-1.
    \end{cases}
\end{equation}

We discuss a construction of an exceptional set.

\begin{figure}[ht]
\centering
\begin{minipage}[b]{0.45\linewidth}
\includegraphics[width=5cm]{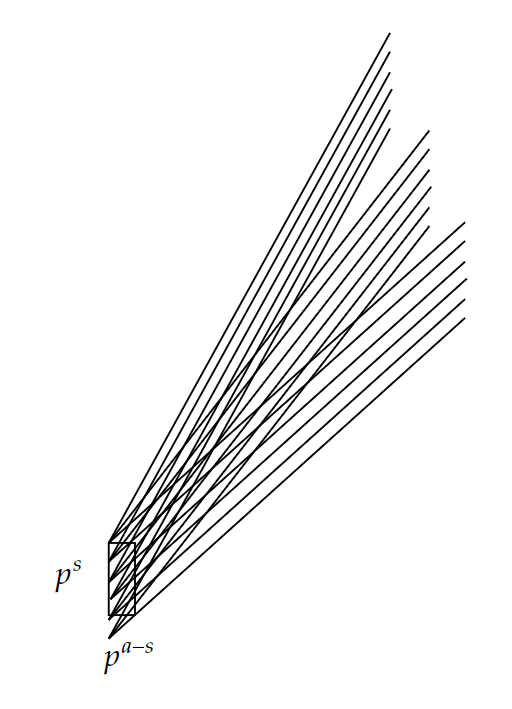}
\caption{}
\label{M0}
\end{minipage}
\end{figure}

\begin{proposition}\label{oberlin2d}
    Let $a\in (0,2]$, $s\in (\frac{a}{2},\min\{1,a\}]$. Then for $p$ large enough, there exists $A\subset \Fp^2$ with $\#A\gtrsim p^a$, such that
    \[ \#E_s(A;2,1)\gtrsim p^{2s-a}. \]
\end{proposition}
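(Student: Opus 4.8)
The plan is to reduce Proposition~\ref{oberlin2d} to the construction already contained in Proposition~\ref{prop2d} by exploiting the duality between exceptional sets for projections and Furstenberg sets. Concretely, for $V\in G(1,\Fp^2)$ the fiber $(\pi_V^*)^{-1}(W)$ of a point $W=x+V\in A(1,\Fp^2)$ is the line $W$ itself, so $\#\pi_V^*(A)$ counts the number of distinct lines in direction $V^\perp$ (equivalently, in direction $V$, since we are in the plane) meeting $A$. Thus $V\in E_s(A;2,1)$ means that $A$ is covered by fewer than $p^s$ lines of a fixed direction. If this happens for a large family $\cV$ of directions, then picking one small subline $Y(V)\subset$ each such covering line realizes $A$ (or a large piece of it) as a union of $(s,\cdot)$-Furstenberg structures. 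The task is to run this backwards: build $A$ of size $\approx p^a$ so that in $\approx p^{2s-a}$ directions it lies in $\approx p^s$ lines.

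The explicit construction I would use is the Szemer\'edi--Trotter example from case (3) of the proof of Proposition~\ref{prop2d}. Set $A$ to be a grid $\{(x,y): x\in\{1,\dots,X\},\ y\in\{1,\dots,Y\}\}\subset\Fp^2$ with $X\cdot Y\approx p^a$, where the aspect ratio is chosen so that $X\approx p^{2s-a}$ (forced, as I explain below) and hence $Y\approx p^{2(a-s)}$; one checks $s\in(\tfrac a2,\min\{1,a\}]$ guarantees $0<X\le Y\le p$ for $p$ large, so the grid genuinely sits inside $\Fp^2$ and has $\#A\gtrsim p^a$. For each slope $j\in\{1,\dots,p^{2s-a}\}$ (viewed in $\Fp$), consider the direction $V_j$ of the line $y=jx$. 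A line in direction $V_j$ through $A$ has the form $y=jx+b$; as $x$ ranges over $\{1,\dots,X\}$ the value $jx+b$ ranges over an interval of length $\le jX\lesssim p^{2s-a}\cdot p^{2s-a}$, hmm — let me instead track it as: the image $\pi_{V_j}^*(A)$ is parametrized by the intercepts $b=y-jx$ with $x\in\{1,\dots,X\}$, $y\in\{1,\dots,Y\}$, so it lies in a set of size $\lesssim Y + jX$. Choosing the slope range $\{1,\dots,p^{(a-s)}\}$... I would calibrate $X,Y$ and the slope range simultaneously so that $Y+ (\text{max slope})\cdot X \lesssim p^{s}$ while the number of slopes is $\gtrsim p^{2s-a}$ and $XY\gtrsim p^a$; the three constraints $XY\approx p^a$, $\#\text{slopes}\approx p^{2s-a}$, $Y\cdot\#\text{slopes}\cdot X$-type bound $\approx p^s$ pin down $X\approx p^{2s-a}$, $Y\approx p^{2a-2s}$ wait that exceeds $p^a$ when $s<a$ — so more care is needed and the correct calibration is $Y\approx p^{s}$, $X\approx p^{a-s}$, number of slopes $\approx p^{2s-a}$, with slopes taken small enough ($\le p^{s-(a-s)}=p^{2s-a}$ times... ) that $jX\lesssim Y$. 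With $s\le a$ we get $2s-a\le s$, and $X\approx p^{a-s}$, so $jX\le p^{2s-a}\cdot p^{a-s}=p^{s}\approx Y$: consistent. Then $\#\pi_{V_j}^*(A)\lesssim Y\approx p^s$, so after absorbing the $\lesssim$ into the definition (using $s-\e$ if necessary, or noting the definition of $E_s$ allows strict inequality up to constants), all $\gtrsim p^{2s-a}$ directions $V_j$ lie in $E_s(A;2,1)$, giving $\#E_s(A;2,1)\gtrsim p^{2s-a}$.

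The steps in order: (i) fix the exponents $X=p^{a-s}$, $Y=p^{s}$ and the slope set $\cJ=\{1,\dots,\lfloor p^{2s-a}\rfloor\}$; (ii) verify $X,Y\le p$ and $\#A=XY\gtrsim p^a$ using $a/2<s\le\min\{1,a\}$; (iii) for each $j\in\cJ$ bound $\#\pi_{V_j}^*(A)$ by counting distinct intercepts $y-jx$, getting $\lesssim Y + jX\lesssim p^s$, and conclude $V_j\in E_s(A;2,1)$ (this is where one must be slightly careful about strict versus non-strict inequality and the polylog losses hidden in $\lesssim$, possibly shrinking $X,Y$ by a constant factor as in the proof of Proposition~\ref{prop2d} where a factor $\tfrac12$ is inserted); (iv) check that distinct $j$ give distinct directions $V_j\in G(1,\Fp^2)$ for $p$ large (true since $\#\cJ<p$), so $\#E_s(A;2,1)\ge\#\cJ\gtrsim p^{2s-a}$.

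The main obstacle is the bookkeeping in step (iii): getting the constants and the exponent in $\#\pi_{V_j}^*(A)\lesssim p^s$ exactly right, since the intercept count is genuinely $Y+jX$ rather than just $Y$, and one needs the slope range to be short enough that the $jX$ term does not dominate while still being long enough ($\gtrsim p^{2s-a}$) to give the claimed count — this is precisely the Szemer\'edi--Trotter balancing already performed in case (3) of Proposition~\ref{prop2d}, reinterpreted on the dual side, so I would present it as a direct adaptation of that construction rather than redo it from scratch. A secondary point to handle cleanly is the degenerate regime $s=\min\{1,a\}$ (so $2s-a$ could equal $s$ or $1$), where one should double-check the grid still fits inside $\Fp^2$; this is the reason the hypothesis restricts $s$ to $(\tfrac a2,\min\{1,a\}]$.
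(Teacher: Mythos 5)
Your construction is essentially identical to the paper's proof: both take $A$ to be a grid of size roughly $p^{a-s}\times p^{s}$, consider the $\approx p^{2s-a}$ directions of small slope, and bound the number of distinct intercepts $y-jx$ by $\lesssim p^{s}+p^{2s-a}\cdot p^{a-s}\sim p^{s}$ to conclude those directions lie in the exceptional set. The paper handles the strict inequality $\#\pi_V^*(A)<p^s$ by inserting constants like $\tfrac15$ and $\tfrac12$ rather than by passing to $s-\e$, which is the cleaner of the two options you floated, but otherwise the argument is the same.
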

\begin{proof}
The heuristic is shown in Figure \ref{M0}. Our set $A$ is the lattice points in the rectangle of size $\frac15 p^{a-s}\times \frac15 p^s$. For any slope $k\in [1,p^{2s-a}]$, we can find $<p^s$ lines with slope $k$ to cover $A$. Hence, this slope-$k$ direction belongs to the exceptional set.  

Consider the set $A=\{(x,y): |x|\le \frac15p^{a-s}, |y|\le \frac15p^s \}\subset \Fp^2$. For $k,m\in\Fp$, we use $\ell_{k,m}$ to denote the line $ y=kx+m$. We claim that for each $|k|\le \frac15p^{2s-a}$,  
\[\pi_{\ell_{k,0}}^*(A)\subset \{\ell_{k,m}:|m|< \frac12 p^s\}.\]
Note that if $(x,y)\in A$ and $|k|\le \frac15p^{2s-a}$, then $|y-kx|\le \frac12 p^s$, which means that there exists $|m|< \frac12 p^s$ such that $(x,y)\in \ell_{k,m}$.
Hence, $\#\pi_{\ell_{k,0}}^*(A)<p^s$. This means $E_s(A;2,1)\supset \{\ell_{k,0}: |k|\le \frac15 p^{2s-a}\}$.

We see that $\#A\gtrsim p^a$, and $E_s(A;2,1)$
has cardinality $\gtrsim p^{2s-a}$.
\end{proof}

\subsection{Useful lemmas} We prove several useful lemmas, which will be used later in the proof. We suggest that the reader skip this part in the first reading.

\begin{lemma} Suppose $G(k,\Fp^n)$ is the set of $k$-dimensional subspaces in $\Fp^n$, $A(k,\Fp^n)$ is the set of $k$-planes in $\Fp^n$. We have: 
\begin{enumerate}
    \item $\#G(k,\Fp^n)\sim p^{k(n-k)}$;
    \item $\# A(k,\Fp^n)\sim p^{(k+1)(n-k)}$;
    \item Fix integers $l,k$ with $l+k\ge n$. Let $W_0\in A(l,\Fp^n)$. Then $\#\{ V\in A(k,\Fp^n): V\cap W_0 \textup{~is~a~}(l+k-n)\textup{-plane} \}\gtrsim p^{(k+1)(n-k)}$.
\end{enumerate}
    
\end{lemma}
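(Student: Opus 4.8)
The plan is to establish the three items in order, deducing (2) from (1) and reducing (3) to a short linear-algebra count over $\Fp$. For (1), I would identify $\#G(k,\Fp^n)$ with the Gaussian binomial coefficient $\binom{n}{k}_p=\prod_{i=1}^{k}\frac{p^{\,n-k+i}-1}{p^{\,i}-1}$ and note that, since $p\ge 2$ gives $x/2\le x-1\le x$ for every $x=p^{\,j}$ with $j\ge 1$, each of the $k$ factors lies in $[\tfrac12 p^{\,n-k},\,2p^{\,n-k}]$; as $k$ is fixed, the product lies in $[2^{-k}p^{\,k(n-k)},\,2^{k}p^{\,k(n-k)}]$, which is exactly $\#G(k,\Fp^n)\sim p^{\,k(n-k)}$. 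For (2), I would use that every affine $k$-plane is a coset of a unique linear $k$-subspace, and that such a subspace has precisely $p^{n}/p^{k}=p^{\,n-k}$ cosets; hence $\#A(k,\Fp^n)=p^{\,n-k}\,\#G(k,\Fp^n)$, and (1) gives $\#A(k,\Fp^n)\sim p^{(k+1)(n-k)}$.

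For (3), I would first translate the ambient space by a point of $W_0$ so as to assume $W_0=W$ is a \emph{linear} subspace of dimension $l$; this does not change the count. Writing a general $k$-plane as $V=y+U$ with $U\in G(k,\Fp^n)$, I would use the identity $\dim(U\cap W)=k+l-\dim(U+W)$ together with the observation that $V\cap W\ne\emptyset$ precisely when $y\in U+W$. These yield the key equivalence: $V\cap W$ is an $(l+k-n)$-plane if and only if $U+W=\Fp^n$, and in that case the intersection is automatically nonempty, of the minimal dimension $k+l-n$, for \emph{every} coset $y$. Consequently the quantity to be estimated equals $p^{\,n-k}\cdot M$, where $M:=\#\{U\in G(k,\Fp^n):\,U+W=\Fp^n\}$, and it remains to prove $M\gtrsim p^{\,k(n-k)}$.

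To bound $M$ from below I would stratify by $U_0:=U\cap W$: the condition $U+W=\Fp^n$ forces $\dim U_0=k+l-n$, and $U_0$ ranges over the $(k+l-n)$-dimensional subspaces of the $l$-dimensional space $W$, of which there are $\#G(k+l-n,\Fp^{l})\sim p^{(k+l-n)(n-k)}$ by (1). With $U_0$ fixed, I would pass to the quotient $\Fp^n/U_0$, of dimension $2n-k-l$: the $k$-dimensional $U\supseteq U_0$ with $U\cap W=U_0$ correspond exactly to the complements of the $(n-k)$-dimensional subspace $W/U_0$ inside $\Fp^n/U_0$, and the complements of a $d$-dimensional subspace in a $D$-dimensional space form a torsor under a $\mathrm{Hom}$-space and hence number $p^{\,d(D-d)}$, here $p^{(n-k)(n-l)}$. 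Multiplying the two counts gives $M\gtrsim p^{(k+l-n)(n-k)+(n-k)(n-l)}=p^{\,k(n-k)}$, so the set in (3) has size $\gtrsim p^{\,n-k}\cdot p^{\,k(n-k)}=p^{(k+1)(n-k)}$ (it is also $\lesssim p^{(k+1)(n-k)}$ by (2), so in fact $\sim$, although only the lower bound is asserted).

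Parts (1) and (2) are routine. The one step that needs genuine care is the equivalence in (3) — one must check both that $V\cap W$ is nonempty, so that it is a bona fide affine plane rather than the empty set, and that its dimension is exactly the minimal value $k+l-n$ rather than larger — and, after that, the bookkeeping in the two-step count of $M$; the single external fact used there is the standard parametrisation of the complements of a fixed subspace by a $\mathrm{Hom}$-space.
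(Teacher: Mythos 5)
Your proposal is correct, and for part (3) it takes a genuinely different route from the paper (parts (1), (2) are simply asserted as ``fundamental'' there, so your Gaussian-binomial and coset counts are a fine, more explicit substitute). The paper proves (3) by a subtraction argument: it bounds from above the number of $V\in A(k,\Fp^n)$ whose intersection with $W_0$ is an $r$-plane with $r>l+k-n$ (choose the $r$-plane $L=V\cap W_0$ inside $W_0$, then the $V\supset L$), obtaining $\lesssim p^{(k+1)(n-k)-1}$, and concludes that the transverse $V$ make up $\gtrsim p^{(k+1)(n-k)}$ of the total $\#A(k,\Fp^n)\sim p^{(k+1)(n-k)}$. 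You instead count the good set directly from below: the reduction $V\cap W_0$ is an $(l+k-n)$-plane iff the direction spaces satisfy $U+W=\Fp^n$, then the factor $p^{\,n-k}$ for the coset, the stratification by $U_0=U\cap W$, and the $\mathrm{Hom}$-space parametrisation of complements in $\Fp^n/U_0$; the exponent arithmetic $(k+l-n)(n-k)+(n-k)(n-l)=k(n-k)$ is right. Your direct lower bound has the small advantage that it never needs to enumerate the ways transversality can fail --- in particular the case $V\cap W_0=\emptyset$ (possible for affine planes even when $k+l\ge n$), which the paper's subtraction argument must also implicitly exclude but does not mention --- and, combined with (2), it even yields the matching upper bound, i.e.\ a $\sim$ rather than just $\gtrsim$; the paper's approach, on the other hand, is shorter and reuses the same ``choose the intersection, then extend'' count that it needs anyway in the proof of Lemma \ref{easylem2}.
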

\begin{proof}
    (1) and (2) are fundamental.
    We prove (3). $V\cap W_0$ is a $(l+k-n)$-plane means $V$ intersects $W_0$ transversely. (3) agrees with intuition in $\R^n$, since generic $V\in A(k,\R^n)$ intersect $W_0$ transversely. Noting (2), we see that it suffices to prove 
    \begin{equation}\label{rplane}
        \#\{ V\in A(k,\Fp^n): V\cap W_0 \textup{~is~a~}r\textup{-plane} \}\lesssim p^{(k+1)(n-k)-1}, 
    \end{equation} 
    for $r>l+k-n$.
    The proof of \eqref{rplane} is as follows. Choose a $r$-plane $L$ in $W_0$. There are $\#G(r,\Fp^l)\sim p^{(r+1)(l-r)}$ choices for $L$. If $V\cap W_0=L$, then $V\supset L$. We have $\le \#\{V\in A(k,\Fp^n): V\supset L\}=\#G(k-r,\Fp^{n-r})\sim p^{(k-r)(n-k)}$ choices for $V$ such that $V\cap W_0=L$. The two estimates give \eqref{rplane}.
\end{proof}

\begin{lemma}\label{easylem2}
Let $0\le l\le n$, $1\le k,m\le n$ be integers that satisfy $n-k\ge m-l$ and $l\le k,m$. If $W\in G(m,\Fp^n)$, then
\[ \#\{ V\in G(n-k,\Fp^n):\#(\pi^*_V(W))\le p^l \}\sim p^{k(n-k)-(k-l)(m-l)}. \]
\end{lemma}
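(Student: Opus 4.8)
The statement is a counting identity: given a fixed subspace $W\in G(m,\Fp^n)$, we must count the $(n-k)$-dimensional subspaces $V$ whose associated projection $\pi_V^*$ collapses $W$ to a set of size at most $p^l$. Recall that $\pi_V^*(x)=x+V$, so $\pi_V^*$ restricted to $W$ has image of size $p^{\dim(W/(W\cap V))}=p^{\dim W-\dim(W\cap V)}$. Hence the condition $\#\pi_V^*(W)\le p^l$ is exactly the condition $\dim(W\cap V)\ge m-l$. So the plan is to prove
\[
\#\{V\in G(n-k,\Fp^n):\dim(V\cap W)\ge m-l\}\sim p^{k(n-k)-(k-l)(m-l)}.
\]

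The main step is to count, for each fixed $r$ with $m-l\le r\le m$, the number $N_r$ of $V\in G(n-k,\Fp^n)$ with $\dim(V\cap W)=r$ (or the cruder $\ge r$), and then observe that the dominant contribution comes from the smallest allowed value $r=m-l$. To count $N_{\ge r}$ one fibers over the choice of the subspace $U:=V\cap W$: pick $U\in G(r,W)$, which costs $\#G(r,\Fp^m)\sim p^{r(m-r)}$ choices, and then count $V\in G(n-k,\Fp^n)$ with $V\supset U$; the latter is in bijection with $G(n-k-r,\Fp^{n-r})$, giving $\sim p^{(n-k-r)(k)}$. Multiplying, $N_{\ge r}\lesssim p^{r(m-r)+k(n-k-r)}=p^{k(n-k)-r(k-m+r)}$. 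The exponent $k(n-k)-r(k-m+r)$ is a concave-looking quadratic in $r$; on the range $r\ge m-l$ (and $r\le m\le k$) the factor $k-m+r\ge k-l>0$... wait, I need $r(k-m+r)$ to be minimized at $r=m-l$. Since $k-m+r$ is increasing in $r$ and positive on the relevant range (as $k\ge m-l$... actually $k-m+r\ge k-m+(m-l)=k-l\ge 0$), the product $r(k-m+r)$ is increasing in $r$ there, so indeed the exponent is maximized at $r=m-l$, yielding the claimed upper bound $\lesssim p^{k(n-k)-(m-l)(k-(m-l)-m+... )}$; one checks $(m-l)(k-m+(m-l))=(m-l)(k-l)$, matching the target. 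Summing the geometric-in-$p$ contributions over $r$ only changes the constant, so the upper bound $\lesssim p^{k(n-k)-(k-l)(m-l)}$ follows.

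For the matching lower bound $\gtrsim$, I would exhibit enough subspaces: fix a single $U_0\in G(m-l,W)$, and count $V\in G(n-k,\Fp^n)$ with $V\supset U_0$ and $V\cap W=U_0$ exactly (transverse intersection outside $U_0$). Modding out by $U_0$, this is counting $(n-k-(m-l))$-planes in $\Fp^{n-(m-l)}$ meeting the $(l)$-dimensional image of $W$ trivially; the hypotheses $n-k\ge m-l$ and $l\le k$ guarantee this is a positive-proportion (hence $\gtrsim p^{(n-k-(m-l))\cdot(n-(n-k))}=p^{(n-k-m+l)k}$) count of such $V$, and the constraint that the intersection be trivial costs only a constant factor (a transversality/genericity argument as in the proof of part (3) of the previous lemma). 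Combined with the single factor for choosing $U_0$ this is too small by itself; instead I let $U_0$ range over all of $G(m-l,W)$, getting $\gtrsim p^{(m-l)(m-(m-l))}\cdot p^{(n-k-m+l)k}=p^{(m-l)l+k(n-k)-k(m-l)}=p^{k(n-k)-(k-l)(m-l)}$, with no double-counting since we insisted $V\cap W=U_0$ exactly.

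The step I expect to be the genuine obstacle is the lower bound's transversality count: showing that among $V\supset U_0$ a positive proportion satisfy $V\cap W=U_0$ exactly rather than meeting $W$ in something larger. This is morally the same phenomenon as \eqref{rplane} in the preceding lemma, so I would reuse that argument — estimate the number of ``bad'' $V$ (those with $\dim(V\cap W)>m-l$) by the same fibering trick and check it is a factor $p^{-1}$ smaller, so that the ``good'' $V$ still dominate. The arithmetic of exponents is routine once the quadratic-in-$r$ monotonicity observation above is in hand; the only care needed is to track that all the dimension constraints ($n-k\ge m-l$, $l\le k$, $l\le m$) are exactly what make every Grassmannian appearing nonempty and every exponent nonnegative.
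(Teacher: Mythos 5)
Your proposal is correct and follows essentially the same route as the paper: rewrite the condition $\#\pi_V^*(W)\le p^l$ as $\dim(V\cap W)\ge m-l$, fiber over $U=V\cap W$ to get the upper bound (with the dominant contribution at $\dim U=m-l$), and get the matching lower bound by counting $V$ with $V\cap W=U_0$ exactly, where the ``bad'' $V$ meeting $W$ in a larger subspace are shown to be a factor $p^{-1}$ fewer — exactly the transversality computation the paper carries out, and your exponent bookkeeping (including the monotonicity of $r(k-m+r)$ and the use of $l\le k$, $n-k\ge m-l$) is sound.
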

\begin{proof}
By definition
\begin{align*}
    \{ V\in G(n-k,\Fp^n):\#(\pi_V^*(W))\le p^l \}&=\{V\in G(n-k,\Fp^n):\#(V\cap W)\ge \#W/p^l= p^{m-l}\},
\end{align*} 
The latter set is
\[ \bigcup_{j\ge m-l}\bigcup_{U\in G(j,W)}\{V\in G(n-k,\Fp^n):V\supset U\}. \]

It suffices to show for $j\ge m-l$,
\begin{equation}\label{grassineq}
    \#\bigcup_{U\in G(j,W)}\{V\in G(n-k,\Fp^n):V\supset U\}\sim p^{k(n-k)-(k+j-m)j}.
\end{equation}

On the one hand, the LHS is 
\begin{equation}
    \begin{split}
        &\le \sum_{U\in G(j,W)}\#\{V\in G(n-k,n):V\supset U\}= \#G(j,\Fp^m)\#G(n-k-j,\Fp^{n-j})\\
        &\sim p^{j(m-j)}p^{(n-k-j)k}=p^{k(n-k)-(k+j-m)j}.
    \end{split}
\end{equation} 

On the other hand, for fixed $U\in G(j,W)$, we have
\begin{equation}
    \begin{split}
        &\{V\in G(n-k,\Fp^n): V\cap W=U\}\\
        \supset &\{V\in G(n-k,\Fp^n): V\supset U\}\setminus \bigcup_{j+1\le j'\le n-k}\bigcup_{U'\in G(j',W), U'\supset U}\{V\in G(n-k,\Fp^n): V\supset U'\} 
    \end{split}
\end{equation} 
One can check $\#\{V\in G(n-k,\Fp^n): V\supset U\}=\#G(n-k-j,\Fp^{n-j})=p^{k(n-k-j)}$. $\bigcup_{j+1\le j'\le n-k}\bigcup_{U'\in G(j',W), U'\supset U}\{V\in G(n-k,\Fp^n): V\supset U'\} $ has cardinality
\begin{equation}
    \begin{split}
        &\le \sum_{j+1\le j'\le n-k}\#G(j'-j,\Fp^{m-j})\#G(n-k-j',n-j')\\
        &\sim \sum_{j+1\le j'\le n-k}p^{(j'-j)(m-j')+(n-k-j')k}\\
        &=\sum_{j+1\le j'\le n-k}p^{k(n-k-j)+(j'-j)(m-k-j')}\\
        &\lesssim p^{k(n-k-j)-1}.
    \end{split}
\end{equation} 
In the last line, we use $(j'-j)(m-k-j')\le m-k-j-1\le m-k-(m-l)-1\le -1$.
Therefore, 
\[ \#\{V\in G(n-k,\Fp^n): V\cap W=U\}\sim p^{k(n-k-j)}. \]

We can bound the left-hand side of \eqref{grassineq} by
\[ \ge \#\bigsqcup_{U\in G(j,W)}\{V\in G(n-k,\Fp^n): V\cap W=U\}\sim p^{j(m-j)+k(n-k-j)}=p^{k(n-k)-(k+j-m)j}. \]
\end{proof}

\begin{lemma}
    $\bF(s,t;n,k)$ is increasing in $s$ and $t$.
\end{lemma}

\begin{lemma}\label{easybound}
    Suppose $(s,t;n,k)$ is admissible, then 
    \[ \bF(s,t;n,k)\ge s+\max\{0,t-k(n-k)\}. \]
\end{lemma}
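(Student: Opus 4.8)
Set $R:=s+\max\{0,\,t-k(n-k)\}$ for the claimed lower bound. The plan is to run through the four cases of Definition~\ref{deffurindex} and verify $\bF(s,t;n,k)\ge R$ in each one (with equality, in fact, in cases (a) and (b)).

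First I would dispose of cases (a) and (b). When $s=0$ the definition gives $\bF(0,t;n,k)=\max\{0,t-k(n-k)\}=R$ on the nose. When $s>0$ with $t\le(k-d-1)(n-k)$, I would note that $n>k$ forces $(k-d-1)(n-k)\le(k-1)(n-k)\le k(n-k)$, so $t\le k(n-k)$ and $R=s=\bF(s,t;n,k)$. What remains is the main case $s\in(0,k]$, $t\in((k-d-1)(n-k),(k+1)(n-k)]$, with the canonical expansion~\eqref{expression}. Here the first move is to record that cases (c) and (d) can be handled uniformly through the single formula
\[ \bF(s,t;n,k)=s+m+\min\{\tau,\tfrac12(\si+\tau),1\}, \]
which is exactly case (c) and, when $\tau>2$, collapses to $s+m+1$ because $\si>0$ makes $\tfrac12(\si+\tau)>1$. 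I would then split on the sign of $t-k(n-k)$: if $t\le k(n-k)$ then $R=s$, and $m\ge0$ together with positivity of the minimum already gives $\bF(s,t;n,k)\ge s=R$; if $t>k(n-k)$, I would substitute $t=(k-d-1)(n-k)+(d+2)m+\tau$ to get $t-k(n-k)=(d+2)m+\tau-(d+1)(n-k)$, whereupon $\bF(s,t;n,k)\ge R$ rearranges to
\[ (d+1)(n-k-m)\ \ge\ \max\{0,\,\tfrac12(\tau-\si),\,\tau-1\}. \]

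The one place that uses the detailed constraints of~\eqref{expression} — and hence the only place needing a little care — is this last inequality, but it is not really an obstacle: the right side is at most $d+1$ since $\tau-1\le(d+2)-1=d+1$ and $\tfrac12(\tau-\si)<\tfrac12(d+2)\le d+1$ (using $\tau\le d+2$, $\si>0$, $d\ge0$), while the left side is at least $d+1$ because $m\le n-k-1$. So the whole argument is bookkeeping; the thing to stay alert to is that the two inequalities $m\le n-k-1$ and $\tau\le d+2$ coming from the canonical expansion are precisely what make the $t>k(n-k)$ case go through.
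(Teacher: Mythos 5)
Your proposal is correct and follows essentially the same route as the paper: reducing to the inequality $\bF(s,t;n,k)\ge s+t-k(n-k)$ and verifying it case-by-case from Definition~\ref{deffurindex} with elementary bounds on $\si,\tau,m,d$. Your only deviation is the (valid) unification of cases (c) and (d) via $\bF=s+m+\min\{\tau,\tfrac12(\si+\tau),1\}$, which the definition itself already records parenthetically, so the arguments are substantively identical.
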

\begin{proof}
It is easy to check $\bF(s,t;n,k)\ge s$, so we just need to check $\bF(s,t;n,k)\ge s+t-k(n-k)$. We check four cases as in Definition \ref{deffurindex}.

    (a) If $s=0$, this is true by definition.

    (b) If $s>0$, write $s=d+\si.$ If $t\in [0,(k-d-1)(n-k)]$, then
    \[ \bF(s,t;n,k)=s\ge s+t-k(n-k). \]

    (c) If \eqref{expression} holds and $\tau\in (0,2]$, then
    \[ \bF(s,t;n,k)=s+m+\min\{\tau,\frac12(\si+\tau),1\}. \]
    To show it is $\ge s+t-k(n-k)$ is equivalent to show
    \[ (d+1)(n-k-m)+\min\{\tau,\frac12\si+\frac12\tau,1\}\ge \tau. \]
Since $(d+1)(n-k-m)\ge 1\ge \frac12\tau$ and $\min\{\tau,\frac12\si+\frac12\tau,1\}\ge \frac12\tau$, it is proved.

    (d) In the final case, we have
    \[ \bF(s,t;n,k)=s+m+1. \]
    To show it is $\ge s+t-k(n-k)$ is equivalent to show
    \[ (d+1)(n-k-m)+1\ge \tau. \]
    This is true since $(d+1)(n-k-m)\ge d+1$ and $\tau\le d+2$.
\end{proof}

\begin{lemma}\label{lipint}
$\bF(s,t;n,k)$ is $1$-Lipschitz in $t$. More precisely,
suppose that $(s,t_1+t_2;n,k)$ is admissible, $t_1,t_2\ge 0$. Then
    \[ \bF(s,t_1+t_2;n,k)\le t_1+\bF(s,t_2;n,k). \]
\end{lemma}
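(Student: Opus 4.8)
The plan is to prove the stronger statement that, for fixed admissible $s$ and fixed $n,k$, the function
\[ t\longmapsto \bF(s,t;n,k) \]
is continuous, piecewise linear and nondecreasing on the interval $[0,(k+1)(n-k)]$, with every slope equal to $0$, $\tfrac12$ or $1$. Granting this, $\bF(s,\cdot;n,k)$ is $1$-Lipschitz in $t$; since $(s,t_1+t_2;n,k)$ is admissible and $t_1,t_2\ge 0$, we have $0\le t_2\le t_1+t_2\le(k+1)(n-k)$, so $(s,t_2;n,k)$ and all intermediate pairs $(s,u;n,k)$, $u\in[t_2,t_1+t_2]$, are admissible and
\[ \bF(s,t_1+t_2;n,k)-\bF(s,t_2;n,k)\le (t_1+t_2)-t_2=t_1, \]
which is the lemma. (One could instead check the inequality directly in each case of Definition \ref{deffurindex}, but the Lipschitz viewpoint organizes the bookkeeping more cleanly.)

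To verify the structure, fix $s$. If $s=0$ then $\bF(0,t;n,k)=\max\{0,t-k(n-k)\}$, which is continuous, piecewise linear and nondecreasing with slopes in $\{0,1\}$. If $s>0$, write $s=d+\si$ with $d\in\{0,\dots,k-1\}$ and $\si\in(0,1]$; note that $d$ and $\si$ do not depend on $t$. On $[0,(k-d-1)(n-k)]$ we have $\bF(s,t;n,k)=s$, constant. For $t$ in the remaining range we use the parametrization \eqref{expression}: as $t$ increases, $m$ runs over $\{0,\dots,n-k-1\}$ and, on the block with $m$ fixed, $t=(k-d-1)(n-k)+(d+2)m+\tau$ is an affine function of $\tau\in(0,d+2]$ with unit slope. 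On that block Definition \ref{deffurindex}(c),(d) gives $\bF(s,t;n,k)=s+m+g(\tau)$, where
\[ g(\tau):=\begin{cases}\min\{\tau,\tfrac12(\si+\tau),1\},&\tau\in(0,2],\\ 1,&\tau\in(2,d+2].\end{cases} \]
Since $\si\in(0,1]$ we have $\si\le 1\le 2-\si$, and a short computation shows $g(\tau)=\tau$ on $(0,\si]$, $g(\tau)=\tfrac12(\si+\tau)$ on $[\si,2-\si]$, and $g(\tau)=1$ on $[2-\si,d+2]$; hence $g$ is continuous and nondecreasing with slopes $1,\tfrac12,0$. As $t$ varies with unit speed in $\tau$ on each block, $\bF$ has slope at most $1$ there.

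Finally one checks that the pieces glue continuously. At the junction $t\to(k-d-1)(n-k)^+$ (that is $m=0$, $\tau\to 0^+$) we get $\bF\to s+0+g(0^+)=s$, matching the constant piece. At the junction between block $m$ and block $m+1$: on the left $\tau=d+2$ gives $\bF=s+m+g(d+2)=s+m+1$, and on the right $\tau\to 0^+$ gives $\bF\to s+(m+1)+g(0^+)=s+m+1$, so the values agree; the endpoint $t=(k+1)(n-k)$ is attained at $m=n-k-1$, $\tau=d+2$. (When $d=k-1$ the interval $[0,(k-d-1)(n-k)]$ degenerates to $\{0\}$, but nothing else changes.) This establishes the claimed structure and hence the lemma. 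The only genuine work is this last bit of bookkeeping — tracking continuity across the finitely many break points and confirming no slope exceeds $1$ — and the mild care needed for the degenerate case $d=k-1$ and for reading the half-open blocks $\tau\in(0,d+2]$ via one-sided limits; there is no conceptual obstacle.
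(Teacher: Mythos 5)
Your proof is correct and follows the same strategy as the paper: reduce to showing $t\mapsto\bF(s,t;n,k)$ is $1$-Lipschitz and verify this from Definition \ref{deffurindex}. The paper only sketches this verification (``We see the derivative on $t$ is $\le 1$''), whereas you carry out the bookkeeping in full, including continuity across the block boundaries; the substance is the same.
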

\begin{proof}
    This is equivalent to showing that for fixed $s,n,k$, the map $t\mapsto \bF(s,t;n,k)$ is Lipschitz with Lipschitz constant $\le 1$. We sketch the proof. First, it is not hard to show that $t\mapsto \bF(s,t;n,k)$ is continuous. Since $\bF(s,t;n,k)$ is piecewise defined, we check each formula of $\bF(s,t;n,k)$. We see that the derivative on $t$ is $\le 1$.   
\end{proof}

$\bF(s,t;n,k)$ is continuous in $t$, but not in $s$. However, we can still say something about $s$.
\begin{lemma}\label{lipins}
    $\bF(s,t;n,k)$ is locally left-Lipschitz in $s$. In other words, if we write $s=d+\si$ ($d\in\N, \si\in(0,1]$), we have
    \[ \bF(s-\theta,t;n,k)\ge \bF(s,t;n,k)-O(\theta) \]
    for any $0\le \theta< \si$.
\end{lemma}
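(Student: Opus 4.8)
The plan is to exploit the piecewise-polynomial (indeed piecewise-linear in each of the variables $s$, $t$ once the integer parts are fixed) structure of $\bF(s,t;n,k)$ given in Definition \ref{deffurindex}, and to show that as $s$ decreases by a small amount $\theta < \si$, the value of $\bF$ drops by at most $O(\theta)$. The key observation is that decreasing $s$ by $\theta<\si$ keeps the integer part $d = \lfloor s \rfloor$ unchanged (since $\si\in(0,1]$ and $\theta<\si$ means $s-\theta \in (d, d+\si]\subset(d,d+1]$), so we stay within the same ``integer regime'' for $s$; only $\si$ decreases to $\si-\theta$. First I would record that the interval $[0,(k-d-1)(n-k)]$ appearing in Case (b), and the decomposition \eqref{expression} of the $t$-axis into pieces of the form $((k-d-1)(n-k)+(d+2)m, (k-d-1)(n-k)+(d+2)(m+1)]$, depend only on $d$ (not on $\si$), so they are unchanged when $s\mapsto s-\theta$. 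Hence for fixed $t$ we remain in the same case (a)/(b)/(c)/(d) and with the same values of $m$ and $d$, while $\tau$ is unchanged (it is determined by $t$, $d$, $m$ only) and $\si \mapsto \si-\theta$.

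With that reduction in place, the proof is a case check against Definition \ref{deffurindex}:
\begin{itemize}
\item In Case (b), $\bF(s-\theta,t;n,k) = s-\theta = \bF(s,t;n,k)-\theta$, which is fine.
\item In Case (c), $\bF(s,t;n,k) = d+m+\min\{\si+\tau,\tfrac32\si+\tfrac12\tau,\si+1\}$, and replacing $\si$ by $\si-\theta$ decreases each of the three arguments of the $\min$ by at most $\tfrac32\theta$, hence decreases the $\min$ by at most $\tfrac32\theta$; so $\bF(s-\theta,t;n,k)\ge \bF(s,t;n,k)-\tfrac32\theta$.
\item In Case (d), $\bF(s,t;n,k)=s+m+1$, so $\bF(s-\theta,t;n,k)=\bF(s,t;n,k)-\theta$.
\item In Case (a) ($s=0$) there is nothing to prove since then $\si$ would be undefined; one only needs $\theta<\si$, so $s>0$ and we are never in Case (a) for the perturbed value either.
\end{itemize}
One small subtlety to address: if $s-\theta$ lands exactly on $\si-\theta = 0$, i.e. $\theta=\si$, the canonical expression changes ($d$ drops by one); but the hypothesis is the strict inequality $\theta<\si$, so this boundary case is excluded, and $s-\theta$ always has the same integer part $d$ and a positive fractional part. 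I would also note, for completeness, the endpoint cases $\tau=2$ (boundary of (c) and (d)) and $t$ on the boundary between two $m$-blocks: there $\bF$ is continuous in $t$ by Lemma \ref{lipint}, and the two adjacent formulas agree, so the estimate above holds with either formula.

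The main (and only real) obstacle is bookkeeping: making sure that the parameters $d$, $m$, $\tau$, and the case label really are insensitive to the perturbation $s\mapsto s-\theta$ for all admissible $t$, including at the boundaries of the $t$-intervals in \eqref{expression}, and then checking each of the finitely many formulas has $s$-derivative (more precisely, left derivative in $s$) bounded by $\tfrac32$ in absolute value. Once that is organized, each individual estimate is a one-line computation, and one obtains the claim with the explicit constant $O(\theta)=\tfrac32\theta$.
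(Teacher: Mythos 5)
Your proposal is correct and follows essentially the same route as the paper: the paper's (very brief) proof likewise observes that $s\mapsto \bF(s,t;n,k)$ is defined piecewise on the left-open intervals $(d,d+1]$, so the integer part $d$ (and hence $m$, $\tau$ and the case label) is unchanged when $\theta<\si$, and that each formula is Lipschitz in $\si$ on its piece. Your write-up simply makes the bookkeeping and the explicit constant $\tfrac32$ precise.
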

\begin{proof}
    We just note that $s\mapsto\bF(s,t;n,k)$ is defined piecewise for $s$ in left-open intervals of the form $(d,d+1]$ where $d\in\N$. Moreover, $s\mapsto\bF(s,t;n,k)$ is Lipschitz in each of these left-open intervals.
\end{proof}

The next results are for the Marstrand index.

\begin{lemma}
Each pair $(a,s)$ satisfying $a\in (0,n], s>0$ belongs to exactly one of the four types in Definition \ref{defM}.
\end{lemma}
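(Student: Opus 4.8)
The statement is that the four cases (Types 1–4) in Definition \ref{defM} are mutually exclusive and exhaustive for pairs $(a,s)$ with $a\in(0,n]$, $s>0$. The plan is to unwind the conditions into a clean case analysis on the integer parts $m=\lfloor a\rfloor$ (or $a-1$ if $a$ is an integer) and $l=\lfloor s\rfloor$ (with the analogous convention), using the canonical expressions $a=m+\beta$, $s=l+\gamma$ with $\beta,\gamma\in(0,1]$. First I would dispose of Type 1 versus Types 2–4: the condition $s>\min\{a,k\}$ is clearly the exact negation of $s\le\min\{a,k\}$, which is the common hypothesis of Types 2, 3, 4, so it suffices to show that under $s\le\min\{a,k\}$ exactly one of Types 2, 3, 4 applies.

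Next, within the regime $s\le\min\{a,k\}$, I would compare Type 4 with Types 2 and 3. Type 4 asks $s\le a-(n-k)$. I would argue that this condition forces, in canonical coordinates, $m\ge n-k+l$ (when $\gamma\le\beta$) or $m\ge n-k+l+1$ (when $\gamma>\beta$) — precisely the complement of the range-of-$m$ constraints imposed in Types 2 and 3. Concretely: Type 2 requires $l+1\le m\le n+l-k$ together with $\gamma\in(\beta,1]$; Type 3 requires $l\le m\le n+l-k-1$ together with $\gamma\in(0,\beta]$. So after splitting on the sign of $\gamma-\beta$ (the two subcases $\gamma\in(\beta,1]$ and $\gamma\in(0,\beta]$ are obviously exclusive and exhaustive since $\gamma\in(0,1]$), the remaining work is a purely arithmetic check that the interval of admissible $m$ for Type 2 (resp. Type 3), the condition $s\le a-(n-k)$ defining Type 4, and the lower bound on $m$ coming from $s\le\min\{a,k\}$ (which gives $m\ge l$, and $m\ge l+1$ when $\gamma>\beta$) partition all possibilities. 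I would also need to check the lower endpoints: that $s\le\min\{a,k\}$ with $\gamma>\beta$ indeed forces $m\ge l+1$ (so the left endpoint in Type 2 is automatic), and with $\gamma\le\beta$ forces $m\ge l$ (left endpoint of Type 3), and similarly that $m\le k$ is implied by $s\le k$ so it never conflicts.

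The one genuinely fiddly point — and the place I'd expect to spend the most care — is the interaction of the two separate inequalities $s\le a$ and $s\le k$ hidden inside $\min\{a,k\}$, together with the endpoint conventions for when $a$ or $s$ is an integer. I would translate $s\le a$ into the statement "$l<m$, or ($l=m$ and $\gamma\le\beta$)", translate $s\le k$ into "$l<k$, or ($l=k$ and $\gamma\le 1$)" i.e. just $l\le k$ with $l=k\Rightarrow\gamma$ unrestricted but then $s\le k$ forces $\gamma\le\ldots$, and check these against the $m$-ranges and against $s\le a-(n-k)\iff m-l\ge n-k$ (with an analogous $\pm1$ correction depending on $\gamma$ vs $\beta$). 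Assembling these, one sees: if $m-l\ge n-k$ (suitably corrected) we are in Type 4; otherwise $m-l$ lies strictly below that threshold and at least $0$ (or $1$), putting us in exactly Type 3 (if $\gamma\le\beta$) or Type 2 (if $\gamma>\beta$); and if $s>\min\{a,k\}$ we are in Type 1. No deep idea is needed — it is bookkeeping — but the endpoint/floor conventions are where an error would creep in, so I would write out the small table of cases $\gamma\lessgtr\beta$ crossed with the position of $m-l$ relative to $0,1,n-k-1,n-k$ explicitly rather than in prose.
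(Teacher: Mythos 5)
Your proposal is correct and follows essentially the same route as the paper: split off Type 1 via the threshold $\min\{a,k\}$, split off Type 4, and distinguish Types 2 and 3 by the sign of $\gamma-\beta$, reducing everything to translating $s\le a$ and $s\le a-(n-k)$ into integer constraints on $m-l$ (with the $\pm 1$ shift depending on whether $\gamma>\beta$). One small thing to make explicit when you fill in the bookkeeping: Type 4 as written only assumes $s\le a-(n-k)$, so to treat it inside the regime $s\le\min\{a,k\}$ you should note that $a-(n-k)\le\min\{a,k\}$ always holds for $a\le n$ (since $a-(n-k)\le a$ trivially and $a-(n-k)\le k\iff a\le n$), which is what makes Type 4 automatically disjoint from Type 1 — the paper records exactly this inequality before proceeding.
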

\begin{proof}
     To show this, we just need to prove that for a fixed $a\in (0,n]$, the ranges of $s$ determined by four cases form a partition of $(0,\infty)$. Type 1 corresponds to $s\in (\min\{a,k\},\infty)$. Type 4 corresponds to $s\in (0,a-(n-k)]$. Note that $a-(n-k)\le \min\{a,k\}$, so we just need to show the ranges of $s$ determined by Type 2 and Type 3 from a partition of $(a-(n-k),\min\{a,k\}]$. Since the ranges of $s$ determined by Type2 and Type 3 are disjoint, we just need to show
    \[ \{s: s\textup{~satisfies~Type~}  2\}\bigcup \{s: s\textup{~satisfies~Type~} 3\}=(a-(n-k),\min\{a,k\}]. \]
    
    We first show ``$\subset"$. Suppose $s$ satisfies the condition in Type 2. We want to show $s>a-(n-k)$, which is equivalent to $m<n+l-k+\ga-\beta$. This is true since $m\le n+l-k$ and $\ga>\beta$. Suppose $s$ satisfies the condition in Type 3. Again, we want to show $m<n+l-k+\ga-\beta$. This is true since $m\le n+l-k-1$ and $\ga-\beta> -\beta\ge -1$.

    Next, we show ``$\supset$". Suppose $s=l+\ga\in (a-(n-k),\min\{a,k\}]$. $s>a-(n-k)$ implies
    \[ m<n+l-k+\ga-\beta. \]
    $s\le a$ implies
    \[ m\ge l+\ga-\beta. \]
    If $\ga\in (\beta,1]$, then $l+1\le m\le n+l-k$. Hence, $s$ satisfies the condition in Type 2. If $\ga\in (0,\beta]$, then $l\le m\le n+l-k-1$. Hence, $s$ satisfies the condition in Type 3.
    \end{proof}

\begin{lemma}
    $\bM(a,s;n,k)$ is decreasing in $a$ and increasing in $s$.
\end{lemma}

\begin{lemma}\label{lipina}
    For $(a,s,n,k)$ and $(a-\theta,s-\theta;n,k)$ that are in the domain of $\bM$ and $\theta\ge 0$, we have
    \[ \bM(a-\theta,s-\theta;n,k)\le \bM(a,s;n,k). \]
\end{lemma}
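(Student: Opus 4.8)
The plan is to exploit that $a-s$ is invariant under $(a,s)\mapsto(a-\theta,s-\theta)$. Put $c:=a-s$. It then suffices to show that, for each fixed real $c$, the function $g(s):=\bM(s+c,s;n,k)$ is non-decreasing on its natural domain $\{s>0:\ s+c\in(0,n]\}$; since $0<s-\theta\le s$ and both $s$ and $s-\theta$ lie in that domain (by the hypothesis that $(a,s)$ and $(a-\theta,s-\theta)$ lie in the domain of $\bM$), this gives $\bM(a-\theta,s-\theta)=g(s-\theta)\le g(s)=\bM(a,s)$. Two cases are immediate: if $c<0$ then $s>a\ge\min\{a,k\}$, so every $(s+c,s)$ is of Type 1 and $g\equiv k(n-k)$; if $c\ge n-k$ then $s\le s+c-(n-k)=a-(n-k)$, so every $(s+c,s)$ is of Type 4 and $g\equiv-\infty$. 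Hence from now on assume $0\le c<n-k$.

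Under this assumption, if $s>k$ then $s>\min\{a,k\}=k$ and $g(s)=k(n-k)$ (Type 1); if $0<s\le k$ then $s\le\min\{a,k\}$ and $s>a-(n-k)$, so by the classification of the four types proved above $(a,s)$ is of Type 2 or Type 3. The key claim is that on $0<s\le k$, writing the canonical form $s=l+\gamma$ (so $0\le l\le k-1$ and $\gamma\in(0,1]$) and letting $\{c\}\in[0,1)$ denote the fractional part of $c$, one has
\[ g(s)\ =\ k(n-k)-(\lfloor c\rfloor+1)(k-l)+\max\{\gamma-\{c\},\,0\}. \qquad(\star) \]
To prove $(\star)$, write also $a=s+c=m+\beta$ in canonical form. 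A short computation with fractional parts gives $\beta=\gamma+\{c\}$ when $\gamma+\{c\}\le 1$ and $\beta=\gamma+\{c\}-1$ otherwise; correspondingly $m-l=\lfloor c\rfloor$ in the first case (so $\gamma\le\beta$, Type 3) and $m-l=\lfloor c\rfloor+1$ in the second (so $\gamma>\beta$, Type 2). Substituting into the Type 3 formula, resp. the Type 2 formula, and simplifying, both collapse to the right-hand side of $(\star)$: the Type 3 coefficient $m+1-l$ and the Type 2 coefficient $m-l$ are each equal to $\lfloor c\rfloor+1$, while $\max\{2\gamma-\beta,0\}$ (Type 3) and $\max\{2\gamma-\beta-1,0\}$ (Type 2) both reduce to $\max\{\gamma-\{c\},0\}$. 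The $m$-range constraints in the definitions of Type 2 and Type 3 hold automatically because $0\le\lfloor c\rfloor\le n-k-1$.

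Granting $(\star)$, monotonicity of $g$ is now routine. On each interval $s\in(l,l+1]$ the numbers $l$ and $\{c\}$ are fixed and $\gamma=s-l$ increases, so $g$ is non-decreasing there; it is constant on $(k,n-c]$. At an integer $s=l+1\le k-1$ one computes $g(l+1)=k(n-k)-(\lfloor c\rfloor+1)(k-l)+(1-\{c\})$ while $\lim_{s\to(l+1)^+}g(s)=k(n-k)-(\lfloor c\rfloor+1)(k-l-1)$, whose difference is $\lfloor c\rfloor+\{c\}=c\ge 0$; and at $s=k$ one has $g(k)=k(n-k)-c\le k(n-k)=\lim_{s\to k^+}g(s)$. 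So $g$ never jumps downward and is therefore non-decreasing on its whole domain, which proves the lemma. The only real work is the verification of $(\star)$: one must keep careful track of how the canonical forms of $a$ and of $s$ are related through $c$, and of which of Type 2 / Type 3 one is in, and then check that the two defining formulas genuinely unify. This bookkeeping is elementary but fiddly; the two degenerate cases and the jump computations are immediate once $(\star)$ is in hand.
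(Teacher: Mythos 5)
Your proof is correct, and it takes a somewhat different route from the paper's. The paper proves the lemma by fixing the type of $(a,s)$ in Definition \ref{defM} and checking each of the four types directly: Types 1 and 4 are immediate, and for Type 2 it computes the effect of a small decrement $\theta$ (first $\theta<\beta$, then $\beta\le\theta<\ga$, where the point crosses into Type 3 with the integer part of $a$ dropping by one) and then appeals to iteration to handle general $\theta$; Type 3 is omitted as ``similar.'' You instead fix $c=a-s$, reduce the statement to monotonicity of $g(s)=\bM(s+c,s;n,k)$, and derive the single closed form $g(s)=k(n-k)-(\lfloor c\rfloor+1)(k-l)+\max\{\ga-\{c\},0\}$ valid on all of $0<s\le k$, which unifies the Type 2 and Type 3 formulas; monotonicity then follows from inspection on each interval $(l,l+1]$ together with the computation that the jump at each integer breakpoint equals $c\ge 0$ (and equals $c$ at $s=k$ as well). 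I checked the fractional-part bookkeeping behind $(\star)$, the $m$-range constraints, and the jump computations, and they are all right. What your version buys is completeness and transparency: it dispenses with the paper's iteration step and the omitted Type 3 case in one stroke, and it exhibits exactly how $\bM$ behaves along lines $a-s=\mathrm{const}$, including the size of each upward jump. The paper's version is shorter to state and needs no fractional-part manipulation, but leaves the gluing across type boundaries for larger $\theta$ to the reader.
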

\begin{proof}
    Let $a=m+\beta,s=l+\ga$ be the canonical expression. We just check four types as in Definition \ref{defM}.

Type 1: If $s>\min\{a,k\}$, then 
\[ \bM(a-\theta,s-\theta;n,k)\le k\le \bM(a,s;n,k). \]

Type 4: If $s\le a-(n-k)$, then we have $\bM(a-\theta,s-\theta;n,k)=\bM(a,s;n,k)=-\infty$.

Type 2: Suppose $(a,s;n,k)$ is of Type 2.

If $\theta<\beta$.
\begin{align*}
    \bM(a-\theta,s-\theta;n,k)=k(n-k)-(m-l)(k-l)+\max\{2(\ga-\theta)-(\beta-\theta+1),0\}\\
    \le k(n-k)-(m-l)(k-l)+\max\{2\ga-(\beta+1),0\}=\bM(a,s;n,k). 
\end{align*}

If $\beta\le \theta<\ga$, then $a-\theta=(m-1)+(1+\beta-\theta), s-\theta=l+(\ga-\theta)$. So, $(a-\theta,s-\theta;n,k)$ is of Type 3. We have the same estimate:
\begin{align*}
    \bM(a-\theta,s-\theta;n,k)=k(n-k)-(m-l)(k-l)+\max\{2(\ga-\theta)-(\beta-\theta+1),0\}\\
    \le k(n-k)-(m-l)(k-l)+\max\{2\ga-(\beta+1),0\}=\bM(a,s;n,k). 
\end{align*}
Note that the integer part $(m,l)$ is reduced to $(n-1,l)$, so we can continue doing the iteration.

Type 3: This is similar to Type 2, so we just omit the proof.
    
\end{proof}

\begin{lemma}\label{easyM}
    Suppose $a-(n-k)<s\le \min\{k,a\}$. Write $a=m+\beta,s=l+\ga$ as in Definition \ref{defM}. Then
    \[ \bM(a,s;n,k)\le k(n-k)-(m-l)(k-l)+\max\{2\ga-(\beta+1),0\}, \]
    and
    \[ \bM(a,s;n,k)\ge k(n-k)-(m+1-l)(k-l)+\max\{2\ga-\beta,0\}. \]
\end{lemma}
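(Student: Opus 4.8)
The plan is to reduce immediately to the two relevant cases in Definition \ref{defM}. Under the hypothesis $a-(n-k)<s\le\min\{k,a\}$, the pair $(a,s;n,k)$ is neither of Type 1 (since $s\le\min\{a,k\}$) nor of Type 4 (since $s>a-(n-k)$), so by the classification of $(a,s)$ into the four types established above it is of Type 2 or of Type 3. In either case, one of the two asserted inequalities is in fact an \emph{equality}, read off directly from the defining formula: the displayed upper bound is exactly the Type 2 formula for $\bM(a,s;n,k)$, and the displayed lower bound is exactly the Type 3 formula. Hence in each case only the other inequality needs an argument.

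For the remaining inequality I would use two elementary facts: first, $k-l\ge 1$, which holds because $s=l+\ga\le k$ with $\ga\in(0,1]$; second, $\max\{x-1,0\}\ge\max\{x,0\}-1$ for every real $x$. In the Type 2 case the lower bound to be shown is equivalent, after cancelling $k(n-k)-(m-l)(k-l)$, to
\[ (k-l)+\max\{2\ga-(\beta+1),0\}\ \ge\ \max\{2\ga-\beta,0\}, \]
and the left side is $\ge 1+\max\{2\ga-(\beta+1),0\}\ge\max\{2\ga-\beta,0\}$ by the second fact together with $k-l\ge 1$. In the Type 3 case the upper bound to be shown is equivalent, after cancelling $k(n-k)-(m+1-l)(k-l)$, to
\[ \max\{2\ga-\beta,0\}-(k-l)\ \le\ \max\{2\ga-(\beta+1),0\}, \]
which again follows from $\max\{2\ga-\beta,0\}-1\le\max\{2\ga-(\beta+1),0\}$ and $k-l\ge 1$.

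There is no substantive difficulty here; the only point requiring care is the bookkeeping of \emph{which} of the two inequalities degenerates to an equality in \emph{which} Type, and confirming that the boundary value $\ga=\beta$ falls into Type 3 (as the classification dictates) so that the lower bound is the one that is tight there. With that settled, both inequalities are one-line consequences of the two elementary facts above, so I would not expect any obstacle beyond this routine casework.
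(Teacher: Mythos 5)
Your proposal is correct and follows the same route as the paper: the hypothesis $a-(n-k)<s\le\min\{k,a\}$ forces Type 2 or Type 3, in each case one displayed bound is the defining formula itself, and the other follows from $k-l\ge 1$ together with $\max\{x-1,0\}\ge\max\{x,0\}-1$. The paper's proof is just a terse version of this same check, so no further comment is needed.
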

\begin{proof}
    The proof is simply by checking when $(a,s;n,k)$ is of Type 2 or Type 3.
\end{proof}

We also show two easy results for the exceptional set estimate.

\begin{lemma}
    Recall Definition \ref{defM}. Suppose that we are in Type 1: $s>\min\{k,a\}$. Then there exists $A\subset \Fp^n$ with $\#A\sim p^a$, such that
    \[ \#E_s(A;n,k)\gtrsim p^{k(n-k)}. \]
\end{lemma}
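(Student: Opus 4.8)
The plan is to make \emph{every} $V\in G(n-k,\Fp^n)$ lie in the exceptional set, by choosing $A$ as small as the constraint $\#A\sim p^a$ permits. The key observation is the crude bound
\[ \#\pi_V^*(A)\le\min\{\#A,\ p^k\}\qquad\text{for all }V\in G(n-k,\Fp^n), \]
valid for any $A$: the map $x\mapsto x+V$ sends $A$ onto $\pi_V^*(A)$, so $\#\pi_V^*(A)\le\#A$; and $V$ has codimension $k$ in $\Fp^n$, so there are exactly $p^k$ cosets of $V$, whence $\#\pi_V^*(A)\le p^k$. Being in Type~1 means $s>\min\{a,k\}$, so once $\#A\sim p^a$ we will have $\min\{\#A,p^k\}<p^s$ (for $p$ large), i.e. $V\in E_s(A;n,k)$ for \emph{every} $V$, and then $E_s(A;n,k)=G(n-k,\Fp^n)$.

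Concretely I would split into two cases. If $a\le k$, pick any $A\subset\Fp^n$ with $\#A=\lceil p^a\rceil$ (possible since $a<n$, and then $\#A\sim p^a$); since $s>a$, we have $\#A<p^s$ for $p$ large, hence $\#\pi_V^*(A)\le\#A<p^s$ for every $V$. If $a>k$, then $s>k$, so $\#\pi_V^*(A)\le p^k<p^s$ for every $V$ and for \emph{any} $A$ whatsoever; take any $A$ with $\#A\sim p^a$ (a product of coordinate intervals, or all of $\Fp^n$ when $a=n$). In either case $E_s(A;n,k)=G(n-k,\Fp^n)$, and since $\#G(n-k,\Fp^n)\sim p^{(n-k)k}=p^{k(n-k)}$ (the Grassmannian count from the start of Section~\ref{sec2}, with $n-k$ in place of $k$), we get $\#E_s(A;n,k)\sim p^{k(n-k)}$, which is the claimed lower bound.

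There is essentially no obstacle here: this is the "do nothing" construction, and the argument uses nothing beyond the definitions of $\pi_V^*$ and $E_s(A;n,k)$ together with the two trivial cardinality bounds on $\pi_V^*(A)$. The only points needing (routine) care are that in the case $a\le k$ one must take $p$ large enough so that $\lceil p^a\rceil<p^s$ (consistent with the ``$p$ large'' hypotheses elsewhere in the paper), and that one reads off $\#G(n-k,\Fp^n)\sim p^{k(n-k)}$ correctly — the exponent $(n-k)\big(n-(n-k)\big)=(n-k)k$ coincides with $k(n-k)$.
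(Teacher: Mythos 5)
Your proof is correct and takes essentially the same route as the paper's: the paper simply says ``choose $A$ to be any set with cardinality $p^a$; one can check by definition that $E_s(A;n,k)=G(k,\Fp^n)$,'' and your two-case argument (using $\#\pi_V^*(A)\le\#A$ when $a\le k$, and $\#\pi_V^*(A)\le p^k$ when $a>k$) is exactly the verification the paper leaves to the reader. One small remark: the paper's displayed identity $E_s(A;n,k)=G(k,\Fp^n)$ is a typo for $G(n-k,\Fp^n)$ (the exceptional set lives in $G(n-k,\Fp^n)$ by definition); since the two Grassmannians have the same cardinality $\sim p^{k(n-k)}$ this is harmless, and your write-up already has the correct Grassmannian.
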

\begin{proof}
    We just choose $A$ to be any set with cardinality $p^a$. One can check by definition that $E_s(A;n,k)=G(k,\Fp^n)$.
\end{proof}

\begin{lemma}
    Recall Definition \ref{defM}. Suppose that we are in Type 4: $s\le a-(n-k)$. Then for any $A\subset \Fp^n$ with $\#A\ge p^a$, we have
    \[ \#E_s(A;n,k)=0. \]
\end{lemma}
\begin{proof}
   If $V\in E_s(A;n,k)$, then 
   \[ p^a=\#A=\sum_{L\in \pi_V^*(A)}\#(L\cap A)\le\#\pi_V^*(A)\cdot \#V<p^{s+n-k}, \]
   a contradiction! Therefore, $E_s(A;n,k)=\emptyset$.
\end{proof}

\

\section{Furstenberg set problem: upper bound}\label{sec3}

In this section, we prove Theorem \ref{upper bound}. In other words, we will construct a $(s,t;n,k)$-set $E=\bigcup_{V\in\cV}Y(V)$, such that
\[ \#E\lesssim_{s,t,n,k} p^{\bF(s,t;n,k)}.\]

\subsection{Two special cases} To familiarize ourselves with $\bF(s,t;n,k)$, we discuss two special cases: $s=k$; $k=1$.

When $s=k$, we can morally view $Y(V)$ as $V$. Therefore, a $(k,t;n,k)$-set is of the form
\[ E=\bigcup_{V\in\cV}V. \]
Estimating the size of $E$ is known as the problem of ``union of $k$ planes". It has been resolved in finite fields by R. Oberlin \cite{oberlin2016unions}. The Euclidean version was resolved by the author in \cite{gan2023hausdorff}.

When $k=1$, this is the Furstenberg set problem for lines. To construct a $(s,t;n,1)$-set 
\[ E=\bigcup_{L\in\cL}Y(L) \]
with size as small as possible, we hope that the lines in $\cL$ are compressed in a low-dimensional space. 
Write $t=2m+\beta$, where $0\le m\le n-2$ is an integer, and $\beta\in (0,2]$. Since $t\le 2(m+1)$, we can make $\cL$ contained in $\Fp^{m+2}$. Therefore, our goal becomes to construct a small $(s,t;m+2,1)$-set. The trick is to take the Cartesian product of a $(s,\beta;2,1)$-set with $\Fp^m$. Let $E'\subset \Fp^2$ be a $(s,\beta;2,1)$-set and let $E'$ have the form
\[ E'=\bigcup_{L'\in \cL'}Y(L'). \]
We will construct a $(s,t;m+2,1)$-set $E$ based on $E'$. Let $\pi_{\Fp^2}:\Fp^{m+2}\rightarrow \Fp^2$ be the projection onto the first two coordinates. Define $\cL$ to be the set of lines $L$ in $\Fp^{m+2}$ such that $\pi_{\Fp^2}(L)$ is a line in $\cL'$. Then, $\#\cL\sim \# G(1,\Fp^{m+1})\cdot \#\cL'\sim p^{2m+\beta}=p^t$. If $L\in \cL$, then $L'=\pi_{\Fp^2}(L)\in \cL'$. We define $Y(L):=L\cap \pi_{\Fp^2}^{-1}(Y(L'))$. We see that $\#Y(L)=\#Y(L')\sim p^s$, so 
\[ E=\bigcup_{L\in\cL}Y(L) \]
is a $(s,t;m+2,1)$-set.
Also, since $E\subset E'\times \Fp^m$, we have
\[ \#E\le\#E'\cdot p^m\lesssim p^{m+\min\{s+\beta,\frac32s+\frac12\beta,s+1\}}  \]
By checking Definition \ref{deffurindex}, one exactly sees that
    \[ m+\min\{s+\beta,\frac32s+\frac12\beta,s+1\}=\bF(s,t;m+2,1)=\bF(s,t;n,1). \]

\subsection{Proof of Theorem \ref{upper bound}}\hfill

For admissible $(s,t;n,k)$, we will construct a $(s,t;n,k)$-set 
\[ E=\bigcup_{V\in\cV}Y(V) \]
such that
\[ \#E\gtrsim p^{\bF(s,t;n,k)}. \]

\fbox{Case (a): \texorpdfstring{$s=0$}{}}

If $t\le k(n-k)$, we choose $\cV$ to be $p^t$ $k$-planes pass through the origin $\bzz^n$, and $Y(V)=\{\bzz^n\}$. Then $\#E\le 1=p^0$. Here, $\bzz^n$ means $(\underbrace{0,\dots,0}_{n \textup{~times~}})$.

If $t\ge k(n-k)$, we choose $p^{t-k(n-k)}$ points $X$ in $\Fp^{n-k}$. For each such point $x\in X$, we choose $\sim p^{k(n-k)}$ $k$-planes that pass through $x$, and require the $k$-planes to be transverse to $\Fp^{n-k}$. We obtain $\sim p^t$ $k$-planes which is $\cV$. If $V$ passes through $x\in X$, we set $Y(V)=\{x\}$. We see that $\#E=\#X\le p^{t-k(n-k)}$.

\bigskip

\fbox{Case (b): $s>0, t\in [0,(k-d-1)(n-k)]$}

We write $s=d+\si$, where $d\in \{0,\dots,k-1\}, \si\in(0,1]$.
The construction is similar to Case (a). We choose $\sim p^t$ many $k$-planes that contain $\Fp^{d+1}$. This is doable since 
\[ \#\{ V\in A(k,\Fp^n): V\supset L \}=\#G(k-d-1,\Fp^{n-d-1})\sim p^{(k-d-1)(n-k)}. \]
Let $\cV$ be these $k$-planes. Fix $Y\subset \Fp^{d+1}$ with $\#Y=p^s$. For each $V\in \cV$, let $Y(V)=Y$. We see that $\#E=\#Y=p^s$.

\bigskip

\fbox{Case (c): $s\in (0,k], t\in ((k-d-1)(n-k),(k+1)(n-k)]$}

We write 
\begin{equation}\label{stformula}
    \begin{cases}
        s=d+\si\\
        t=(k-d-1)(n-k)+(d+2)m+\tau,
    \end{cases}
\end{equation} 
    where $d\in\{0,\dots,k-1\}$, $\si\in (0,1]$,  $m\in\{0,\dots,n-k-1\}$, $\tau\in [0,2]$. We remark that here we allow $\tau=0$, although we assumed $\tau>$ in the canonical expression. Allowing $\tau=0$ will produce the example in Case (d).

We write $\Fp^n=\Fp^{n-k+d+1}\times \F_p^{k-d-1}$. Note that $\gtrsim 1$ fraction of $k$-planes intersect $\Fp^{n-k+d+1}$ at a $(d+1)$-plane. Heuristically speaking, to make $\#E$ as small as possible, we would like $E$ to be a subset of $\F_p^{n-k+d+1}\times \bzz^{k-d-1}$. For simplicity, we write $\F_p^{n-k+d+1}\times \bzz^{k-d-1}$ as $\Fp^{n-k+d+1}$, thinking of it as a subspace of $\Fp^n$. This will not cause any ambiguity.

Note that for each $W\in A(d+1,\F_p^{n-k+d+1})$, the cardinality of 
\[\{V\in A(k,\Fp^n): V\supset W,  V \textup{~is~transverse~to~} \F_p^{n-k+d+1} \}\] 
is $\sim \#G(k-d-1,\Fp^{n-k})\sim p^{(k-d-1)(n-k)}$. We just need to find $\cW\subset A(d+1,\F_p^{n-k+d+1})$ with $\#\cW\sim p^{t-(k-d-1)(n-k)}=p^{(d+2)m+\tau}$, and for each $W\in\cW$, to find $Y(W)\subset W$ with $\#Y(W)\sim p^{s}$, such that
\begin{equation}\label{constructW}
    \#\bigcup_{W\in\cW} Y(W)\lesssim p^{\bF(s,t;n,k)}.
\end{equation}

If this is done, then we let $\cV$ be those $k$-planes $V$ that contain some $W\in\cW$, and $Y(V)=Y(W)$ if $V\supset W$. We see that $\#\cV\sim \#\{W\}\cdot p^{(k-d-1)(n-k)}\sim p^t$, and hence $E=\bigcup_{V\in\cV} Y(V)$ is a $(s,t;n,k)$-set. Also, we have the upper bound
\[ \#E=\#\bigcup_{W\in\cW} Y(W)\lesssim p^{\bF(s,t;n,k)}. \]

It suffices to find a $(s,(d+2)m+\tau;n-k+d+1,d+1)$-set $E_1=\bigcup_{W\in\cW} Y(W)$ such that \eqref{constructW} holds. 
Note that $\#A(d+1,\Fp^{m+d+2})\sim p^{(d+2)(m+1)}$ and $(d+2)m+\tau\le (d+2)(m+1)$. We will construct the $(d+1)$-planes $\cW$ so that they lie in $\Fp^{m+d+2}$. 
It suffices to find a $(s,(d+2)m+\tau;m+d+2,d+1)$-set $E_1=\bigcup_{W\in\cW} Y(W)$ such that \eqref{constructW} holds.

We write $\Fp^{m+d+2}=\Fp^{d+2}\times\Fp^{m}=\Fp^2\times \Fp^d\times \Fp^m$. We first choose a $(\si,\tau;2,1)$-set $E_2=\bigcup_{L\in\cL}Y(L)$ in $\Fp^2$, such that
\[\#E_2\lesssim p^{\min\{\si+\tau,\frac32\si+\frac12\tau,\si+1\}}. \]
The existence of $E_2$ was shown in Proposition \ref{prop2d}.

Next, we define $\cU:=\{L\times \Fp^d: L\in\cL\}$ which is a subset of $A(d+1,\Fp^{d+2})$. And for each $U=L\times \Fp^d\in\cU$, define $Y(U)=Y(L)\times \Fp^d$. We see that $\#\cU= \#\cL\sim p^\tau$, $\#Y(U)=\#Y(L)\cdot p^d\sim p^{\si+d}=p^s$. We also see that $E_3:=\bigcup_{U\in \cU}Y(U)$ is contained in $E_2\times \Fp^d$, and hence \[\#E_3\lesssim p^{d+\min\{\si+\tau,\frac32\si+\frac12\tau,\si+1\}}.\]

Finally, we define $\cW$. For each $U\in \cU$, consider the $(m+d+1)$-plane $U\times \Fp^m$.
Let $\pi_U:U\times \Fp^m\rightarrow U$ be the projection (naturally defined). We let $\cW_U$ be the set of $(d+1)$-planes $W$ contained in $U\times \Fp^m$ such that $\pi_U(W)=U$. Note that $\gtrsim 1$ fraction of the $(d+1)$-planes in  $U\times \Fp^m$ lie in $\cW_U$, so $\#\cW_U\sim \#A(d+1,\Fp^{m+d+1})\sim p^{(d+2)m}$.

We define $\cW=\bigcup_{U\in \cU}\cW_U$ (actually this is a disjoint union). For $W\in \cW_U$, define $Y(W)=W\cap \pi_U^{-1}(Y(U))$. We see that $\#\cW\sim \#\cU\cdot p^{(d+2)m}\sim p^{(d+2)m+\tau}$, and $\#Y(W)=\#Y(U)\sim p^s$. Hence, $E_1=\bigcup_{W\in\cW}Y(W)$ is a $(s,(d+2)m+\tau;m+d+2,d+1)$-set. Since $E_1\subset E_3\times \Fp^m$, we have
\[ \#E_1\le \#E_3\cdot p^m\lesssim p^{d+m+\min\{\si+\tau,\frac32\si+\frac12\tau,\si+1\}}. \]
The construction is done.

\bigskip
\fbox{Case (d)}

We still use \eqref{stformula}, but the range of $\tau$ is $\tau\in(2,d+2]$. Note that the worst case is when $\tau=d+2$.
What we need to do is for
\begin{equation}
    \begin{cases}
        s=d+\si\\
        t=(k-d-1)(n-k)+(d+2)(m+1),
    \end{cases}
\end{equation} 
construct a $(s,t;n,k)$-set $E$ such that
\[ \#E\lesssim p^{s+m+1}. \]
The trick is to still use the construction in Case (c), with $\tau$ replaced by $0$ and $m$ replaced by $m+1$. Therefore, we can construct a $(s,t;n,k)$-set $E$ such that
\[\#E\lesssim p^{d+m+1+\min\{\si+0,\frac32\si+\frac120,\si+1\}=p^{d+m+1+\si}=p^{s+m+1}}. \]
\

\section{Furstenberg set problem: Lower bound I}\label{sec4}

To become more familiar with the problem, we first prove two special cases of Theorem \ref{lower bound}: $s=0$; $t=0$.

\begin{proposition}\label{s0}
    If $E$ is a $(0,t;n,k)$-set, then
    \[ \# E\gtrsim p^{\bF(0,t;n,k)}=p^{\max\{0,t-k(n-k)\}}. \]
\end{proposition}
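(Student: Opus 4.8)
The plan is to split along the two regimes in $\bF(0,t;n,k)=\max\{0,\,t-k(n-k)\}$; in each regime the argument is completely elementary, and in particular it does not invoke Conjecture~\ref{conj2d}, so this serves as the unconditional $s=0$ base case of the induction in Theorem~\ref{lower bound}.

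\emph{Regime $t\le k(n-k)$.} Here the claimed bound is merely $\#E\gtrsim 1$. Since $\#\cV\ge \lam p^t\ge 1$ for $p$ large and each $Y(V)$ satisfies $\#Y(V)\ge \lam p^0=\lam>0$ and hence is nonempty, $E$ contains at least one point, so $\#E\ge 1=p^0=p^{\bF(0,t;n,k)}$.

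\emph{Regime $t>k(n-k)$.} Now one must show $\#E\gtrsim p^{t-k(n-k)}$, and the mechanism is a double-counting of incidences. For each $V\in\cV$ pick a witness $x_V\in Y(V)\subset V\cap E$ (possible since $Y(V)\ne\emptyset$); this defines a map $\cV\to E$, $V\mapsto x_V$. The crucial input is that a fixed point of $\Fp^n$ lies on exactly $\#G(k,\Fp^n)\sim p^{k(n-k)}$ affine $k$-planes — the affine $k$-planes through $x$ are precisely $x+U$ for $U\in G(k,\Fp^n)$, so one translates $x$ to $\bzz^n$ and uses $\#G(k,\Fp^n)\sim p^{k(n-k)}$ from the preliminary section. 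Consequently every fiber of $V\mapsto x_V$ has size $\lesssim p^{k(n-k)}$, so
\[ \lam p^t\le \#\cV=\sum_{x\in E}\#\{V\in\cV:\ x_V=x\}\le \#E\cdot \max_{x\in\Fp^n}\#\{V\in A(k,\Fp^n):\ x\in V\}\lesssim \#E\cdot p^{k(n-k)}, \]
and rearranging gives $\#E\gtrsim p^{t-k(n-k)}=p^{\bF(0,t;n,k)}$.

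I do not anticipate a genuine obstacle here. The one point needing a word of justification is the count of affine $k$-planes through a fixed point, which is immediate from $\#G(k,\Fp^n)\sim p^{k(n-k)}$; the constant $\lam$ is harmless since it is a fixed constant in $(0,1]$; and the only mildly delicate convention is reading ``$\#Y(V)\ge\lam p^s$'' with $s=0$ as ``$Y(V)$ is nonempty'', which is legitimate because $\#Y(V)$ is an integer and $\lam>0$.
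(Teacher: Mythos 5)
Your proof is correct and is essentially the same argument as the paper's: both rest on the double count that a fixed point of $\Fp^n$ lies on only $\#G(k,\Fp^n)\sim p^{k(n-k)}$ affine $k$-planes, giving $\#\cV\lesssim \#E\cdot p^{k(n-k)}$, together with the trivial bound $\#E\ge 1$. Choosing a single witness $x_V\in Y(V)$ instead of summing $\#Y(V)$ over $\cV$ is an immaterial repackaging since $s=0$.
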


\begin{proof}
    Suppose $E=\bigcup_{V\in\cV}Y(V)$. Note that each $x\in E$ can lie in at most $\#G(k,\Fp^n)\sim p^{k(n-k)}$ different $V$. We have
    \[ \#E\cdot p^{k(n-k)}\ge \sum_{V\in\cV}\#Y(V)\gtrsim p^t. \]
    Hence, $\#E\gtrsim p^{t-k(n-k)}$. Also, trivially, $\#E\ge 1$.
\end{proof}

\begin{proposition}\label{t0}
    If $E$ is a $(s,0;n,k)$-set, then
    \[ \# E\gtrsim p^{\bF(s,0;n,k)}=p^{s}. \]
\end{proposition}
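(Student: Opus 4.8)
The plan is to observe that this statement is essentially immediate from unwinding the definition of a Furstenberg set. First I would confirm the asserted value $\bF(s,0;n,k)=s$ against Definition \ref{deffurindex}: if $s=0$ then $\bF(0,0;n,k)=\max\{0,-k(n-k)\}=0=s$; if $s>0$, write $s=d+\si$ with $d\in\{0,\dots,k-1\}$ and $\si\in(0,1]$ (note $d\le k-1$ since $\si>0$), and observe that $t=0$ lies in the interval $[0,(k-d-1)(n-k)]$ because $k-d-1\ge 0$. Hence Case (b) of Definition \ref{deffurindex} applies and gives $\bF(s,0;n,k)=s$.

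For the lower bound itself, I would simply invoke the definition of $(s,0;n,k)$-set. Such a set has the form $E=\bigcup_{V\in\cV}Y(V)$ where $\cV\subset A(k,\Fp^n)$ satisfies $\#\cV\ge \lam p^{0}=\lam>0$, so $\cV$ is nonempty; pick any $V_0\in\cV$. Since $Y(V_0)\subset V_0$ with $\#Y(V_0)\ge \lam p^{s}$, and $E\supseteq Y(V_0)$, we get $\#E\ge \#Y(V_0)\ge \lam p^{s}\gtrsim p^{s}$, which is what we wanted.

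There is no real obstacle here: the only points requiring (mild) care are matching the piecewise formula for $\bF$ with the value $s$ in the degenerate regime $t=0$, and noting that the hypothesis $\#\cV\ge\lam p^{t}$ forces $\cV\neq\emptyset$ when $t=0$, so that a single plane $V_0$ already exhibits the claimed lower bound.
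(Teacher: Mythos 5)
Your proof is correct and is essentially the paper's own one-line argument: a $(s,0;n,k)$-set contains some $Y(V_0)$ with $\#Y(V_0)\gtrsim p^s$, so $\#E\gtrsim p^s$; the extra verification that Definition \ref{deffurindex} gives $\bF(s,0;n,k)=s$ is a harmless (and correct) addition.
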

\begin{proof}
    By the definition of $(s,0;n,k)$-set,
    $\#E\ge \#Y(V)\gtrsim p^s$.
\end{proof}

In this section, we prove Theorem \ref{lower bound} for $(n,k)=(k+1,k)$. The proof is by induction on $k$. The goal of this section is to prove the following result.

\begin{proposition}\label{proplower1}
    Let $k\in \N$ and $k\ge 2$.
    Suppose for admissible $(s,t;k,k-1)$,  we know
    \begin{equation}\label{conditionprop}
        \inf_{E:\ (s,t;k,k-1)\textup{-set}}\#E\gtrsim_\e p^{\bF(s,t;k,k-1)-\e}
    \end{equation} 
    for any $\e>0$.
    Then for admissible $(s,t;k+1,k)$, we have
    \begin{equation}
        \inf_{E:\ (s,t;k+1,k)\textup{-set}}\#E\gtrsim_\e p^{\bF(s,t;k+1,k)-\e},
    \end{equation} 
    for any $\e>0$.
\end{proposition}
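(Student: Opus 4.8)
The statement is the inductive step $(k,k-1)\Rightarrow(k+1,k)$; combined with the base case $(2,1)$, which is Conjecture \ref{conj2d}, and iterated, it gives Theorem \ref{lower bound} on the diagonal $n=k+1$. So I would fix an admissible $(s,t;k+1,k)$ and a $(s,t;k+1,k;\lam)$-set $E=\bigcup_{V\in\cV}Y(V)\subset\Fp^{k+1}$, write $\Fp^{k+1}=\Fp^k\times\Fp$ with $\pi$ the projection onto the first factor and $\be$ the fibre direction, and split $\cV$ by whether the linear part of $V$ contains $\be$. If it does, $V=W\times\Fp$ is a cylinder over a $(k-1)$-plane $W=\pi(V)\in A(k-1,\Fp^k)$; there are only $\sim p^k$ cylinders, so this branch forces $t\le k$. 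Otherwise $V=\{(x,\ell_V(x)):x\in\Fp^k\}$ is a graph of an affine map and $\pi|_V$ is a bijection. One of the two families carries $\gtrsim p^t$ of the planes.

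In the cylinder branch the argument closely follows the $(3,1)$-from-$(2,1)$ sketch of the introduction, with $\Fp^2$ replaced by $\Fp^k$: the cylinders $W\times\Fp$ project to $(k-1)$-planes $W$ in $\Fp^k$, and after pigeonholing on the ``column'' heights of the $Y(V)$'s one extracts a $(s_1,t_1;k,k-1)$-set (the union of the heavy columns, one $(k-1)$-plane per cylinder) together with a transversal exponent $s_2$, with constraints $s_1+s_2\ge s$, $t_1\le t\le k$, $s_1\le k-1$, $0\le s_2\le 1$ and the ``strong'' conditions $s_1\ge s$, $t_1\ge\max\{0,t-k\}$; applying \eqref{conditionprop} and stacking the heavy fibres over a consistent column gives $\#E\gtrsim_\e p^{\bF(s_1,t_1;k,k-1)+s_2-\e}$. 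The graph branch is the substantive one, and is not a direct translation of the $(3,1)$ sketch, because a $k$-plane projects \emph{onto} $\Fp^k$. Here I would instead slice the domain: choose a hyperplane $\Fp^{k-1}\times\Fp\subset\Fp^k\times\Fp$ so that, after pigeonholing on which translate to take and on heavy $x_{k+1}$-fibres, each $V\cap(\Fp^{k-1}\times\Fp)$ is a graph over $\Fp^{k-1}$ meeting $Y(V)$ in $\gtrsim p^{s-1}$ points; this yields a $(s_1,t_1;k,k-1)$-set inside that hyperplane, after which one recovers the two transversal directions (the discarded $\Fp^1$ of the domain, and the fact that up to $p$ graphs share a slice) by further pigeonholing --- the heavy-fibre ``stacking'' step applied twice. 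In all cases I expect a bound of the form $\#E\gtrsim_\e p^{\Phi(s_1,s_2,t_1,t_2)-\e}$ with $\Phi$ explicitly built from $\bF(\,\cdot\,;k,k-1)$.

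It then remains to verify the linear-programming inequality $\Phi(s_1,s_2,t_1,t_2)\ge\bF(s,t;k+1,k)$ over the region cut out by the constraints above, piece by piece against the formula in Definition \ref{deffurindex}; Lemmas \ref{lipint}, \ref{lipins}, \ref{easybound} are the tools that make the boundary cases match up. I expect the main obstacles to be (i) arranging the pigeonholing, especially in the graph branch, so that it genuinely produces the ``strong'' constraints ($s_1\ge s$ rather than merely $s_1\ge s-1$, and the correct relation between $t_1+t_2$ and $t$) and no branch is fatally lossy, and (ii) the brute-force case check of the final inequality; the structural part should be routine once the right five exponents are isolated.
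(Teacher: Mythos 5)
Your decomposition into vertical cylinders and graphs is not how the paper proceeds (it partitions all non-horizontal $V$ by the direction $W$ of $V\cap(\Fp^k\times\{0\})$ and treats them uniformly), and the branch you yourself identify as substantive --- the graph branch, which is unavoidable since $\sim p^{k+1}$ of the $k$-planes are graphs and $t$ may be as large as $k+1$ --- is only a placeholder, not an argument. Fixing one heavy translate of $\Fp^{k-1}\times\Fp$ loses a factor $p$ in the number of planes and a factor $p$ in $s$ (and for $s<1$ the ``$\gtrsim p^{s-1}$ points per slice'' bookkeeping already fails), and ``recovering the two transversal directions by the stacking step applied twice'' is exactly the delicate point, not a routine pigeonhole: the recovered exponents must come with quantitative constraints strong enough for the final linear program, and you never derive them. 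Symptomatically, your sketch never invokes any two-dimensional Furstenberg estimate, whereas in the paper's proof (Lemma \ref{lempig1}) the essential gain enters precisely there: for each $W$ one slices every $Y(V)$ along translates of $W$ inside $V$, the slice positions form a $(s_1,t_2;2,1)$-set $\wt E_W$ in a $2$-plane transverse to $W$, and Conjecture \ref{conj2d} (deduced from \eqref{conditionprop}) is applied to it through the quasi-product extraction of Lemma \ref{lemuv}, yielding $u+v\ge \bF(s_1,t_2;2,1)$ together with the nontrivial condition $u\ge s_1$, which requires the removal-and-repeat algorithm; the $(k,k-1)$ hypothesis is then applied to the horizontal slices $E'_\xi$ with a careful accounting of the multiplicities $t_\xi$, and the resulting six-variable inequality is Lemma \ref{recursionF}. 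A bound built only from $\bF(\cdot,\cdot;k,k-1)$ plus trivial stacking, with unspecified constraints, is not known (and not shown by you) to reach $\bF(s,t;k+1,k)$.

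Two further concrete problems. First, the ``strong condition'' $s_1\ge s$ you assert in the cylinder branch is false as stated: for a cylinder $V=W\times\Fp$ take $Y(V)=A\times\Fp$ with $\#A=p^{s-1}$; every pigeonholing gives $s_1=s-1$, $s_2=1$. The correct analogue in the paper is $u\ge s_1$ (with $s_1\le 1$ the number of $W$-slices of $Y(V)$), and it holds only because each line $\ell_V$ meets each vertical fibre of the transverse $2$-plane in at most one point, which is what makes the exhaustion in Lemma \ref{lemuv} work --- you have transplanted the condition onto the wrong exponents. Second, the $(3,1)$-from-$(2,1)$ sketch in the introduction is the model for the step $n\ge k+2$ (Proposition \ref{proplower2}), not for this hyperplane step: in your cylinder branch each fibre $W\times\Fp$ contains exactly one plane of the family, so there is no lower-dimensional Furstenberg structure inside the fibre to which \eqref{conditionprop} could be applied, another sign that the pattern-match is off. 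As it stands the proposal isolates some plausible exponents but is missing the paper's two key mechanisms (the transverse $2$-dimensional estimate with $u\ge s_1$, and the slicewise application of the inductive hypothesis with the $t_\xi$ accounting) and the explicit recursive inequality, so it does not constitute a proof.
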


We mention a technical thing in the proof.
For $\eta>0$, denote $\eta\Z:=\{\eta n: n\in\Z\}$.
 In Theorem \ref{lower bound}, the implicit constant in the inequality also depends on $s,t$, and therefore may not be uniformly bounded. To get rid of it, we choose a small $\eta>0$, and discretize $s,t$, so that $s,t\in \eta\Z$. Then we obtain the bound
\[ \#E\gtrsim_{\e,\eta,n,k}p^{\bF(s-\eta,t-\eta;n,k)-\e}, \]
where the implicit constant is independent of $s,t$.

\begin{lemma}\label{lempig1}
Assume \eqref{conditionprop} holds.
Suppose $(s,t;k+1,k)$ is admissible. Fix $0<\e<1/100$.
    Then for any $(s,t;k+1,k)$-set $E$, there exist numbers $t_1,t_2,s_1,s_2,u,v$, such that
    \begin{equation}
        \begin{cases}
            1\le p^{t_1}\le p^{k-1}, 1\le p^{t_2}\le p^2;\\
            1\le p^{s_1}\le p, 1\le p^{s_2}\le p^s;\\
            p^{s_1}\lesssim p^u\le p, 1\le p^v\le p;
        \end{cases}
    \end{equation}
    and
    \begin{equation}
        \begin{cases}
            p^{t_1+t_2}\gtrapprox p^{t};\\
            p^{s_1+s_2}\gtrapprox p^{s};\\
            p^{u+v}\gtrsim_\e p^{\bF((s_1-\e^2)_+,(t_2-\e^2)_+;2,1)-\e};
        \end{cases}
    \end{equation}
    and
    \begin{equation}\label{lempig1ineq}
        \#E \gtrsim_\e p^{u+\max\{ \bF((s_2-\e^2)_+,(t_1+v-\e^2)_+;k,k-1),s_2+v \}-\e}.
    \end{equation}
Here, $(x)_+=\max\{0,x\}$. In addition, the implicit constants do not depend on $t_1,t_2,s_1,s_2,u,v$.
\end{lemma}

\begin{proof}
    We will do several steps of pigeonholings, which lead to the parameters $t_1,t_2,s_1,s_2,u,v$.

    By the definition of $(s,t;k+1,k)$-set, we write
    \[ E=\bigcup_{V\in\cV}Y(V), \]
    where $\cV\subset A(k,\Fp^{k+1})$ with $\#\cV\sim p^t$, and $Y(V)\subset V$ with $\#Y(V)\sim p^s$.

    The strategy is to find a subset of $\cV$, and a subset of $Y(V)$ for each $V$, still denoted by $\cV$, $Y(V)$, so that the new $\cV, Y(V)$ satisfy certain uniformity. This is done by a sequence of pigeonhole arguments.

    By properly choosing the coordinate, we can assume $\gtrsim 1$ fraction of the $k$-planes in $\cV$ are not parallel to $\Fp^k$. Throwing away some $V\in\cV$, we can just assume all the $k$-planes in $\cV$ are not parallel to $\Fp^k$.

    Now, we partition $\cV$ according to $G(k-1,\Fp^k\times \bzz)$. Since each $V\in \cV$ is not parallel to $\Fp^k$ and the ambient space is $\Fp^{k+1}$, $V$ must intersect $\Fp^k\times \bzz$ at a $(k-1)$-plane. For each $W\in G(k-1,\Fp^k\times \bzz)$, we define $\cV_W$ to be the set of $V\in\cV$ such that $V\cap \Fp^k$ is parallel to $W$. We obtain a partition
    \[ \cV=\bigsqcup_{W\in G(k-1,\Fp^k\times \bzz)}\cV_W. \]

    \textit{First pigeonholing.} We assume that there exists $\cW\subset G(k-1,\Fp^k\times \bzz)$ with $\#\cW\sim p^{t_1}$ such that for each $W\in\cW$, $\#\cV_W\sim p^{t_2}$. Here, $1\le p^{t_1}\le p^{k-1},1\le p^{t_2}\le p^{2}$ are dyadic numbers, and 
    \begin{equation}
        p^{t_1}p^{t_2}\gtrapprox p^t.
    \end{equation}
     (Recall that $A\gtrapprox B$ means $A\gtrsim (\log p)^C B$.) We still use $\cV$ to denote $\bigcup_{W\in\cW} \cV_W$. And we see that
     \[ \#\cV\gtrapprox p^t. \]

    \textit{Second pigeonholing.} By properly choosing the coordinate and throwing away $\lesssim 1$ fraction of $W$ in $\cW$, we can assume that each $W$ in $\cW$ satisfies $W\cap (\Fp\times \bzz^k)=\bzz^{k+1}$. Denote $\wt{\Fp^2}=\Fp\times \bzz^{k-1}\times \Fp$. If $V\in\cV_W$ and $W\in \cW$, then $V\cap \wt{\Fp^2}$ is a line. We denote it by $\ell_V:=V\cap \wt{\Fp^2}$. Note that $\ell_V$ is not parallel to the first coordinate in $\wt{\Fp^2}$.
    
    For a fixed $V\in\cV_W$, we do a Fubini-type argument. Write 
    \[V=\bigsqcup_{x\in \ell_V} (x+W).\] 
    Noting that $\#Y(V)\sim p^s$, by pigeonholing, we find a subset $Y(\ell_V)\subset \ell_V$, so that $\#Y(\ell_V)\sim s_1(V)$, and for each $x\in Y(\ell_V)$, $\#Y(V)\cap (x+W)\sim s_2(V)$. Here, $1\le p^{s_1(V)}\le p, 1\le p^{s_2(V)}\le p^s$ are dyadic numbers that satisfy
    \[ p^{s_1(V)}\cdot p^{s_2(V)}\gtrapprox p^s. \]
    By throwing away some horizontal slices of $Y(V)$, we can assume
    \[ Y(V)=\bigsqcup_{x\in Y(\ell_V)}\bigg(Y(V)\cap(x+W)\bigg). \]
    For the new $Y(V)$, we still have
    \[ \#Y(V)\gtrapprox p^s. \]
For convenience, given $V\in\cV_W$ and $x\in\ell_V$, we denote
\[ Y(V,x):=Y(V)\cap (x+W). \]
$Y(V,x)$ is called the horizontal $x$-slice. (Note that there is no confusion since $V$ uniquely determines $W$.)  
Thus, we have
\begin{equation}
    Y(V)=\bigsqcup_{x\in Y(\ell_V)}Y(V,x).
\end{equation}
    
    \textit{Third pigeonholing.} For each $W\in \cW$, we can find dyadic numbers $p^{s_1(W)}, p^{s_2(W)}$ so that $\gtrapprox 1$ fraction of $V$ in $\cV(W)$ satisfy $p^{s_1(V)}=p^{s_1(W)}, p^{s_2(V)}=p^{s_2(W)}$.

    \textit{Fourth pigeonholing.} There exist dyadic numbers $p^{s_1},p^{s_2}$ such that $\gtrapprox 1$ fraction of $W$ in $\cW$ satisfy $p^{s_1(W)}=p^{s_1}$ and $p^{s_2(W)}=p^{s_2}$.

    \medskip 

    We summarize what we have done so far.
    We can redefine $\cW$, $\cV_W$ and $\cV$, so that $\cV=\bigcup_{W\in\cW}\cV_W$ and the following uniformity hold. 
    \[ \#\cW\gtrapprox p^{t_1}. \]
    \[ \#\cV_W\gtrapprox p^{t_2}, \textup{~for~}W\in\cW. \]
    For $W\in\cW$ and each $V\in\cV_W$,
    \begin{equation}\label{YV}
        Y(V)=\bigsqcup_{x\in Y(\ell_V)}Y(V,x), 
    \end{equation} 
    with $\#Y(\ell_V)\sim p^{s_1}$ and for each $x\in Y(\ell_V)$, $\# Y(V,x)\sim p^{s_2}$.

    Fix a $W\in\cW$. We want to estimate $\bigcup_{V\in\cV_W} Y(\ell_V)(\subset \wt{\Fp^2})$. This is some sort of Furstenberg set in two-dimensional space. Also, an important property is that each $\ell_V$ is not parallel to the first coordinate. We will exploit some ``quasi-product" structure of this set. 

    \begin{lemma}\label{lemuv}
        Assume Conjecture \ref{conj2d} holds.
        Suppose $E=\bigcup_{\ell\in\cL} Y(\ell)$ is a $(s_1,t_2;2,1)$-set. In addition, we assume that each $\ell\in\cL$ is not parallel to $\Fp\times \bzz$. Then there exist dyadic numbers $p^u,p^v$ such that
        \[ p^{s_1}\lesssim p^u\lesssim p,  \]
        \[ 1\le p^v\le p, \]
         and
        \[ p^up^v\gtrsim_\e p^{\bF(s_1,t_2;2,1)-\e}. \]
        for any $\e>0$.
        In addition, $E$ contains a subset of the form
        \[ \bigsqcup_{\xi\in\Xi}Y_\xi, \]
        where $\Xi\subset \bzz\times\Fp$ with $\#\Xi\sim p^u$, and $Y_\xi\subset E\cap\bigg(\xi+(\Fp\times \bzz)\bigg)$ with $\#Y_\xi\sim p^v$.
    \end{lemma}

    \begin{proof}
        For $\xi\in \bzz\times \Fp$, define $E(\xi):=E\cap\bigg(\xi+(\Fp\times \bzz)\bigg)$.
        By pigeonholing on horizontal slices of $E$, we can find $\Xi\subset \bzz\times \Fp$ so that for all $\xi\in\Xi$, $E(\xi)$ have comparable cardinality. We may denote $\#\Xi\sim p^u, \#E(\xi)\sim p^v$ for two dyadic numbers $p^u,p^v\ge 1$. We can also assume $p^u p^v\gtrapprox \#E$. Hence, assuming Conjecture \ref{conj2d}, we have
        \[ p^u p^v \gtrsim_\e p^{\bF(s_1,t_2;2,1)-\e}. \]
        However, we still need the condition $p^u\gtrsim p^{s_1}$. This is handled by the following trick. We will perform an algorithm to find a sequence $E=E_1\supset E_2\supset\cdots\supset E_m$, and disjoint subsets $\Xi_1,\Xi_2\cdots,\Xi_m$ of $\bzz\times \Fp$.
        To start with, let $E_1=E$. By pigeonholing, we can find $\Xi_1\subset \bzz\times \Fp$, such that $\#\Xi_1\sim p^{u_1}$, $\#E(\xi)\sim p^{v_1}$ for each $\xi\in\Xi$, and
        \[ p^{u_1}p^{v_1}\gtrapprox \#E_1\gtrsim_\e p^{\bF(s_1,t_2;2,1)-\e}. \]
        Suppose that we have defined $E_{i}, \Xi_{i}$ for $1\le i\le j-1$. If $\sum_{i=1}^{j-1}p^{u_i}\ll p^{s_1}$, then we define
        \[ E_j:=E\setminus \bigcup_{1\le i\le j-1}\bigsqcup_{\xi\in\Xi_{i}}E(\xi). \]
        Since each $\ell\in\cL$ is not parallel to $\Fp\times\bzz$, we see that $E_j$ is still a $(s_1,t_2;2,1)$-set. By the same pigeonholing, we find $\Xi_j\subset (\bzz\times \Fp)\setminus \bigcup_{1\le i\le j-1}\Xi_i$, so that $\#\Xi_j\sim p^{u_j}$, $\#E(\xi)\sim p^{v_j}$ for each $\xi\in\Xi_j$, and
        \[  p^{u_j}p^{v_j}\gtrsim_\e p^{\bF(s_1,t_2;2,1)-\e}. \]
        If $\sum_{i=1}^{j-1}p^{u_i}\gtrsim p^{s_1}$, then we stop and let $m=j-1$.

        The algorithm will stop in finite steps. Denote $\Xi:=\bigcup_{1\le i\le m}\Xi_i$. Let $p^u,p^v$ be dyadic numbers so that $p^v=\min_{1\le i\le m}p^{v_i}$, $p^u\sim\#\Xi$. For each $\xi\in\Xi$, let $Y_\xi$ be a subset of $E(\xi)$ with $\#Y_\xi\sim p^v$. This is doable, since $\#E(\xi)\gtrsim p^v$ if $\xi\in\Xi$. One can check
        \[ p^u p^v\gtrsim_\e p^{\bF(s_1,t_2;2,1)-\e}. \]
        This is proved in the following way. Suppose $p^v=p^{v_i}$, then
        \[ p^up^v=p^up^{v_i}\ge p^{u_i}p^{v_i}\gtrsim_\e p^{\bF(s_1,t_1;2,1)-\e}. \]
        Also, one can check
        \[ p^u=\sum_{i=1}^mp^{u_i}\gtrsim p^{s_1}. \]

    \end{proof}

    Return to $\wt E_W:=\bigcup_{V\in\cV_W} Y(\ell_V)$ (which lies in $\wt{\Fp^2}=\Fp\times \bzz^{k-1}\times \Fp$). Recall $\#\cV_W\gtrapprox p^{t_2}$ and $\#Y(\ell_V)\sim p^{s_1} $.
    We see that $\wt E_W$ is a $(s_1,(t_2-\e)_+;2,1)$-set, for any $\e>0$.

    By Lemma \ref{lemuv}, there exist dyadic numbers $p^{u(W)}, p^{v(W)}$, so that $\wt E_W$ contains a subset of form
    \[ \bigsqcup_{\xi\in \Xi_W} Y_W(\rho_\xi). \]
Here, $\Xi_W\subset \bzz^k\times \Fp$ with $\#\Xi_W\sim p^{u(W)}$; $\rho_\xi:=\xi+(\Fp\times \bzz^k)$ and $Y_W(\rho_\xi)\subset \rho_\xi$ with $\#Y_W(\rho_\xi)\sim p^{v(W)}$. We also have
\[ p^{u(W)}\gtrsim p^{s_1}. \]
\[ p^{u(W)}p^{v(W)}\gtrsim_\e p^{\bF(s_1,(t_2-\e)_+;2,1)-\e}. \]
By pigeonholing on $(u(W),v(W))$ again, we can throw away some $W\in\cW$ so that the cardinality of $\cW$ is still $\gtrapprox p^{t_1}$, and assume that there exist $u,v$ such that
\[ (u(W),v(W))=(u,v),\ \textup{~for~all~}W\in\cW, \]
and
\[ p^u\gtrsim p^{s_1}. \]
\[ p^up^v\gtrsim_\e p^{\bF(s_1,(t_2-\e)_+;2,1)-\e}. \]
By Lemma \ref{lipint},
\[ p^up^v\gtrsim_\e p^{\bF(s_1,t_2;2,1)-\e}. \]

Finally, we are about to estimate the lower bound of the cardinality of $E=\bigcup_{W\in\cW}\bigcup_{V\in\cV_W}Y(V).$ This $E$ is not the original one, but the one after pigeonholing. Recall \eqref{YV}, we have
\begin{equation}
    \begin{split}
        \bigcup_{V\in\cV_W} Y(V)&=\bigcup_{V\in\cV_W} \bigsqcup_{x\in Y(\ell_V)}Y(V,x)\\
        &=\bigcup_{x\in \wt E_W}\bigg(\bigcup_{V: V\in\cV_W, x\in Y(\ell_V)}Y(V,x)\bigg).
    \end{split}
\end{equation} 
We define
\[ Y(W,x):=\bigcup_{V: V\in\cV_W, x\in Y(\ell_V)}Y(V,x).\]
Then $Y(W,x)$ is contained in the $(k-1)$-plane $x+W$. 

If $x\in \wt E_W$, then
\[ \# Y(W,x)\gtrsim p^{s_2}. \]
Recall that 
\[ \wt E_W \supset \bigsqcup_{\xi\in \Xi_W} Y_W(\rho_\xi). \]
We have
\[ \bigcup_{V\in\cV_W} Y(V)\supset \bigcup_{\xi\in\Xi_W}\bigcup_{x\in Y_W(\rho_\xi)}Y(W,x). \]
The reader can check that, for fixed $\xi\in\Xi_W$, $\bigcup_{x\in Y_W(\rho_\xi)}Y(W,x)$ lies in $\Fp^k\times \{\xi\}$.

Now we see that
\[ E\supset \bigcup_{W\in\cW}\bigcup_{\xi\in\Xi_W}\bigcup_{x\in Y_W(\rho_\xi)}Y(W,x). \]
We want to estimate the lower bound of the right-hand side. Intuitively, one can imagine that the worst case is when $\Xi_W$ are morally the same for all $W\in\cW$; otherwise we may translate them to the same $(k+1)$-th coordinate so that their union becomes smaller. For example, if we have sets $A_i\times \{\xi_i\}\subset \Fp^k\times \Fp$ ($i\in I$). Then $\bigcup_{i\in I}A_i\times \{\xi_i\}$ is the smallest when all the $\xi_i$ are the same.

Let us return to the estimate of 
\[E':=\bigcup_{W\in\cW}\bigcup_{\xi\in\Xi_W}\bigcup_{x\in Y_W(\rho_\xi)}Y(W,x).\]
There are two lower bounds. The first lower bound is obtained from a fixed $W$. We have
\[ \#E'\ge \bigcup_{\xi\in\Xi_W}\bigcup_{x\in Y_W(\rho_\xi)}Y(W,x)\gtrsim p^up^vp^{s_2}. \]

For the second lower bound, we will estimate $E'$ on each $\xi$-slice.
We define the $\xi$-slice of $E'$ by
\[ E'_\xi:=E'\cap (\Fp^k\times \{\xi\})=\bigcup_{W: W\in\cW,\xi\in\Xi_W}\bigcup_{x\in Y_W(\rho_\xi)}Y(W,x). \]
We will see that $E'_\xi$ is some sort of $(s_2,t_\xi;k,k-1)$-set. Define 
\[ p^{t_\xi}:=\#\{ (W,x): W\in\cW, \xi\in\Xi_W, x\in Y_W(\rho_\xi) \}. \]

We note two bounds for $t_\xi$:
\begin{equation}
    p^{t_\xi}\le \#\cW\cdot \#Y_W(\rho_\xi)\lesssim p^{t_1}p^v;
\end{equation}
\begin{equation}
    \sum_{\xi\in\bzz^k\times \Fp} p^{t_\xi}=\sum_{W\in\cW}\#\Xi_W\cdot\#Y_W(\rho_\xi)\gtrapprox p^{t_1}p^u p^v.
\end{equation}
We used that $p^{t_1}\gtrsim\#\cW\gtrapprox p^{t_1}$, $\#\Xi_W\sim p^u$, $\#Y_W(\rho_\xi)\sim p^v$.

And note that $Y(W,x)$ is a subset of the line $x+W$ such that $\#Y(W,x)\gtrsim p^{s_2}$. Therefore, $E_\xi'$ is a $(s_2,t_\xi;k,k-1)$-set. By \eqref{conditionprop}, we have
\[ \#E'_\xi\gtrsim_\e p^{\bF(s_2,t_\xi;k,k-1)-\e}. \]
Therefore,
\[ \#E\gtrsim_\e \sum_{\xi\in \bzz^k\times \Fp}p^{\bF(s_2,t_\xi;k,k-1)-\e}. \]

We claim that the right-hand side is
\[ \gtrapprox p^up^{\bF(s_2,t_1+v;k,k-1)-\e}. \]
By pigeonholing, we can find a dyadic number $p^{t_\circ}\lesssim p^{t_1}p^v$ and $\Xi_\circ\subset \bzz^k\times \Fp$, so that $2 p^{t_\circ}\ge p^{t_\xi}\ge p^{t_\circ}$ for $\xi\in\Xi_\circ$ and
\[ \#\Xi_\circ\cdot p^{t_\circ}\gtrapprox p^{t_1}p^up^v. \]
Hence,
\[ \#\Xi_\circ\gtrapprox p^u p^{t_1+v-t_\circ}. \]
We can estimate
\[ \#E\gtrsim_\e \sum_{\xi\in\Xi_\circ}p^{\bF(s_2,t_\xi;k,k-1)-\e}\ge\#\Xi_\circ\cdot p^{\bF(s_2,t_\circ;k,k-1)-\e}\gtrapprox p^u p^{\bF(s_2,t_\circ;k,k-1)+t_1+v-t_\circ-\e}. \]
By Lemma \ref{lipint}, we have
\[ \#E\gtrapprox_\e p^{u+\bF(s_2,t_1+v;k,k-1)-\e}. \]

Note that the implicit constants in the above arguments may depend on various parameters $s_1,s_2,t_1,t_2$ and so on. To get rid of this dependence, we use the trick discussed before this lemma. Hence, we update the estimate to
\[ p^{u+v}\gtrsim_\e p^{\bF((s_1-\e^2)_+,(t_2-\e^2)_+;2,1)-\e}, \]
\[ \#E \gtrsim_\e p^{u+\max\{ \bF((s_2-\e^2)_+,(t_1+v-\e^2)_+;k,k-1),s_2+v \}-\e}. \]
Now, the implicit constants only depend on $s,t,k,\e$.
\end{proof}

The next lemma is a recursive inequality of $\bF(s,t;k+1,k)$, which can be used to bound the right-hand side of \eqref{lempig1ineq}.
\begin{lemma}\label{recursionF}
    Suppose $(s,t;k+1,k)$ is admissible.
    Suppose there exist numbers $t_1,t_2,s_1,s_2,u,v$, such that
    \begin{equation}
        \begin{cases}
            t_1\in [0,k-1], t_2\in [0,2];\\
            s_1\in [0,1], s_2\in [0,s];\\
            u\in [s_1,1], v\in [0,1];
        \end{cases}
    \end{equation}
    and
    \begin{equation}\label{bigger}
        \begin{cases}
            t_1+t_2\ge t;\\
            s_1+s_2\ge s;\\
            u+v\ge\bF(s_1,t_2;2,1);
        \end{cases}
    \end{equation}

    Then, 
    \begin{equation}
        u+\max\{\bF(s_2,t_1+v;k,k-1),s_2+v\}\ge \bF(s,t;k+1,k).
    \end{equation}

\end{lemma}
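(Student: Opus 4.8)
The plan is to reduce the inequality to the two-dimensional Furstenberg bound \eqref{f21} by unwinding the definition of $\bF(s,t;k+1,k)$ and $\bF(s_2,t_1+v;k,k-1)$ through Definition \ref{deffurindex}, and then to verify the resulting finite system of inequalities by case analysis. First I would fix canonical expressions: write $s=d+\si$ with $d\in\N$, $\si\in(0,1]$. Since $s_2\le s$ and $s_1\le 1$, the pair $(s_1,s_2)$ splits the ``integer budget'' $d$ and the ``fractional budget'' $\si$; more precisely the constraint $s_1+s_2\ge s$ forces $s_1$ close to $1$ unless $d=0$, and one should first dispose of the degenerate subcases $s=0$ and $t\le (k-d-1)(n-k)$ where $\bF$ takes the trivial value (Cases (a),(b) of Definition \ref{deffurindex}); there the inequality follows immediately from $u\ge s_1$, $s_2+v\ge 0$ and Lemma \ref{easybound}. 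The substantive case is Case (c)/(d), where $t=(k-d-2)(n-k)+(d+2)m+\tau$ for the $(k+1,k)$-problem (note $n-k=1$ here, so $t=(d+2)m+\tau$ with $m\in\{0\}$ forced? — actually $n-k=1$ so $m=0$, and $t=(k-d-1)\cdot 1+\tau=k-d-1+\tau$), so $\bF(s,t;k+1,k)=s+\min\{\tau,\tfrac12(\si+\tau),1\}$ when $\tau\in(0,2]$ and $s+1$ when $\tau\in(2,d+2]$.

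Next I would analyze the right-hand side $u+\max\{\bF(s_2,t_1+v;k,k-1),s_2+v\}$. Writing $s_2=d_2+\si_2$ in canonical form, and using that for the $(k,k-1)$-problem we again have $n-k=1$, the quantity $\bF(s_2,t_1+v;k,k-1)$ is governed by where $t_1+v$ sits relative to $(k-1-d_2-1)\cdot 1=k-d_2-2$; concretely $\bF(s_2,t_1+v;k,k-1)=s_2+\min\{\tau',\tfrac12(\si_2+\tau'),1\}$ with $\tau'=(t_1+v)-(k-d_2-2)$ when this is in $(0,2]$. The key algebraic identity to exploit is that the ``loss'' parameters match up: one should show $\tau'\ge \tau - (\text{something controlled by }u+v-\bF(s_1,t_2;2,1)\ge 0)$, so that the $\min$-terms propagate correctly. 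I would split into the three regimes for the $2$-dimensional minimum in $\bF(s_1,t_2;2,1)=\min\{s_1+t_2,\tfrac32 s_1+\tfrac12 t_2,s_1+1\}$: (i) $t_2\le s_1$, giving $u+v\ge s_1+t_2$; (ii) $s_1\le t_2\le 2-s_1$, giving $u+v\ge \tfrac32 s_1+\tfrac12 t_2$; (iii) $t_2\ge 2-s_1$, giving $u+v\ge s_1+1$. In regime (iii), $v\ge 1+s_1-u\ge s_1$ (since $u\le 1$), so $s_2+v\ge s_2+s_1\ge s$, and combined with $u\ge s_1$ one gets $u+(s_2+v)\ge s_1+s\ge$ ... one then checks this beats $\bF(s,t;k+1,k)$ using $t_1+t_2\ge t$ and $t_2\le 2$, i.e. $t_1\ge t-2$, which controls $\tau\le 2+(k-1)-(k-d-1)=d+2$ as needed. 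Regimes (i) and (ii) require carrying the half-loss $\tfrac12(\si+\tau)$ through, using that $s_1\ge 1-(s-s_2)=1-\si+\si_2-?$ — here the bookkeeping on integer parts is delicate.

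The main obstacle I expect is precisely this integer-part bookkeeping: the function $\bF$ is only left-Lipschitz in the first slot (Lemma \ref{lipins}), not continuous, so I cannot freely perturb $s_1,s_2$; I must track exactly how $d=d_1+d_2$ or $d=d_1+d_2+1$ splits (depending on whether $\si_1+\si_2\le 1$ or $>1$, writing $s_1=d_1+\si_1$), and correspondingly how the threshold $(k-d-1)(n-k)$ for $t$ relates to the threshold $(k-1-d_2-1)(n-k)$ for $t_1+v$. The cleanest route is probably to treat $d_2$ (the integer part of $s_2$) as the primary case split, observe $d_1:=d-d_2$ or $d-d_2-1$, use $u\ge s_1\ge d_1$ (since $u\ge s_1$ and $s_1\ge d_1$), so that $u$ alone already supplies $d_1$ units, leaving $\si_1+\text{(extra from }u)+v$ to be compared against the $2$-dimensional bound $\bF(\si_1,\,t_2\text{ or }t_2+\text{shift};2,1)$ plus whatever $\bF(s_2,t_1+v;k,k-1)-s_2$ contributes. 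Then \eqref{bigger} (third line) directly gives what is needed in the fractional regime. I would organize the write-up as: (1) reduce to Case (c)/(d); (2) fix $n-k=1$ and simplify all three $\bF$-values to the one-parameter $\min$ form; (3) split on the $2$-d minimum regime; (4) within each, split on $\tau\le 2$ vs $\tau>2$ and on the integer carry; (5) verify each resulting linear inequality by ``brutal force'' as the paper promises. No single step is deep, but step (4)–(5) is where errors hide, so I would tabulate the cases explicitly rather than argue in prose.
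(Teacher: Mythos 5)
Your high-level plan matches the paper's: reduce to the non-degenerate regime of Definition \ref{deffurindex}, use $n-k=1$ to simplify all three $\bF$-values to the $\min$-form, and then verify a finite family of linear inequalities. You correctly dispose of the $s=0$ and $t\le k-d-1$ cases, correctly compute the canonical form $t=k-d-1+\tau$, and correctly identify that the delicate part is the $\tfrac32$-term together with integer-carry bookkeeping. The paper additionally first replaces the inequalities in \eqref{bigger} by equalities (Lemma \ref{recursionF1}) via a small monotonicity argument, which you omit; this is a convenience, not essential.

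However, there is a genuine gap in the proposal, and it is exactly in the spot you flag as "where errors hide." You organize the analysis by which term of $\bF(s_1,t_2;2,1)$ the bound $u+v$ saturates, and then intend to push each resulting linear inequality through. But within the intermediate regime ($u+v=\tfrac32 s_1+\tfrac12 t_2$, and $\bF(s_2,t_1+v;k,k-1)$ in its $s_2+\tau'$ form), \emph{neither} of the two available lower bounds on the left-hand side --- namely $u+\bF(s_2,t_1+v;k,k-1)$ alone, nor $u+s_2+v$ alone --- beats $d+\tfrac32\si+\tfrac12\tau$ in general. The paper's Subsubsubcase (c.I.1.2) resolves this by taking the \emph{average} of the two bounds:
\begin{equation*}
\bF \ge s+1+\tau-\tfrac12 t_2 \quad\text{and}\quad \bF\ge s+\tfrac12 t_2,
\end{equation*}
whose mean gives $\bF\ge s+\tfrac12(1+\tau)\ge d+\tfrac32\si+\tfrac12\tau$. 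A similar combination is used in (c.I.2.2) via the constraint $u+1\ge u+v$. Your plan, as written, treats each regime as a single linear inequality to check and does not anticipate needing convex combinations of two incomparable lower bounds; a straight "split and check one bound per case" execution would stall here. Aside from this, your case organization (leading with the $2$-d regime rather than, as the paper does, with the value of $(s_2,t_1+v)$) is a legitimate reordering of the same finite case tree, and the remaining steps are indeed mechanical as you expect.
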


Let us quickly see how Lemma \ref{lempig1} and Lemma \ref{recursionF} combined to prove Proposition \ref{proplower1}.
\begin{proof}[Proof of Proposition \ref{proplower1} assuming Lemma \ref{lempig1} and Lemma \ref{recursionF}]\hfill

If $s=0$ or $t=0$, by Proposition \ref{s0} or \ref{t0}, we are done. So, we suppose $s,t>0$.

Fix $0<\e<\min\{\si/2,t/2, 1/2\}$, where $s=d+\si$ with $d\in\N, \si\in(0,1]$. We just need to prove
\begin{equation}
        \#E\gtrsim_\e p^{\bF(s,t;k+1,k)-C\e}
    \end{equation} 
for any $(s,t;k+1,k)$-set $E$. Here, $C$ is a large number.    

We only need to prove for large enough $p$, since for small $p$ we can choose the constant to be large. 

Suppose \eqref{conditionprop} holds. Then in particular, Conjecture \ref{conj2d} also holds. If $E$ is a $(s,t;k+1,k)$-set, by Lemma \ref{lempig1}, there exist numbers $t_1,t_2,s_1,s_2,u,v$, such that
    \begin{equation}
        \begin{cases}
            t_1\in [0,k-1], t_2\in [0,2];\\
            s_1\in [0,1], s_2\in [0,s];\\
            u\in [s_1-\e,1], v\in [0,1];
        \end{cases}
    \end{equation}
    and
    \begin{equation}
        \begin{cases}
            t_1+t_2\ge t-\e;\\
            s_1+s_2\ge s-\e;\\
            u+v\ge \bF(s_1,t_2;2,1)-\e;
        \end{cases}
    \end{equation}
    and
    \begin{equation}
        \#E \gtrsim_\e p^{u+\max\{ \bF(s_2,t_1+v;k,k-1),s_2+v \}-\e}.
    \end{equation}
Here, we renamed $t_1$ to be $(t_1-\e^2)_+$ in Lemma \ref{lempig1} (and similar for $t_2,s_1,s_2,u,v$). Also, we used the fact that when $p$ is large enough (depending on $\e$), then $p^{t_1+t_2}\gtrapprox p^t$ implies $t_1+t_2\ge t-\e$. Also, by our assumption. $t-\e,s-\e>0$. Let $t'=t-\e$. $s'=s-\e$.

Since $\bF(s_1,t_2;2,1)\le 2$, we can find $u\le u'\le \min\{u+\e, 1\}$ and $v\le v'\le \min\{v+\e,1\}$, so that $u'\ge s_1$,
$u'+v'\ge \bF(s_1,t_2;2,1)$. Now we have
\begin{equation}
        \begin{cases}
            t_1\in [0,k-1], t_2\in [0,2];\\
            s_1\in [0,1], s_2\in [0,s];\\
            u'\in [s_1,1], v'\in [0,1];
        \end{cases}
    \end{equation}
    and
    \begin{equation}
        \begin{cases}
            t_1+t_2\ge t';\\
            s_1+s_2\ge s';\\
            u'+v'\ge \bF(s_1,t_2;2,1);
        \end{cases}
    \end{equation}
    and
    \begin{equation}
        \#E \gtrsim p^{u+\max\{ \bF(s_2,t_1+v;k,k-1),s_2+v \}-\e}\ge p^{u'+\max\{ \bF(s_2,t_1+v';k,k-1),s_2+v' \}-O(\e)}.
    \end{equation}
In the last inequality, we used Lemma \ref{lipint}.

Finally, by Lemma \ref{recursionF}, we have
\[ u'+\max\{ \bF(s_2,t_1+v';k,k-1),s_2+v'\}\ge \bF(s',t';k+1,k)\ge \bF(s,t;n,k)-O(\e). \]
The last inequality is by Proposition \ref{lipins} and \ref{lipint}.

In summary, we finish the proof of
\[ \#E\gtrsim p^{\bF(s,t;k+1,k)-C\e}. \]
    
\end{proof}

It remains to prove Lemma \ref{recursionF}.
It can be reduced to the following lemma, with ``$\ge$" in \eqref{bigger} all replaced by ``$=$".

\begin{lemma}\label{recursionF1}
    Suppose $(s,t;k+1,k)$ is admissible.
    Suppose there exist numbers $t_1,t_2,s_1,s_2,u,v$, such that
    \begin{equation}
        \begin{cases}
            t_1\in [0,k-1], t_2\in [0,2];\\
            s_1\in [0,1], s_2\in [0,s];\\
            u\in [s_1,1], v\in [0,1];
        \end{cases}
    \end{equation}
    and
    \begin{equation}
        \begin{cases}
            t_1+t_2= t;\\
            s_1+s_2= s;\\
            u+v=\bF(s_1,t_2;2,1);
        \end{cases}
    \end{equation}

    Then, 
    \begin{equation}
        u+\max\{\bF(s_2,t_1+v;k,k-1),s_2+v\}\ge \bF(s,t;k+1,k).
    \end{equation}
\end{lemma}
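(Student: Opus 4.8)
The plan is to treat this as a finite, explicit optimization problem, using the two-dimensional formula $\bF(s_1,t_2;2,1)=\min\{s_1+t_2,\tfrac32 s_1+\tfrac12 t_2,s_1+1\}$ together with the piecewise definition of $\bF(\cdot,\cdot;k+1,k)$ and $\bF(\cdot,\cdot;k,k-1)$ from Definition~\ref{deffurindex}. The first step is to reduce the number of free parameters: since $u+v=\bF(s_1,t_2;2,1)$ is fixed once $(s_1,t_2)$ is fixed, and since we want to \emph{minimize} the left-hand side $u+\max\{\bF(s_2,t_1+v;k,k-1),\,s_2+v\}$ over admissible $(u,v)$ subject to $u\in[s_1,1]$, $u+v=\bF(s_1,t_2;2,1)$, I would first observe that for fixed $(s_1,s_2,t_1,t_2)$ the map $v\mapsto u+\max\{\bF(s_2,t_1+v;k,k-1),s_2+v\}$ has a definite monotonicity: increasing $v$ by $\theta$ decreases $u$ by $\theta$ but, by Lemma~\ref{lipint} (Lipschitz-in-$t$ with constant $\le 1$) and the trivial bound, increases the max by at most $\theta$, so the left side is nonincreasing in $v$; hence the worst case is $v$ as large as possible, i.e. $v=\min\{\bF(s_1,t_2;2,1)-s_1,\,1\}$ and correspondingly $u=\max\{s_1,\bF(s_1,t_2;2,1)-1\}$. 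This pins down $(u,v)$ and leaves only $(s_1,s_2,t_1,t_2)$ with $s_1+s_2=s$, $t_1+t_2=t$.

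Next I would substitute the explicit expression \eqref{expression} for $(s,t)$ relative to $(n,k)=(k+1,k)$: writing $s=d+\si$ and $t=(k-d-1)+2m+\tau$ (here $n-k=1$, so $t=(k-d-1)+2m+\tau$ forces $m\in\{0\}$ or the Case (b) regime, which simplifies things considerably — in fact for $n=k+1$ the parameter $m$ is $0$ and $\tau\in(0,d+2]$). This is the key simplification: because $n-k=1$, the target $\bF(s,t;k+1,k)$ is just $s+\min\{\tau,\tfrac12(\si+\tau),1\}$ when $\tau\le 2$ (Case (c)) or $s+1$ when $\tau>2$ (Case (d)), with $t=(k-d-1)+\tau$. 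I would then split into the cases $d=0$ versus $d\ge 1$, and within each, according to which of the three terms in the 2D ``min'' is active for $\bF(s_1,t_2;2,1)$ and whether $t_1+v$ lands in the Case (a), (b), (c), or (d) regime for $\bF(s_2,t_1+v;k,k-1)$. In each of the resulting finitely many branches the inequality becomes a comparison of explicit piecewise-linear functions, which I would verify by choosing the split point: a natural ansatz is $s_1=\si$ (so $s_2=d$, integer), which makes $\bF(s_2,t_1+v;k,k-1)=d+\max\{0,t_1+v-(k-1)\}$ by Case~(a)/(b) — or better, $s_1=\min\{\si,1\}$ with the remaining budget put on $t_1$ — and then check directly that the chosen $(s_1,s_2,t_1,t_2)$ achieves equality in the target.

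The main obstacle I anticipate is the case analysis bookkeeping: the right-hand side $\bF(s,t;k+1,k)$ and the inner $\bF(s_2,\cdot;k,k-1)$ are each defined by four cases, and $\bF(s_1,t_2;2,1)$ by a three-term minimum, so a naive split produces many subcases, several of which are genuinely different (in particular the ``$\tau>2$'' Case (d) for the target, and the boundary $\si=1$ where the left-Lipschitz-but-not-continuous behavior in $s$ matters). The cleanest route is probably to prove it by induction on $k$ exploiting the self-similar structure — $\bF(s,t;k+1,k)$ relates to $\bF(s,t';k,k-1)$ by peeling off one ``layer'' — but I suspect the author instead just does the optimization ``by brutal force'' as promised in the strategy section, picking the optimal $(s_1,s_2,t_1,t_2,u,v)$ explicitly and verifying the (finitely many) resulting scalar inequalities; that is the approach I would follow, organizing the verification around the value of $d$ and the sign of $\tau-2$.
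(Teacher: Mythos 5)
There is a genuine gap, and it is one of logic rather than bookkeeping. The lemma is universally quantified over the decomposition: \emph{every} tuple $(t_1,t_2,s_1,s_2,u,v)$ satisfying the constraints must satisfy the inequality, because in the application (Lemma \ref{lempig1}) these parameters are produced by pigeonholing and are not at your disposal. Your plan, after the (correct) monotonicity reduction in $v$, is to ``verify by choosing the split point,'' e.g.\ the ansatz $s_1=\si$, $s_2=d$, ``and then check directly that the chosen $(s_1,s_2,t_1,t_2)$ achieves equality in the target.'' Exhibiting one decomposition where the left-hand side equals $\bF(s,t;k+1,k)$ proves sharpness (that is the content of the upper-bound construction in Section \ref{sec3}), not the lower bound; it says nothing about an adversarial choice such as $s_1=0$, or $t_2$ much larger or smaller than $\tau$, which are exactly the configurations the paper must and does handle. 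The paper's proof is an exhaustive case analysis over the position of $s_2$ (zero or not), the regime of $t_1+v$ relative to $k-d-2$ and $k-d$, which of the three branches of the two-dimensional minimum is active for $\bF(s_1,t_2;2,1)$, and which branch is active for $\bF(s_2,t_1+v;k,k-1)$ --- roughly a dozen subcases (e.g.\ (c.I.1.2) needs the two bounds $\bF\ge s+1+\tau-\tfrac12 t_2$ and $\bF\ge s+\tfrac12 t_2$ averaged, (c.I.2.2) needs $u+1\ge \tfrac32 s_1+\tfrac12 t_2$), and none of these verifications, which are the entire substance of the lemma, appear in your proposal. Your monotonicity observation (LHS nonincreasing in $v$ along $u+v=\mathrm{const}$, by Lemma \ref{lipint}) is true and could mildly shorten such an analysis, but it does not eliminate the need to quantify over all $(s_1,s_2,t_1,t_2)$.

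Two smaller inaccuracies in the sketch: the formula you assert under the ansatz, $\bF(s_2,t_1+v;k,k-1)=d+\max\{0,t_1+v-(k-1)\}$ with $s_2=d$, is valid only for $d=0$; Case (a) of Definition \ref{deffurindex} requires $s_2=0$, and for an integer $s_2=d\ge 1$ one must write $s_2=(d-1)+1$ and use Cases (b)--(d), e.g.\ $\bF(1,1.5;2,1)=2\ne 1.5$. Also $t=(k-d-1)+(d+2)m+\tau$ (coefficient $d+2$, not $2$), though since $n-k=1$ forces $m=0$, as you note, this is harmless here.
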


We show that Lemma \ref{recursionF1} implies Lemma \ref{recursionF}.
\begin{proof}[Proof of Lemma \ref{recursionF} assuming Lemma \ref{recursionF1}]
    The idea is to slightly modify $s_1,s_2,t_1,t_2,u,v$ so that the ``$\ge$" in \eqref{bigger} becomes ``$=$", and then we can apply Lemma \ref{recursionF1}.

    For $i=1,2$, we can find $ t_i'\in [0, t_i], s'_i\in [0,s_i]$, so that $t_1'+t_2'=t, s_1'+s_2'=s$. 
    It is slightly trickier to modify $u,v$. Noting that $u+v\ge \bF(s_1,t_2;2,1)\ge \bF(s_1',t_2';2,1)\ge s_1'$, we can find $u'\in [s_1',u], v'\in[0,v]$ so that $u'+v'=\bF(s_1',t_2';2,1)$. Now we can apply Lemma \ref{recursionF1} to the numbers $t_1',t_2',s_1',s_2',u',v'$. We obtain
    \[ u'+\max\{\bF(s'_2,t'_1+v';k,k-1),s'_2+v'\}\ge \bF(s,t;k+1,k). \]
    Since the new numbers are smaller than the old numbers and by using the monotonicity of $\bF(\cdot,\cdot;n,k)$, we have
    \[u+\max\{\bF(s_2,t_1+v;k,k-1),s_2+v\}\ge \bF(s,t;k+1,k).\]
\end{proof}

It remains to prove Lemma \ref{recursionF1}.

\begin{proof}[Proof of Lemma \ref{recursionF1}]
For convenience, we recall the definition of Furstenberg index in the special case $(n,k)=(k+1,k)$.

(a) If $s=0$, 
    \begin{equation*}
        \bF(0,t;k+1,k)= \max\{0,t-k\}.
    \end{equation*}

    (b) If $s>0$, write $s=d+\si$. If also $t\in [0,k-d-1]$,
    \begin{equation*}
        \bF(s,t;k+1,k)=s.
    \end{equation*}

    If we are not in Case (a) and (b), then $s\in (0,k]$ and $t\in (k-d-1,k+1]$,
    we write 
\begin{equation*}
    \begin{cases}
        s=d+\si\\
        t=k-d-1+\tau,
    \end{cases}
\end{equation*} 
    where $d\in\{0,\dots,k-1\}$, $\si\in (0,1]$, $\tau\in (0,d+2]$. 

(c) If $\tau\in(0,2]$, 
\begin{equation*}
    \bF(s,t;k+1,k)= d+\min\{\si+\tau,\frac32\si+\frac12\tau,\si+1\}
\end{equation*}

(d) If $\tau\in (2,d+2]$, 
\begin{equation*}
    \bF(s,t;k+1,k)= s+1.
\end{equation*}

\bigskip

Let us denote 
\[ \bF:=u+\max\{\bF(s_2,t_1+v;k,k-1),s_2+v\}. \]
Our goal is to prove
\[ \bF\ge \bF(s,t;k+1,k). \]

    When $s=0$, then $s_1=s_2=0$. We want to prove $\bF\ge \max\{0,t-k\}$. Trivially, $\bF\ge 0$. Also,
    \[ \bF\ge u+\bF(0,t_1+v;k,k-1)\ge u+t_1+v-(k-1). \]
    Using $u+v= \bF(s_1,t_2;2,1)\ge t_2-1$ and $t_1+t_2=t$, we obtain
    \[ \bF\ge t-k. \]
    
    When $t\in[0, k-d-1]$, we have
    \[ \bF\ge u+s_2+v= s_2+\bF(s_1,t_2;2,1)\ge s_2+s_1= s=\bF(s,t;k+1,k). \]

    So we consider $s>0$ and $t>k-d-1$. Write
    \begin{equation*}
    \begin{cases}
        s=d+\si\\
        t=k-d-1+\tau,
    \end{cases}
\end{equation*} 
    where $d\in\{0,\dots,k-1\}$, $\si\in (0,1]$, $\tau\in (0,d+2]$. In this case,
    \[  \bF(s,t;k+1,k)=d+\min\{\si+\tau,\frac32\si+\frac12\tau,\si+1\}. \]
    We just need to show either $\bF\ge d+\si+\tau$, or $\bF\ge d+
 \frac32\si+\frac12\tau$, or $\bF\ge d+\si+1$.
    
\medskip

We first analyze $\bF(s_2,t_1+v;k,k-1)$. This is done by discussing the values of $(s_2,t_1+v)$.

\medskip

    \fbox{Case (a): $s_2=0$} By definition, we have $\bF(s_2,t_1+v;k,k-1)= \max\{0,t_1+v-(k-1)\}$. Since $s_1+s_2= s=d+\si$ and $s_1\le 1$, we have $d=0,s_1=\si$. We have
    \[ \bF \ge u+\max\{t_1+v-(k-1),v\}. \]

Recall $u+v\ge \bF(\si,t_2;2,1), t_1+t_2= t=k-1+\tau$. We have
\[ \bF\ge \bF(\si,t_2;2,1)+ 
\max\{\tau-t_2,0\}.\]
If $t_2\ge \tau$, then $\bF(\si,t_2;2,1)\ge \bF(\si,\tau;2,1)$. If $t_2\le \tau$, then $\bF(\si,t_2;2,1)+ 
\tau-t_2\ge \bF(\si,\tau;2,1)$, by Lemma \ref{lipint}.
Therefore, we have
\[  \bF\ge \bF(\si,\tau;2,1)=\min\{\si+\tau,\frac32\si+\frac12\tau,\si+1\}=\bF(s,t;k+1,k).\]
The last inequality is because $d=0$.

\medskip
    
    \fbox{ Case (b) $t_1+v\le k-d-2$ }
    We have
    \[ \bF\ge u+v+s_2\ge \bF(s_1,t_2;2,1)+s_2. \]

    \fbox{Subcase (b.0): $s_1=0$} In this case, $ \bF(s_1,t_2;2,1)=\max\{0,t_2-1\} $, $s_2=d+\si$. We have
    \[  \bF\ge \max\{0,t_2-1\}+d+\si\ge d+\si+\tau. \]
    The last inequality is because 
    \[ t_2= t-t_1\ge (k-d-1+\tau)-(k-d-2-v)\ge 1+\tau. \]

    \fbox{Subcase (b.1): $\bF(s_1,t_2;2,1)= s_1+t_2$} We have
    \[ \bF\ge s_1+t_2+s_2 = s+t_2\ge d+\si+\tau. \]
    since $t_2\ge 1+\tau$.
    
    \fbox{Subcase (b.2): $\bF(s_1,t_2;2,1)=\frac32s_1+\frac12t_2$} We have
    \[ \bF\ge \frac12(s_1+t_2)+d+\si \ge d+\frac32\si+\frac12\tau.\]
    Since $t_2\ge 1+\tau\ge \si+\tau.$

    \fbox{Subcase (b.3): $\bF(s_1,t_2;2,1)= s_1+1$} We have
    \[ \bF\ge s+1=d+\si+1. \]

\medskip

    \fbox{ Case (c): $s_2>0$, $k-d-2<t_1+v\le k-d$} We write $t_1+v=k-d-2+\tau'$ for $\tau'\in (0,2]$. Since $t_1+t_2= k-d-1+\tau$, we have $\tau'=1+\tau+v-t_2$.

    \fbox{Subcase (c.I): $s_1<\si$ } We have $s_2= s-s_1=d+(\si-s_1)$. We have
    \begin{equation}
        \begin{split}
            \bF(s_2,t_1+v;k,k-1)=\bF(d+(\si-s_1),t_1+v;k,k-1)\\
            = d+\min\{ \si-s_1+\tau',\frac32(\si-s_1)+\frac12\tau',(\si-s_1)+1 \}. 
        \end{split}
    \end{equation} 

    \fbox{Subsubcase (c.I.1): $\bF(s_2,t_1+v;k,k-1)=s_2+\tau'$} We have
    \begin{equation}
        \begin{split}
            \bF&\ge u+\max\{ s_2+\tau',s_2+v \}\\
            &=u+s_2+\max\{ \tau',v \}.
        \end{split}
    \end{equation} 

    \fbox{Subsubsubcase (c.I.1.0): $s_1=0$} Then $u+v= \bF(0,t_2;2,1)=\max\{0,t_2-1\}$. We have
    \[ \bF\ge u+s_2+\tau'=u+v+s_2+1+\tau-t_2\ge s_2+\tau=s+\tau. \]

    \fbox{Subsubsubcase (c.I.1.1): $u+v= \bF(s_1,t_2;2,1)=s_1+t_2$} We have
    \[ \bF\ge u+s_2+\tau'=u+s_2+1+\tau+v-t_2= s+\tau+1. \]

    \fbox{Subsubsubcase (c.I.1.2): $u+v=\bF(s_1,t_2;2,1)= \frac32s_1+\frac12t_2$} We have
    \[ \bF\ge \frac32s_1+\frac12t_2+s_2+1+\tau-t_2=s+\frac12(s_1+t_2)+1+\tau-t_2\ge s+1+\tau-\frac12t_2.  \]
    We also have
    \[ \bF\ge u+v+s_2\ge s+\frac12t_2. \]
    Combining together gives
    \[ \bF\ge s+\frac12(1+\tau)\ge d+\frac32\si+\frac12\tau. \]

    \fbox{Subsubsubcase (c.I.1.3): $u+v=\bF(s_1,t_2;2,1)= s_1+1$} We have
    \[ \bF\ge u+v+s_2\ge s+1. \]

\medskip

    \fbox{SubSubcase (c.I.2): $\bF(s_2,t_1+v;k,k-1)=d+\frac32(\si-s_1)+\frac12\tau'=s_2+\frac12(\si-s_1+\tau')$} We have
    \[ \bF\ge u+s_2+\max\{ \frac12(\si-s_1+\tau'),v \}=u+s_2+\max\{\frac12(\si+1+\tau+v-t_2-s_1),v\}. \]
    
\fbox{Subsubsubcase (c,I.2.0): $s_1=0$} Then $u+v=\max\{0,t_2-1\}$. We have
\[ \bF\ge \frac12u+s_2+\frac12(u+v+\si+1+\tau-t_2-s_1)\ge s_2+\frac12(\si+\tau-s_1)=d+\frac32\si+\frac12\tau.  \]

\fbox{Subsubsubcase (c.I.2.1): $u+v= s_1+t_2$} We have
\[ \bF\ge \frac12u+
 s_2+\frac12(\si+\tau+1)\ge d+\frac32\si+\frac12\tau. \]
Here, we used $u\ge s_1, 1\ge s_1$.

\fbox{Subsubsubcase (c.I.2.2): $u+v= \frac32s_1+\frac12t_2$} We have
\[ \bF\ge \frac12u+s_2+\frac12(\si+1+\tau)+\frac14(s_1-t_2) \]
To prove $\bF\ge d+\frac32\si+\frac12\tau$,
it suffices to prove 
\[ \frac12(u+1)+\frac14(s_1-t_2)\ge s_1. \]
This is true since $u+1\ge u+v= \frac32s_1+\frac12t_2$.

\fbox{Subsubsubcase (c.I.2.3): $u+v= s_1+1$ }
\[ \bF\ge u+v+s_2= s+1. \]

\fbox{Subsubcase (c.I.3): $\bF(s_2,t_1+v;k,k-1)=s_2+1$ } We have
\[ \bF\ge u+s_2+1\ge s_1+s_2+1=s+1. \]

\medskip

\fbox{Subcase (c.II): $s_1\ge \si$ } We write $s_2=(d-1)+(\si+1-s_1)$. We have
\begin{align*}
    \bF(s_2,t_1+v;k,k-1)&=d-1+\min\{ \si+1-s_1+\tau', \frac32(\si-s_1+1)+\frac12\tau', \si-s_1+2 \}\\
    &=d+\min\{ \si-s_1+\tau', \frac32(\si-s_1)+\frac12\tau'+\frac12, \si-s_1+1 \}\\
    &\ge d+\min\{ \si-s_1+\tau', \frac32(\si-s_1)+\frac12\tau', \si-s_1+1 \}
\end{align*} 
The discussion is the same as Subcase (c.I), since we did not use the condition $s_1<\si$.

\fbox{Subcase (d): $t_1+v>k-d$} We have
\[ \bF(s_2,t_1+v;k,k-1)=s_2+1. \]
Therefore,
\[ \bF\ge u+s_2+1\ge s_1+s_2+1=s+1. \]

\end{proof}

\

\section{Furstenberg set problem: Lower bound II}\label{sec5}

\begin{proposition}\label{proplower2}
    Let $n,k\in\N$ and $n\ge k+2$.
    Suppose for any $n'=k+1,\dots,n-1$ and admissible $(s,t;n',k)$, we know
    \begin{equation}\label{conditionprop2}
        \inf_{E:\ (s,t;n',k)\textup{-set}}\#E\gtrsim_\e p^{\bF(s,t;n',k)-\e},
    \end{equation} 
    for any $\e>0$.
    Then for admissible $(s,t;n,k)$, we know
    \begin{equation}
        \inf_{E:\ (s,t;n,k)\textup{-set}}\#E\gtrsim_\e p^{\bF(s,t;n,k)-\e},
    \end{equation} 
    for any $\e>0$.
    
\end{proposition}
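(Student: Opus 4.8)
The plan is to project a $(s,t;n,k)$-set in $\Fp^n$ along one coordinate direction and to extract from it two Furstenberg sets of $k$-planes in smaller ambient dimension---one living inside a fiber $\cong\Fp^{k+1}$ and one living in $\Fp^{n-1}$ after a further slicing---then to feed both into the hypothesis \eqref{conditionprop2} and to close the induction with a linear-programming inequality for $\bF$, proved by brute force in the spirit of Lemma \ref{recursionF1}.

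By Propositions \ref{s0} and \ref{t0} we may assume $s,t>0$. Write the given set as $E=\bigcup_{V\in\cV}Y(V)$ with $\#\cV\approx p^t$ and $\#Y(V)\approx p^s$ (pass to subsets to get $\approx$). After a change of coordinates we may assume that a $\gtrsim 1$ fraction of the $V\in\cV$ do not contain the direction $(0,\dots,0,1)$: for a fixed $V$ the bad directions form a $k$-dimensional subspace, a $\lesssim p^{-(n-1-k)}\le p^{-1}$ fraction of all directions, so such a coordinate exists by averaging; discarding the rest, every remaining $V\in\cV$ projects under $\pi\colon\Fp^n=\Fp^{n-1}\times\Fp\to\Fp^{n-1}$ to a $k$-plane. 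Partition $\cV=\bigsqcup_{W\in A(k,\Fp^{n-1})}\cV_W$ by the image and pigeonhole: there are dyadic $p^{t_1},p^{t_2}$ with $t_1\le(k+1)(n-1-k)$, $t_2\le k+1$, $t_1+t_2\gtrapprox t$, and a set $\cW$ with $\#\cW\approx p^{t_1}$ and $\#\cV_W\approx p^{t_2}$ for $W\in\cW$. For each $W\in\cW$ the fiber $\pi^{-1}(W)$ is a $(k+1)$-plane $\cong\Fp^{k+1}$, and $E_W:=\bigcup_{V\in\cV_W}Y(V)\subset\pi^{-1}(W)$ is a $(s,t_2;k+1,k)$-set, so by \eqref{conditionprop2} with $n'=k+1$ we get $\#E_W\gtrsim_\e p^{\bF(s,t_2;k+1,k)-\e}$.

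Next I would run the greedy slicing argument of Lemma \ref{lemuv}, with \eqref{conditionprop2} for $(k+1,k)$-sets playing the role of Conjecture \ref{conj2d}: inside each $\pi^{-1}(W)\cong W\times\Fp$ slice along the vertical fibers $\{w\}\times\Fp$; since each $V\in\cV_W$ is transverse to $(0,\dots,0,1)$, removing one vertical fiber deletes at most one point of each $Y(V)$, so $E_W$ remains a $(s,t_2;k+1,k)$-set while $\ll p^s$ fibers are removed, and the algorithm terminates. This yields $\Xi_W\subset W$ with $\#\Xi_W\approx p^{s_1}$ and $s_1\gtrapprox s$, and vertical slices $E_W^w\subset\Fp$ with $\#E_W^w\approx p^{s_2}$ for $w\in\Xi_W$, with $s_1+s_2\gtrapprox\bF(s,t_2;k+1,k)$, $s_1\le k$, $s_2\le 1$; a further pigeonhole makes $(s_1,s_2)$ common to all $W\in\cW$ while keeping $\#\cW\gtrapprox p^{t_1}$. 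Now $\Xi:=\bigcup_{W\in\cW}\Xi_W\subset\Fp^{n-1}$ is a $(s_1,t_1;n-1,k)$-set, so by \eqref{conditionprop2} with $n'=n-1$ (legitimate since $n\ge k+2$) we have $\#\Xi\gtrsim_\e p^{\bF(s_1,t_1;n-1,k)-\e}$. Since $Y:=\bigcup_{W\in\cW}\bigcup_{w\in\Xi_W}\{w\}\times E_W^w\subset E$ and every $w\in\Xi$ contributes a fiber $Y\cap(\{w\}\times\Fp)$ of cardinality $\ge p^{s_2}$,
\[ \#E\ \ge\ \#Y\ \ge\ \#\Xi\cdot p^{s_2}\ \gtrsim_\e\ p^{\,s_2+\bF(s_1,t_1;n-1,k)-\e}. \]

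It then remains to prove the linear-programming inequality: whenever $t_1\in[0,(k+1)(n-1-k)]$, $t_2\in[0,k+1]$, $t_1+t_2\ge t$, $s_1\in[s,k]$, $s_2\in[0,1]$ and $s_1+s_2\ge\bF(s,t_2;k+1,k)$, one has $s_2+\bF(s_1,t_1;n-1,k)\ge\bF(s,t;n,k)$. As in the passage from Lemma \ref{recursionF} to Lemma \ref{recursionF1}, one first uses the monotonicity of $\bF$ (Lemmas \ref{lipint} and \ref{lipins}) to shrink the parameters until all three inequalities become equalities, and then checks the resulting inequality by going through the combinations of defining branches of $\bF$ in Definition \ref{deffurindex}, tracking which term of each inner ``$\min$'' is active and using Lemmas \ref{easybound} and \ref{lipint} to control the quantities that arise; the boundary situations $s=0$, $t=0$, $t_1=0$, $t_2=0$ either fall under Propositions \ref{s0}--\ref{t0} or reduce directly to the $(k+1,k)$ result. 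Once the inequality is in hand, the $\gtrapprox$ and $\gtrsim_\e$ estimates convert into the clean bound $\#E\gtrsim_\e p^{\bF(s,t;n,k)-\e}$ exactly as in the proof of Proposition \ref{proplower1}, by absorbing the logarithmic and $\e$-losses (taking $p$ large in terms of $\e$) and invoking the local left-Lipschitz continuity of $\bF$ in $s$. I expect this brute-force linear-programming step to be the main obstacle: all three of $\bF(s,t;n,k)$, $\bF(s,t_2;k+1,k)$ and $\bF(s_1,t_1;n-1,k)$ split into four cases with inner minima to resolve, so there are many combinations to treat; the preliminary reduction to equalities and the subadditivity lemmas keep it manageable, but it is the bulk of the work.
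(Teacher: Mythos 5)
Your proposal follows essentially the same route as the paper's proof of Proposition~\ref{proplower2}: project along the last coordinate and partition $\cV$ by image in $A(k,\Fp^{n-1})$ (the first pigeonhole of Lemma~\ref{lempig2}, yielding $t_1,t_2$); view each fiber $E_W\subset W\times\Fp\cong\Fp^{k+1}$ as an $(s,t_2;k+1,k)$-set; run the greedy vertical slicing in each fiber to extract $\Xi_W$ and slice sizes with $s_1\gtrsim s$ (the paper's Lemma~\ref{lems1s2}, the analogue of Lemma~\ref{lemuv}); note that $\bigcup_W\Xi_W$ is an $(s_1,t_1;n-1,k)$-set; then apply the $(n-1,k)$ and $(k+1,k)$ hypotheses and a recursive inequality for $\bF$. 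Your inequality ``$s_2+\bF(s_1,t_1;n-1,k)\ge\bF(s,t;n,k)$ subject to $s_1+s_2\ge\bF(s,t_2;k+1,k)$, $s_2\ge 0$, $s_1\ge s$'' is logically equivalent to the paper's Lemma~\ref{recursionF2}, since the worst case is $s_2=\max\{\bF(s,t_2;k+1,k)-s_1,0\}$.

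The one substantive thing your proposal does not do is carry out that brute-force verification of the $\bF$-inequality. You correctly flag it as the main obstacle, and your plan for it (reduce to equalities via monotonicity and Lemmas~\ref{lipint}/\ref{lipins}, then split on the branches of Definition~\ref{deffurindex}, using Lemma~\ref{easybound}) matches the paper's. But in the paper this case analysis (Lemma~\ref{recursionF2}) is the bulk of Section~\ref{sec5}, splitting on whether $t_2\lessgtr k-d-1$, whether $\tau-\tau_2\gtrless 0$, whether $m_1\in\{m-1,m\}$ versus $m_1\ge m+1$, whether $\si_1\le 1$ or $\si_1>1$, and so on; until those branches are actually checked, the argument is an outline of the proof rather than a proof. (A very minor quibble: in your averaging argument for choosing a good last coordinate, the fraction of bad directions for a fixed $V$ is $\sim p^{-(n-k)}$, not $p^{-(n-1-k)}$, though the conclusion you need is unaffected.)
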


We prove the following lemma. Actually, we also need to deal with a technical thing as mentioned before Lemma \ref{lempig1}. Since we have done it once in the proof of Lemma \ref{lempig1}, we will not reproduce here.
\begin{lemma}\label{lempig2}
Assume \eqref{conditionprop2} holds.
    Suppose $(s,t;n,k)$ is admissible and $n\ge k+2$. Then for any $(s,t;n,k)$-set $E$, there exist numbers $t_1,t_2,s_1,s_2$, such that
    \begin{equation}
        \begin{cases}
            1\le p^{t_1}\le p^{(k+1)(n-k-1)}, 1\le p^{t_2}\le p^{k+1};\\
            p^{s}\lesssim p^{s_1}\le p^{k}, 1\le p^{s_2}\le p;
        \end{cases}
    \end{equation}
    and
    \begin{equation}
        \begin{cases}
            p^{t_1+t_2}\gtrapprox p^t;\\
            p^{s_1+s_2}\gtrsim_\e p^{\bF(s,t_2;n-1,k)-\e};
        \end{cases}
    \end{equation}
    and
    \begin{equation}\label{lempig2ineq}
        \#E \gtrsim_\e p^{\bF(s_1,t_1;n-1,k)+s_2-\e}.
    \end{equation}
    
\end{lemma}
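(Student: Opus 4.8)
The plan is to project one coordinate down to $\Fp^{n-1}$ and to locate two lower–dimensional Furstenberg sets in the resulting picture — a fibrewise family of $(s,t_2;k+1,k)$-sets and a single $(s_1,t_1;n-1,k)$-set in the base — and then feed both into the hypothesis \eqref{conditionprop2}. (If $s=0$ or $t=0$ the statement is immediate from Propositions \ref{s0}, \ref{t0}, so assume $s,t>0$.) Writing $E=\bigcup_{L\in\cL}Y(L)$ with $\#\cL\sim p^t$, $\#Y(L)\sim p^s$, I would first choose coordinates and discard a fraction $\lesssim p^{-(n-k)}\le p^{-1}$ of $\cL$ so that every $L\in\cL$ is transverse to the vertical direction $(0,\dots,0,1)$. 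Let $\pi:\Fp^n\to\Fp^{n-1}$ forget the last coordinate; for transverse $L$, $\pi|_L$ is a bijection onto a $k$-plane $\pi(L)$. Partitioning $\cL=\bigsqcup_W\cL_W$ with $\cL_W=\{L:\pi(L)=W\}$, and using that the $k$-planes over a fixed $W$ are exactly the graphs of affine maps $W\to\Fp$ (so $\#\cL_W\le p^{k+1}$), a dyadic pigeonhole on $\#\cL_W$ produces $\cW$ with $\#\cW\sim p^{t_1}\le p^{(k+1)(n-k-1)}$, $\#\cL_W\sim p^{t_2}\le p^{k+1}$ for $W\in\cW$, and $p^{t_1+t_2}\gtrapprox p^t$.

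Next, for each fixed $W\in\cW$, the set $E_W:=\bigcup_{L\in\cL_W}Y(L)$ lies in the $(k+1)$-plane $\pi^{-1}(W)$ and is a $(s,t_2;k+1,k)$-set; after an affine embedding $\pi^{-1}(W)\hookrightarrow\Fp^{n-1}$ (possible since $k+1\le n-1$) it is also a $(s,t_2;n-1,k)$-set, so \eqref{conditionprop2} gives $\#E_W\gtrsim_\e p^{\bF(s,t_2;n-1,k)-\e}$. I would then run the iterative fibre-extraction argument of Lemma \ref{lemuv}, applied to $\pi^{-1}(W)\cong\Fp^k\times\Fp$ sliced by the vertical lines $\pi^{-1}(w)$, $w\in W$: transversality guarantees each $Y(L)$ meets every such line in at most one point, so deleting fewer than $\sim p^s$ of them leaves a genuine $(s,t_2;k+1,k)$-set, and one can keep pulling out uniform batches of vertical fibres until their total count surpasses $p^s$. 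This produces $\Xi_W\subset W$ with $p^s\lesssim\#\Xi_W\le p^k$ and $\#(\pi^{-1}(w)\cap E_W)\sim p^{s_2(W)}$ for all $w\in\Xi_W$, where $\#\Xi_W\cdot p^{s_2(W)}\gtrsim_\e p^{\bF(s,t_2;n-1,k)-\e}$. A last pigeonhole over $W$ (keeping $\#\cW\gtrapprox p^{t_1}$) makes $\#\Xi_W\sim p^{s_1}$ and $s_2(W)=s_2$ uniform in $W$, giving $p^s\lesssim p^{s_1}\le p^k$, $1\le p^{s_2}\le p$, and $p^{s_1+s_2}\gtrsim_\e p^{\bF(s,t_2;n-1,k)-\e}$.

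To finish, set $B:=\bigcup_{W\in\cW}\Xi_W$. Up to polylogarithmic losses (absorbed by shrinking $t_1$ by $\e$ and using the Lipschitz bound Lemma \ref{lipint}) this is a $(s_1,t_1;n-1,k)$-set, so \eqref{conditionprop2} gives $\#B\gtrsim_\e p^{\bF(s_1,t_1;n-1,k)-\e}$. Since the lines $\pi^{-1}(w)$ are pairwise disjoint and, for each $w\in B$, some $W\in\cW$ has $w\in\Xi_W$ and hence $\#(\pi^{-1}(w)\cap E)\ge\#(\pi^{-1}(w)\cap E_W)\gtrsim p^{s_2}$, summing over $w\in B$ yields
\[ \#E\ \ge\ \sum_{w\in B}\#\bigl(\pi^{-1}(w)\cap E\bigr)\ \gtrsim\ \#B\cdot p^{s_2}\ \gtrsim_\e\ p^{\bF(s_1,t_1;n-1,k)+s_2-\e}, \]
which is \eqref{lempig2ineq}. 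This is exactly the bound coming from the ``worst case'' configuration $E_W=\Xi_W\times\{1,\dots,p^{s_2}\}$ with a common $\Xi_W$, and the argument shows it is valid in general.

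The step I expect to be the main obstacle is forcing $p^s\lesssim p^{s_1}$, i.e.\ the iterative fibre extraction modelled on Lemma \ref{lemuv}: without it the base set $B$ carries no usable $s$-parameter, and the companion recursive inequality for the $(n-1,k)\to(n,k)$ step — the analogue of Lemma \ref{recursionF} — would fail. Everything else is projection bookkeeping and standard dyadic pigeonholing, plus the routine verification that each intermediate set is still a Furstenberg set of the claimed type.
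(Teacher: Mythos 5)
Your proposal is correct and follows essentially the same route as the paper: project to $\Fp^{n-1}$, dyadically pigeonhole the fibres $\cL_W$ to get $(t_1,t_2)$, run the fibre-extraction/iteration argument (the paper isolates this as Lemma~\ref{lems1s2}, the $(k+1,k)$-analogue of Lemma~\ref{lemuv} you cite) inside each $\pi^{-1}(W)\cong\Fp^{k+1}$ to get $(s_1,s_2)$ with $s_1\gtrsim s$, pigeonhole over $W$ for uniformity, and then bound $\#E$ from below by viewing $\bigcup_W\Xi_W$ as a $(s_1,t_1;n-1,k)$-set lying on disjoint vertical fibres each carrying $\gtrsim p^{s_2}$ points. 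The only cosmetic difference is that the paper applies \eqref{conditionprop2} with $n'=k+1$ inside the fibre, giving the (stronger) bound $p^{s_1+s_2}\gtrsim_\e p^{\bF(s,t_2;k+1,k)-\e}$, whereas you embed $\pi^{-1}(W)$ into $\Fp^{n-1}$ and get $\bF(s,t_2;n-1,k)$; both imply the stated inequality.
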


\begin{proof}
    We will do several steps of pigeonholing, which lead to the parameters $t_1,t_2,s_1,s_2,u,v$.

    By the definition of $(s,t;n,k)$-set, we write
    \[ E=\bigcup_{V\in\cV}Y(V), \]
    where $\cV\subset A(k,\Fp^{n})$ with $\#\cV\sim p^t$, and $Y(V)\subset V$ with $\#Y(V)\sim p^s$.

    The strategy is to find a subset of $\cV$, and a subset of $Y(V)$ for each $V$, still denoted by $\cV$, $Y(V)$, so that the new $\cV, Y(V)$ satisfy certain uniformity. This is done by a sequence of pigeonhole arguments.

    By properly choosing the coordinate, we can assume that $\gtrsim 1$ fraction of the $k$-planes in $\cV$ are not parallel to the $n$-th axis $\bzz^{n-1}\times \Fp$. Throwing away some $V\in\cV$, we can just assume all the $k$-planes in $\cV$ are not parallel to $\bzz^{n-1}\times \Fp$ with $\cV$ still satisfying $\#\cV\gtrsim p^t$. Because of this, we see that for any $V\in\cV$, the projection of $V$ onto $\Fp^{n-1}\times \bzz$, denoted by $\pi_{\Fp^{n-1}}(V)$ is a $k$-plane in $\Fp^{n-1}\times \bzz$.

    Now, we partition $\cV$ according to $A(k,\Fp^{n-1}\times \bzz)$. For each $W\in A(k,\Fp^{n-1}\times \bzz)$, we define $\cV_W$ to be the set of $V\in\cV$ such that $\pi_{\Fp^{n-1}}(V)=W$. We obtain a partition
    \[ \cV=\bigsqcup_{W\in A(k,\Fp^{n-1}\times \bzz)}\cV_W. \]

    \textit{First pigeonholing.} We assume that there exists $\cW\subset A(k,\Fp^{n-1}\times \bzz)$ with $\#\cW\sim p^{t_1}$ such that for each $W\in\cW$, $\#\cV_W\sim p^{t_2}$. Here, $1\le p^{t_1}\le p^{(k+1)(n-k-1)},1\le p^{t_2}\le p^{k+1}$ are dyadic numbers, and 
    \begin{equation}
        p^{t_1}p^{t_2}\gtrapprox p^t.
    \end{equation}
     The upper bound for $t_1,t_2$ is because $\#A(k,\Fp^{n-1}\times \bzz)\lesssim p^{(k+1)(n-k-1)}$, and $\#\cV_W\le \#A(k,W\times \Fp)\lesssim p^{k+1}$. We still use $\cV$ to denote $\bigcup_{W\in\cW} \cV_W$. And we see that
     \[ \#\cV\gtrapprox p^t. \]

    Fix a $W\in\cW$. We want to estimate $\bigcup_{V\in\cV_W} Y(V)(\subset W\times \Fp\cong \Fp^{k+1})$. This is some sort of Furstenberg set for $k$-planes in $(k+1)$-dimensional space. Also, an important property is that each $V\in\cV_W$ is not parallel to the last coordinate. Therefore, any line parallel to $\bzz^{n-1}\times \Fp$ will intersect $V$ at a single point. We will exploit some ``quasi-product" structure of this set. 

    \begin{lemma}\label{lems1s2}
        Assume \eqref{conditionprop2} holds for $n'=k+1$.
        Suppose $E=\bigcup_{V\in\cV} Y(V)$ is a $(s,t_2;k,k+1)$-set. In addition, we assume that each $V\in\cV$ intersects $\bzz^k\times \Fp$ at a single point. Then there exist dyadic numbers $p^{s_1},p^{s_2}$ such that
        \[ p^{s}\lesssim p^{s_1}\le p^k ,  \]
        \[ 1\le p^{s_2}\le p, \]
         and
        \[ p^{s_1}p^{s_2}\gtrsim_\e p^{\bF(s,t_2;k+1,k)-\e}, \]
        for any $\e>0$.
        In addition, $E$ contains a subset of form
        \[ \bigsqcup_{\xi\in\Xi}Y_\xi, \]
        where $\Xi\subset \Fp^{k}\times \bzz$ with $\#\Xi\sim p^{s_1}$, and $Y_\xi\subset E\cap\bigg(\xi+(\bzz^{k}\times \Fp)\bigg)$ with $\#Y_\xi\sim p^{s_2}$.
    \end{lemma}

    \begin{proof}
        For $\xi\in \Fp^{k}\times \bzz$, define $E(\xi):=E\cap\bigg(\xi+(\bzz^{k}\times \Fp)\bigg)$.
        By pigeonholing on vertical slices of $E$, we can find $\Xi\subset \Fp^{k}\times \bzz$ so that for all $\xi\in\Xi$, $E(\xi)$ have comparable cardinality. We may denote $\#\Xi\sim p^{s_1}, \#E(\xi)\sim p^{s_2}$ for two dyadic numbers $p^{s_1},p^{s_2}\ge 1$. We can also assume $p^{s_1} p^{s_2}\gtrapprox \#E$. Hence, assuming \eqref{conditionprop2} holds for $n'=k+1$, we have
        \[ p^{s_1} p^{s_2} \gtrsim_\e p^{\bF(s,t_2;k+1,k)-\e}. \]
        However, we still need the condition $p^{s_1}\gtrsim p^{s}$. This is handled by the same trick as in the proof of Lemma \ref{lemuv}. We will perform an algorithm to find a sequence $E=E_1\supset E_2\supset\cdots\supset E_m$, and disjoint subsets $\Xi_1,\Xi_2\cdots,\Xi_m$ of $\Fp^k\times \bzz$.
        To start with, let $E_1=E$. By pigeonholing, we can find $\Xi_1\subset \Fp^{k}\times \bzz$, such that $\#\Xi_1\sim p^{u_1}$, $\#E(\xi)\sim p^{v_1}$ for each $\xi\in\Xi$, and
        \[ p^{u_1}p^{v_1}\gtrsim_\e \#E_1\gtrsim_\e p^{\bF(s,t_2;2,1)-\e}. \]
        Suppose we have defined $E_{i}, \Xi_{i}$ for $1\le i\le j-1$. If $\sum_{i=1}^{j-1}p^{u_i}\ll p^{s}$, then we define
        \[ E_j:=E\setminus \bigcup_{1\le i\le j-1}\bigsqcup_{\xi\in\Xi_{i}}E(\xi). \]
        Since each $V\in\cV$ intersects $\xi+(\bzz^{k}\times \Fp)$ (and hence $E(\xi)$) at a single point, we see that $E_j$ is still a $(s,t_2;2,1)$-set. By the same pigeonholing, we find $\Xi_j\subset (\Fp^{k}\times \bzz)\setminus \bigcup_{1\le i\le j-1}\Xi_i$, so that $\#\Xi_j\sim p^{u_j}$, $\#E(\xi)\sim p^{v_j}$ for each $\xi\in\Xi_j$, and
        \[  p^{u_j}p^{v_j}\gtrsim_\e p^{\bF(s,t_2;2,1)-\e}. \]
        If $\sum_{i=1}^{j-1}p^{u_i}\gtrsim p^{s}$, then we stop and let $m=j-1$.

        The algorithm will stop in finite steps. Denote $\Xi:=\bigcup_{1\le i\le m}\Xi_i$. Let $p^{s_1},p^{s_2}$ be dyadic numbers so that $p^{s_2}=\min_{1\le i\le m}p^{v_i}$, $p^{s_1}\sim\#\Xi$. For each $\xi\in\Xi$, let $Y_\xi$ be a subset of $E(\xi)$ with $\#Y_\xi\sim p^{s_2}$. This is doable, since $\#E(\xi)\gtrsim p^{s_2}$ if $\xi\in\Xi$. One can check
        \[ p^{s_1} p^{s_2}\gtrsim_\e p^{\bF(s,t_2;2,1)-\e}. \]
        This is proved in the following way. Suppose $p^{s_2}=p^{v_i}$, then
        \[ p^{s_1}p^{s_2}=p^{s_1}p^{v_i}\ge p^{u_i}p^{v_i}\gtrsim_\e p^{\bF(s,t_1;2,1)-\e}. \]
        Also, one can check
        \[ p^{s_1}=\sum_{i=1}^mp^{u_i}\gtrsim p^{s}. \]

    \end{proof}

    Return to $ E_W:=\bigcup_{V\in\cV_W} Y(V)$ (which lies in $W\times \Fp\cong \Fp^{k+1}$). Recall $\#\cV_W\gtrapprox p^{t_2}$ and $\#Y(V)\sim p^{s} $.
    We see that $ E_W$ is a $(s,(t_2-\e)_+;k+1,k)$-set.

    By Lemma \ref{lems1s2}, there exist dyadic numbers $p^{s_1(W)}, p^{s_2(W)}$, so that $ E_W$ contains a subset of form
    \[ \bigsqcup_{\xi\in \Xi_W} Y_W(\rho_\xi). \]
Here, $\Xi_W\subset W(\subset\Fp^{n-1}\times \bzz)$ with $\#\Xi_W\sim p^{s_1(W)}$; $\rho_\xi:=\xi+(\bzz^{n-1}\times \Fp)$ and $Y_W(\rho_\xi)\subset \rho_\xi$ with $\#Y_W(\rho_\xi)\sim p^{s_2(W)}$. We also have
\[ p^{s_1(W)}\gtrsim p^{s}. \]
\[ p^{s_1(W)}p^{s_2(W)}\gtrsim_\e p^{\bF(s,(t_2-\e)_+;k+1,k)-\e}. \]
By pigeonholing on $(s_1(W),s_2(W))$ again, we can throw away some $W\in\cW$ so that the cardinality of $\cW$ is still $\gtrapprox p^{t_1}$, and assume that there exist $s_1,s_2$ such that
\[ (s_1(W),s_2(W))=(s_1,s_2),\ \textup{~for~all~}W\in\cW, \]
and
\[ p^{s_1}\gtrsim p^{s}. \]
\[ p^{s_1}p^{s_2}\gtrsim_\e p^{\bF(s,(t_2-\e)_+;2,1)-\e}. \]
By Lemma \ref{lipint}, we have
\[ p^{s_1}p^{s_2}\gtrsim_\e p^{\bF(s,t_2;2,1)-\e}. \]

Finally, we are about to estimate the lower bound of the cardinality of $E=\bigcup_{W\in\cW}\bigcup_{V\in\cV_W}Y(V).$ This $E$ is not the original one, but the one after pigeonholing. We write
\begin{equation}
    \begin{split}
        E=\bigcup_{W\in\cW}E_W&\supset\bigcup_{W\in\cW} \bigcup_{\xi\in\Xi_W}Y_W(\rho_\xi).
    \end{split}
\end{equation} 
Since $\#Y_W(\rho_\xi)\sim p^{s_2}$, we see that
\[ \#E\gtrsim \#(\bigcup_{W\in\cW}\Xi_W) \cdot p^{s_2}. \]
Since $\bigcup_{W\in\cW}\Xi_W$ is a $(s_1,t_1;n-1,k)$-set, by \eqref{conditionprop2}, we have
\[ \#E\gtrsim_\e p^{\bF(s_1,t_1;n-1,k)-\e}p^{s_2}. \]

\end{proof}

The next lemma is a recursive inequality of $\bF(s,t;n,k)$, which can be used to bound the right-hand side of \eqref{lempig2ineq}.
\begin{lemma}\label{recursionF2}
    Suppose $(s,t;n,k)$ is admissible and $n\ge k+2$.
    Suppose that there exist numbers $t_1,t_2,s_1$, such that
    \begin{equation}
        \begin{cases}
            t_1\in [0,(k+1)(n-k-1)];\\
            t_2\in [0,k+1];\\
            s_1\in [s,k];
        \end{cases}
    \end{equation}
    and
    \begin{equation}
            t_1+t_2= t;
    \end{equation}

    Then, 
    \begin{equation}
        \bF(s_1,t_1;n-1,k)+\max\{\bF(s,t_2;k+1,k)-s_1,0\}\ge \bF(s,t;n,k).
    \end{equation}

\end{lemma}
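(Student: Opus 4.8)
The plan is a direct, if long, verification by cases, carried out in the same spirit as the proof of Lemma~\ref{recursionF1}. Write $s=d+\si$ with $d\in\N$, $\si\in(0,1]$, and for $\si\in(0,1]$ set
\[
g_\si(x):=\min\bigl\{x,\ \tfrac12(\si+x),\ 1\bigr\}\qquad(x\ge 0),
\]
so that, in the notation of Definition~\ref{deffurindex}(c)--(d), $\bF(s,t;n,k)=s+m+g_\si(\tau)$ whenever $t=(k-d-1)(n-k)+(d+2)m+\tau$ with $m\in\{0,\dots,n-k-1\}$ and $\tau\in(0,d+2]$. I will use three elementary properties of $g_\si$: it is concave and nondecreasing with $g_\si(0)=0$; it satisfies $g_\si(x)=1$ for $x\ge 2-\si$, and $g_\si\le g_{\si'}$ pointwise when $\si\le\si'$; and, being concave, it obeys $g_\si(x_1)+g_\si(x_2)\ge g_\si(y_1)+g_\si(y_2)$ whenever $x_1+x_2=y_1+y_2$ and $\{x_1,x_2\}\subseteq[\min\{y_1,y_2\},\max\{y_1,y_2\}]$ --- in particular $g_\si$ is subadditive. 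Set $\bG:=\bF(s_1,t_1;n-1,k)+\max\{\bF(s,t_2;k+1,k)-s_1,0\}$; the goal is $\bG\ge\bF(s,t;n,k)$.

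First I would dispose of two degenerate situations. If $s=0$, Lemma~\ref{easybound} gives $\bF(s_1,t_1;n-1,k)\ge s_1+\max\{0,t_1-k(n-1-k)\}$, while $\bF(0,t_2;k+1,k)=\max\{0,t_2-k\}$ and $\bF(0,t;n,k)=\max\{0,t-k(n-k)\}$; adding these, using $\max\{0,A\}+\max\{0,B\}\ge\max\{0,A+B\}$ together with $k(n-1-k)+k=k(n-k)$, and cancelling $s_1$, yields $\bG\ge\max\{0,t-k(n-k)\}$. If $t\le(k-d-1)(n-k)$ (so $\bF(s,t;n,k)=s$), then $\bG\ge\bF(s_1,t_1;n-1,k)\ge s_1\ge s$ when the $\max$ term vanishes, and $\bG=\bF(s_1,t_1;n-1,k)+\bF(s,t_2;k+1,k)-s_1\ge s_1+s-s_1=s$ otherwise. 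So I may assume $s>0$ and $\bF(s,t;n,k)=s+m+g_\si(\tau)$ as above. Two observations are then recorded. For $\bF(s,t_2;k+1,k)$ the middle parameter of Definition~\ref{deffurindex} is forced to be $0$ (its range is $\{0,\dots,(k+1)-k-1\}=\{0\}$), so $\bF(s,t_2;k+1,k)$ equals $s$ if $t_2\le k-d-1$ and equals $s+g_\si(\tau_2)$, with $\tau_2:=t_2-(k-d-1)\in(0,d+2]$, otherwise; in either case $\bF(s,t_2;k+1,k)\in[s,s+1]$. Moreover the hypothesis $t_1\le(k+1)(n-1-k)$ forces $t_2=t-t_1\ge(k+1)-(d+2)(n-k-m)+\tau$, so in the boundary case $m=n-k-1$ one gets $\tau_2\ge\tau$.

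Next comes the dichotomy on the middle term. If $\bF(s,t_2;k+1,k)\le s_1$, then $\bG=\bF(s_1,t_1;n-1,k)$, and combining $\bF(s,t_2;k+1,k)\le s_1$ with Lemma~\ref{easybound} (in dimension $k+1$) bounds $t_2$ from above, hence $t_1$ from below, while the second observation bounds $t_2$ from below; one then compares $\bF(s_1,t_1;n-1,k)$ with $\bF(s,t;n,k)$ directly, splitting on the interval containing $t_1$ relative to the thresholds $(k-d_1-1)(n-1-k)$ (where $s_1=d_1+\si_1$), and using $s_1\ge s$, the monotonicity $g_{\si_1}\ge g_\si$, and the concavity property above (the crucial instance, in the boundary case $m=n-k-1$, being $g_\si(\tau_2)+g_\si((d+2)+\tau-\tau_2)\ge 1+g_\si(\tau)$, valid because both arguments on the left lie in $[\tau,d+2]$, $g_\si(d+2)=1$, and $s_1-s\ge g_\si(\tau_2)$). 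If instead $\bF(s,t_2;k+1,k)>s_1\ge s$, then $\bF(s,t_2;k+1,k)=s+g_\si(\tau_2)$ and the inequality to prove becomes $\bF(s_1,t_1;n-1,k)+(s-s_1)+g_\si(\tau_2)\ge s+m+g_\si(\tau)$; here $d_1\in\{d,d+1\}$ since $s\le s_1<s+1$. When $d_1=d$ (so $\si_1\ge\si$), the two indices $\bF(\cdot,t_1;n-1,k)$ share a threshold, so $\bF(s_1,t_1;n-1,k)-s_1\ge\bF(s,t_1;n-1,k)-s$, reducing to the case $s_1=s$; and there $t_1=(k-d-1)(n-1-k)+(d+2)m+(\tau-\tau_2)$, so after at most one carry from $m$ (when $\tau\le\tau_2$) one has an explicit formula for $\bF(s,t_1;n-1,k)$, and the inequality follows from subadditivity of $g_\si$ if $\tau>\tau_2$, from the concavity property with endpoints $\tau$ and $d+2$ if $\tau\le\tau_2$ and $m\ge1$, and from monotonicity of $g_\si$ if $m=0$. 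When $d_1=d+1$ (which forces $g_\si(\tau_2)>1+\si_1-\si$), a short direct computation of $\bF(s_1,t_1;n-1,k)-s_1$, whose threshold is now smaller by $n-1-k$, reduces the claim to an inequality purely among $g_{\si_1},g_\si$, again settled by the same concavity bookkeeping.

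There is no single hard idea here; the entire difficulty lies in the case bookkeeping of the last step. The most delicate points are the first branch of the dichotomy away from the boundary case $m=n-k-1$ (where one must track the position of $t_1$ against every threshold $(k-d_1-1)(n-1-k)$, for all admissible $d_1$) and the subcase $d_1=d+1$ (where the shift of threshold by $n-1-k$ interacts with the change of period from $d+2$ to $d+3$ inside $g$, and one must use the sharp constraint $g_\si(\tau_2)>1+\si_1-\si$ rather than the crude $g_\si(\tau_2)>1-\si$). Everything else is routine manipulation of the piecewise-linear data, exactly as in the proof of Lemma~\ref{recursionF1}.
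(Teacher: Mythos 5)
Your plan is correct in spirit and attacks the same recursive inequality in the same basic way as the paper — reduce to the canonical expressions and verify a collection of piecewise-linear inequalities — but the organization is genuinely different and, in places, cleaner. The paper verifies the needed inequalities one at a time with bullet-point tables, splitting first on $t_2\le k-d-1$ versus $t_2>k-d-1$, and then on $\si_1\in[\si,1]$ versus $\si_1>1$ (where it writes $s_1=d+\si_1$ and peels off the excess $\De$ by a somewhat ad hoc computation, cf.\ Subcase (2.2)). You instead package the piecewise minimum as $g_\si$, record its concavity/subadditivity once, and then split on whether $\max\{\bF(s,t_2;k+1,k)-s_1,0\}$ is $0$ or positive; this is a more natural dichotomy, and the two ``core'' inequalities you isolate, namely subadditivity $g_\si(\tau_2)+g_\si(\tau-\tau_2)\ge g_\si(\tau)$ and the carry inequality $g_\si(\tau_2)+g_\si((d+2)+\tau-\tau_2)\ge 1+g_\si(\tau)$, are precisely the contents of the paper's Subsubcases (2.1.1) and (2.1.2). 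So you rediscover the right inequalities, and the abstraction buys uniformity: one concavity statement replaces several tables.

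However, what you have is a plan, not a proof, and the two places you yourself flag as ``delicate'' are exactly the places where a gap could hide. In the first branch (where $\bG=\bF(s_1,t_1;n-1,k)$) you invoke ``the monotonicity $g_{\si_1}\ge g_\si$,'' but this only holds when $\si_1\ge\si$; since $s_1\ge s$ constrains the sum $d_1+\si_1$ rather than $\si_1$ alone, it is entirely possible to have $d_1>d$ and $\si_1<\si$ (e.g.\ $s=0.8,\ s_1=1.2$), in which case $g_{\si_1}<g_\si$ pointwise and the deficit has to be made up from the integer excess $d_1-d$ plus the lowered threshold $(k-d_1-1)(n-1-k)$ — the argument is not spelled out. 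Likewise, the subcase $d_1=d+1$ in your second branch is dispatched with ``a short direct computation ... reduces the claim to an inequality purely among $g_{\si_1},g_\si$,'' which is not exhibited; this is the territory of the paper's Subcase (2.2), and it is where the coefficient of $m_1$ changes from $d+2$ to $d_1+2$, interacting with the threshold shift in a way that needs actual verification. Neither of these is visibly wrong, but both are left as ``bookkeeping,'' and until that bookkeeping is written out the proposal should be regarded as a correct outline rather than a completed proof.
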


Let us quickly see how Lemma \ref{lempig2} and Lemma \ref{recursionF2} combined to prove Proposition \ref{proplower2}.
\begin{proof}[Proof of Proposition \ref{proplower2} assuming Lemma \ref{lempig2} and Lemma \ref{recursionF2}]\hfill

If $s=0$ or $t=0$, by Proposition \ref{s0} or \ref{t0}, we are done. So, we suppose $s,t>0$.

Fix $0<\e<\min\{\si/2,t/2, 1/2\}$, where $s=d+\si$ with $d\in\N, \si\in(0,1]$. We just need to prove
\begin{equation}
        \#E\gtrsim_\e p^{\bF(s,t;n,k)-C\e}
    \end{equation} 
for any $(s,t;n,k)$-set $E$. Here, $C$ is a large number.    

We only need to prove for large enough $p$, since for small $p$ we can choose the constant to be large. 

Suppose \eqref{conditionprop2} holds. If $E$ is a $(s,t;n,k)$-set, by Lemma \ref{lempig2}, there exist numbers $t_1,t_2,s_1,s_2$, such that
    \begin{equation}
        \begin{cases}
            t_1\in [0,(k+1)(n-k-1)], t_2\in [0,k+1];\\
            s_1\in [s-\e,k], s_2\in [0,1];
        \end{cases}
    \end{equation}
    and
    \begin{equation}
        \begin{cases}
            t_1+t_2\ge t-\e;\\
            s_1+s_2\ge \bF(s,t_2;k+1,k)-\e;
        \end{cases}
    \end{equation}
    and
    \begin{equation}
        \#E \gtrsim_\e p^{\bF(s_1,t_1;n-1,k)+s_2-\e}\ge p^{\bF(s_1,t_1;n-1,k)+\max\{\bF(s,t_2;k+1,k)-s_1,0\}-2\e}.
    \end{equation}
    
Let $t'=t-\e$ which is $>0$ by assumption.
Choose $0\le t_i'\le t_i$ for $i=1,2$, so that $t_1'+t_2'=t'$. Choose $s_1\le s_1'\le s_1+\e$, so that $s_1'\in [s,k]$. Then $t',t_1',t_2',s_1'$ satisfy the condition in Lemma \ref{recursionF2}, we obtain
\[ \bF(s_1',t_1';n-1,k)+\max\{\bF(s,t_2';k+1,k)-s_1,0\}\ge \bF(s,t';n,k). \]
By the monotonicity of $t\mapsto\bF(s,t;n,k)$, Lemma \ref{lipint} and Lemma \ref{lipins}, we have
\[ \#E\gtrsim_\e p^{\bF(s,t;n,k)-C\e}. \]

\end{proof}

It remains to prove Lemma \ref{recursionF2}.
\begin{proof}[Proof of Lemma \ref{recursionF2}]\hfill

    When $s=0$, we have  $\bF(0,t;n,k)=\max\{0,t-k(n-k)\}$. By Lemma \ref{easybound}, we have
    \[ \bF(s_1,t_1;n-1,k)+\bF(0,t_2;k+1,k)-s_1\ge s_1+t_1-k(n-k-1)+t_2-k-s_1= t-k(n-k). \]

    When $s>0$, write $s=d+\si$. If also $t\le (k-d-1)(n-k)$, then
    \[ \bF(s,t;n,k)=s. \]
    By Lemma \ref{easybound},
    \[ \bF(s_1,t_1;n-1,k)+\bF(s,t_2;k+1,k)-s_1\ge s_1+s-s_1=s. \]
    
    So we consider $s>0$ and $t>(k-d-1)(n-k)$.
    Denote
    \[ \bF:=\bF(s_1,t_1;n-1,k)+\max\{\bF(s,t_2;k+1,k)-s_1,0\}. \]
    We have both 
    \begin{equation}\label{induction2}
        \bF\ge \bF(s,t_2;k+1,k)+\bF(s_1,t_1;n-1,k)-s_1.
    \end{equation}
    and
    \begin{equation}
        \bF\ge \bF(s_1,t_1;n-1,k).
    \end{equation}

    Write the canonical expression
    \begin{equation}
    \begin{cases}
        s=d+\si\\
        t=(k-d-1)(n-k)+(d+2)m+\tau,
    \end{cases}
\end{equation} 
    where $d\in\{0,\dots,k-1\}$, $\si\in (0,1]$,  $m\in\{0,\dots,n-k-1\}$, $\tau\in (0,d+2]$. We have
    \[ \bF(s,t;n,k)=s+m+\min\{\tau,\frac12(\si+\tau),1\}. \]

Next, we analyze $\bF(s,t_2;k+1,k)$.
\medskip

    \fbox{Case (1): $t_2\le k-d-1$} We use
    \[ \bF\ge \bF(s_1,t_1;n-1,k) \ge \bF(s,t_1;n-1,k). \]
Since $t_2\le k-d-1$, we have
\[ t_1\ge (k-d-1)(n-k-1)+(d+2)m+\tau. \]

Therefore,
\[ \bF\ge\bF(s,(k-d-1)(n-k-1)+(d+2)m+\tau;n-1,k)= s+m+\min\{ \tau,\frac12(\si+\tau), 1 \}. \]
Therefore,
\[ \bF\ge\bF(s,t;n,k). \]

\fbox{Case (2): $t_2>k-d-1$} We write $t_2=k-d-1+\tau_2$, where $\tau_2\in (0,d+2]$. 
We have
\[ \bF(s,t_2;k+1,k)= s+\min\{ \tau_2,\frac12(\si+\tau_2),1 \}. \]

Since $s_1\ge s$, write 

\[s_1=d+\si_1,\]
with $\si_1\ge \si$.

\fbox{Subcase (2.1): $\si_1\in [\si,1]$}  We have
\[ t_1=t-t_2= (k-d-1)(n-k-1)+(d+2)m+\tau-\tau_2. \]

\fbox{Subsubcase (2.1.1): $\tau-\tau_2>0$} We have the canonical expression for $(s_1,t_1)$:
\begin{equation*}
    \begin{cases}
        s_1=d+\si_1,\\
        t_1=t-t_2= (k-d-1)(n-k-1)+(d+2)m+\tau-\tau_2.
    \end{cases}
\end{equation*}
Therefore,
\[ \bF(s_1,t_1;n-1,k)=s_1+m+\min\{ \tau-\tau_2,\frac12(\si_1+\tau-\tau_2),1 \}. \]
Therefore,
\[ \bF\ge s+m+\min\{ \tau_2,\frac12(\si+\tau_2),1 \}+\min\{\tau-\tau_2,\frac{1}{2}(\si+\tau-\tau_2),1\}. \]
We just need to show
\[ \min\{ \tau_2 ,\frac12(\si+\tau_2),1 \}+\min\{\tau-\tau_2 ,\frac{1}{2}(\si+\tau-\tau_2 ),1\}\ge \min\{\tau ,\frac12(\si+\tau ),1\}. \]
To show $\min_i a_i+\min_j b_j\ge \min_k c_k$, it suffices to show $\forall i, j, \exists k$, s.t. $a_i+b_j\ge c_k$. We can check:

$\bullet$ $\tau_2+\tau-\tau_2\ge \tau$.

$\bullet$ $\tau_2+\frac12(\si+\tau-\tau_2)\ge \frac12(\si+\tau)$.

$\bullet$ $\frac12(\si+\tau_2)+\tau-\tau_2\ge \frac12(\si+\tau).$

$\bullet$ $\frac12(\si+\tau_2)+\frac12(\si+\tau-\tau_2)\ge \frac12(\si+\tau)$.

\medskip

\fbox{Subsubcase (2.1.2): $\tau-\tau_2\le 0, m\ge 1$} We write $t_1=(k-d-1)(n-k-1)+(d+2)(m-1)+d+2+\tau-\tau_2$. We have
\[\bF(s_1,t_1;n-1,k)=s_1+m-1+\min\{\tau-\tau_2+d+2,\frac12(\si_1+\tau-\tau_2+d+2),1\}.\]
Therefore,
\[ \bF\ge s+m-1+\min\{\tau_2,\frac12(\si+\tau_2),1\}+\min\{\tau-\tau_2+d+2,\frac12(\si_1+\tau-\tau_2+d+2 ),1\}. \]
We just need to prove
\[ \min\{\tau_2,\frac12(\si+\tau_2),1\}+\min\{\tau-\tau_2+d+1,\frac12(\si+\tau-\tau_2+d ),0\}\ge \min\{\tau,\frac12(\si+\tau),1\}. \]
This is not hard to verify by noting $\tau\le\tau_2\le d+2$.

$\bullet$ $\tau_2+\tau-\tau_2+d+1\ge \tau$.

$\bullet$ $\tau_2+\frac12(\si+\tau-\tau_2+d)\ge \frac12(\si+\tau)$.

$\bullet$ $\tau_2+0\ge \tau$.

$\bullet$ $\frac12(\si+\tau_2)+\tau-\tau_2+d+1\ge \tau+d+1-\frac12\tau_2\ge \tau$.

$\bullet$ $\frac12(\si+\tau_2)+\frac12(\si+\tau-\tau_2+d)\ge \frac12(\si+\tau)$.

$\bullet$ $\frac12(\si+\tau_2)+0\ge \frac12(\si+\tau)$.

$\bullet$ $1+\tau-\tau_2+d+1\ge \tau$.

$\bullet$ $1+\frac12(\si+\tau-\tau_2+d)\ge \frac12(\si+\tau)$.

$\bullet$ $1+0\ge 1$.

\medskip
\fbox{Subsubcase (2.1.3): $\tau-\tau_2\le 0, m=0$} We have $t_1\le (k-d-1)(n-k-1)$, so 
\[ \bF(s_1,t_1;n-1,k)=s_1. \]
Therefore,
\[ \bF\ge \bF(s,t_2;k+1,k)= s+\min\{\tau_2,\frac12(\si+\tau_2),1\}. \]
Also, $\bF(s,t;n,k)=s+\min\{\tau,\frac12(\si+\tau),1\}$.
We see that $\bF\ge \bF(s,t;n,k)$ since $\tau_2\ge \tau$.

\fbox{Subcase(2.2): $\si_1>1$} Write $s_1=(d+\De)+(\si_1-\De),$ where $\De\in \N^+$ so that $\si_1-\De\in (0,1]$. We have
\begin{align*}
    t_1 &= (k-d-1)(n-k-1)+(d+2)m+\tau-\tau_2\\
    &= (k-d-\De-1)(n-k-1)+(d+2)m+\tau-\tau_2+\De(n-k-1)\\
    &=(k-d-\De-1)(n-k-1)+(d+2)m_1+\tau_1.
\end{align*} 
Here, the last line is the canonical expression for $t_1$ (see \eqref{expression}), in which $0\le m_1\le n-k-2$, $\tau_1\in (0,d+\De+2]$.

If $(d+2)m+\tau-\tau_2\le 0$, then $m=0, \tau_2\ge \tau$. By the same reasoning as in Subsubcase (2.1.3), we have $\bF\ge \bF(s,t;n,k)$.

Hence, we can assume $(d+2)m+\tau-\tau_2> 0$
We have
\[ \bF(s_1,t_1;n-1,k)= s_1+m_1+\min\{ \tau_1 ,\frac12(\si_1-\De+\tau_1 ),1 \}. \]

Since $(d+2)m+\tau-\tau_2\le(d+2)m_1+\tau_1$ and $\tau,\tau_1,\tau_2\in (0,d+2]$, we have $m_1\ge m-1$.
If $m_1\ge m+1$, then $\bF\ge \bF(s_1,t_1;n-1,k)\ge s+m+1\ge \bF(s,t;n,k)$.
We only need to consider two cases: $m_1=m, m_1=m-1$.

We have
\[ \bF\ge s+\min\{ \tau_2 ,\frac12(\si+\tau_2 ),1 \}+m_1+\min\{ \tau_1,\frac12(\si_1-\De+\tau_1 ),1 \}. \]

\fbox{Subsubcase (2.2.1): When $m_1=m,\tau_1=\tau-\tau_2+\De(n-k-1)$} We have
\[ \bF\ge s+m+\min\{ \tau_2 ,\frac12(\si+\tau_2 ),1 \}+\min\{(\tau-\tau_2)\vee 0 ,\frac{1}{2}(\si+\tau-\tau_2 ),1\}, \]
where we used $\tau_1\ge (\tau-\tau_2)\vee 0,$ and $\si_1-\De+\tau_1\ge \si-\De+\tau-\tau_2+\De(n-k-1)\ge \si+\tau-\tau_2$.

We just need to check
\[ \min\{ \tau_2 ,\frac12(\si+\tau_2 ),1 \}+\min\{(\tau-\tau_2)\vee 0 ,\frac{1}{2}(\si+\tau-\tau_2),1\}\ge \min\{\tau ,\frac12(\si+\tau ),1\}. \]
A less obvious case will be $\frac12(\si+\tau_2)+(\tau-\tau_2)\vee0\ge \frac12(\si+\tau_2)+\frac12(\tau-\tau_2)\ge \frac12(\si+\tau)$. We leave out the verification of other cases.

\medskip

\fbox{Subsubcase (2.2.2): When $m_1=m-1, \tau_1=\tau-\tau_2+d+2+\De(n-k-1)$} Since $n\ge k+2$, we have
\[ \bF\ge s+m-1+\min\{\tau_2  ,\frac12(\si+\tau_2  ),1\}+\min\{\tau-\tau_2+d+2+\De  ,\frac12(\si+\tau-\tau_2+d+2 ),1\}. \]
Since $\tau_1\le d+2$, we have $\tau_2\ge \tau$.
Note that 
\[\min\{\tau-\tau_2+d+2+\De  ,\frac12(\si+\tau-\tau_2+d+2 ),1\}-1\ge \min\{(\tau-\tau_2+1)\vee 0  ,\frac12(\si+\tau-\tau_2 ),0\}.\]

We just need to prove
\[ \min\{\tau_2  ,\frac12(\si+\tau_2  ),1\}+\min\{(\tau-\tau_2+1)\vee 0  ,\frac12(\si+\tau-\tau_2 ),0\}\ge \min\{\tau  ,\frac12(\si+\tau),1\}. \]
This is not hard to verify by noting $\tau_2\ge\tau$. A less obvious case is $\frac12(\si+\tau_2)+(\tau-\tau_2+1)\vee 0\ge \frac12(\si+\tau_2)+\frac12(\tau-\tau_2+1)\ge \frac12(\si+\tau)$.

\end{proof}

\begin{proof}[Proof of Theorem \ref{lower bound}]
    Finally, we see that Theorem \ref{lower bound} can be obtained from Proposition \ref{proplower1} and Proposition \ref{proplower2} using induction on $(n,k)$.
\end{proof}

\

\section{Exceptional set estimate: lower bound}\label{sec6}

We prove Theorem \ref{exceptionallowerthm} in this section.
The result in this section has been studied by the author \cite{gan2024exceptional} under the Euclidean setting, and the statement there is slightly different from this paper. We still provide the details for the finite-field setting, though they are essentially the same.
 
\begin{proof}[Proof of Theorem \ref{exceptionallowerthm}]
Recall that $(n,k)=(2,1)$ and $\frac{a}{2}<s\le \min\{1,a\}$ was considered in Proposition \ref{oberlin2d}. We denote $\bA_{s,a}\subset \Fp^2$ to be the set constructed there which satisfies $\#\bA_{s,a}\gtrsim p^a$, and
\[ E_s(\bA_{s,a};2,1)\gtrsim p^{2s-a}.\]

\medskip

Next, we prove for general $(n,k)$. Note that Type 1 and Type 4 are easy, so we consider Type 2 and Type 3. Recall the canonical expression as in Definition \ref{defM}: $a=m+\beta,s=l+\ga$.

\medskip

\fbox{Type 2} We have $l+1\le m\le n+l-k$. For $\ga$, we only use $\ga>\beta$. We first show 
the existence of $A$ with $\#A\sim p^a$ such that $\#E_s(A;n,k)\gtrsim p^{k(n-k)-(m-l)(k-l)}$. 

Choose $A=\Fp^m\times I\times \bzz^{n-m-1}$, where $I\subset \Fp$ has cardinality $p^\beta$. For simplicity, we just write $A=\Fp^m\times I$. We want to find those $V\in G(n-k,\Fp^n)$ such that $\#(\pi_V^*(A))< p^{l+\ga}$. We note that if $V$ satisfies $\#\pi_V^*(\Fp^m)\le p^l$, then $\#\pi_V^*(\Fp^m\times I)\le p^{l+\beta}<p^{l+\ga}$, which means $V\in E_s(A;n,k)$. This shows that
\[ E_s(A;n,k)\supset \{V\in G(n-k,n): \#\pi_V^*(\Fp^m)\le p^l\}. \]
By Lemma \ref{easylem2}, the right hand side has cardinality $\sim p^{k(n-k)-(k-l)(m-l)}$.

\medskip

\fbox {Type 3, $\ga\in(\frac{\beta}{2},\beta]$} We have $n-m-1\ge k-l, \ga> \frac{\beta}{2}$. We show 
the existence of $A$ with $\#A\sim p^a$ such that $\#E_s(A;n,k)\gtrsim p^{k(n-k)-(m+1-l)(k-l)+2\ga-\beta}$.

We choose $A=\bA_{\ga,\beta}\times \Fp^m\times \bzz^{n-m-2}$, where $\bA_{\ga,\beta}\subset \Fp^2$ is such that $\#\bA_{\ga,\beta}\sim p^\beta$ and $\#E_\ga(\bA_{\ga,\beta};2,1)\gtrsim p^{2\ga-\beta}$. We will use $\Fp^2$ to denote $\Fp^2\times \bzz^{n-2}$. We use $\Fp^m$ to denote $\bzz^2\times \Fp^m\times \bzz^{n-m-2}$. We use $\theta$ to denote the lines in $G(1,\Fp^2)$. Define
\[ \cU_\theta=\{ V\in G(n-k,\Fp^n): \#(\pi^*_V(\theta \oplus \Fp^m))\le p^l, \#(\pi_V^*(\Fp^m))=p^l, \#(\pi_V^*(\Fp^2\times \Fp^m))=p^{l+1} \}, \]
\[\cV_\theta=\{ V\in G(n-k,\Fp^n): \#(\pi_V^*(\theta \oplus \Fp^m))\le p^l\},\]
\[\cW_\theta=\{V\in \cV_\theta: \#(\pi_V^*(\Fp^m))\le p^{l-1}\}\cup \{V\in \cV_\theta: \#(\pi_V^*(\Fp^2\times \Fp^m)\le p^l \}.\]
It is not hard to see $\cU_\theta=\cV_\theta\setminus \cW_\theta$. 

Let $\bE_{\ga,\beta}:=E_\ga(\bA_{\ga,\beta};2,1)=\{ \theta\in G(1,\Fp^2): \#(\pi_{\theta}^*(\bA_{\ga,\beta}))<p^\ga \}$. We claim that if $\theta\in \bE_{\ga,\beta}$, then $\cV_\theta\subset E_{l+\ga}(A)$. In other words, $V\in \cV_\theta$ satisfies $\#(\pi_V^*(A))<p^{l+\ga}$. We first note that by definition $\bA_{\ga,\beta}\subset \bigcup_{L\in\pi_{\theta}^*(\bA_{\ga,\beta})}L$, so
\[ A=\bA_{\ga,\beta}\times \Fp^m\subset (\bigcup_{L\in\pi_{\theta}^*(\bA_{\ga,\beta})}L)\times \Fp^m= (\bigcup_{L\in\pi_{\theta}^*(\bA_{\ga,\beta})}L\times \Fp^m). \]
Therefore,
\[ \#(\pi_V^*(A))\le \#(\pi_{\theta}^*(\bA_{\ga,\beta}))\cdot\#(\pi_V^*(\theta\oplus\Fp^m))< p^{\ga+l}. \]

We see that $E_{l+\ga}(A;n,k)\supset \bigcup_{\theta\in \bE_{\ga,\beta}} \cV_\theta.$
$\cV_\theta$ may intersect with each other, so we delete a tiny portion $\cW_\theta$ from $\cV_\theta$, so that the remaining part $\cU_\theta=\cV_\theta\setminus \cW_\theta$ are disjoint with each other. Then we have
\[ E_{l+\ga}(A;n,k)\supset \bigsqcup_{\theta\in\bE_{\ga,\beta}}\cU_\theta. \]
To make the argument precise, we first apply Lemma \ref{easylem2} to see \[\#\cV_\theta\sim p^{k(n-k)-(m+1-l)(k-l)}.\]
To calculate $\#\cW_\theta$, we apply Lemma \ref{easylem2} again to see
\[ \#\{V\in G(n-k,n):\dim(\pi_V^*(\Fp^m))\le p^{l-1}\}\sim p^{k(n-k)-(m-l+1)(k-l+1)}, \]
and
\[ \#\{V\in G(n-k,n):\dim(\pi_V^*(\Fp^2\times \Fp^m))\le p^l\}\sim p^{k(n-k)-(m+2-l)(k-l)}. \]
Therefore, $\#\cW_\theta\ll \#\cV_\theta$ (when $p$ is large enough) and hence
\[ \#\cU_\theta\sim\#\cV_\theta\sim p^{k(n-k)-(m+1-l)(k-l)}. \]

Next we show $\cU_\theta\cap \cU_{\theta'}=\emptyset$ for $\theta\neq\theta'$.
By contradiction, suppose $V\in \cU_\theta\cap \cU_{\theta'}$. By definition, $V$ satisfies
\begin{align*}
\#(\pi_V^*(\theta \oplus \Fp^m))\le p^l,\\
\#(\pi_V^*(\theta' \oplus \Fp^m))\le p^l,\\
\#(\pi_V^*(\Fp^m))=p^l,\\
\#(\pi_V^*(\Fp^2\times \Fp^m))=p^{l+1}. 
\end{align*}
The first and third conditions imply $\pi_V^*(\theta)\subset \pi_V^*(\Fp^m)$. Similarly, the second and third conditions imply $\pi_V^*(\theta')\subset \pi_V^*(\Fp^m)$. Noting that $\textup{span}\{\theta,\theta'\}=\Fp^2$, we have 
\[ \pi_V^*(\Fp^2\times \Fp^m)\subset \pi_V^*(\Fp^m), \]
which has cardinality $=p^l$, contradicting the fourth condition.

We have shown that $E_{l+\ga}(A;n,k)\supset \bigsqcup_{\theta\in \bE_{\ga,\beta}}\cU_\theta$, which has cardinality
$\#\bE_{\ga,\beta}\cdot\#\cU_\theta$
\[ \gtrsim p^{2\ga-\beta+k(n-k)-(m+1-l)(k-l)}. \]

\medskip

\fbox{Type 2, $\ga\in (\frac{\beta+1}{2},1]$} 
The trick is to write $a=(m-1)+(\beta+1)=:m'+\beta'$. Note that $l\le m'\le n+l-k-1$ and $\ga>\frac{\beta'}{2}$. We apply the construction in ``Type 3, $\ga\in (\frac{\beta}{2},\beta]$", where we only used $\ga>\frac{\beta}{2}$. We obtain a $A$ with $\#A\sim p^a$ such that
\[ \#E_s(A;n,k)\gtrsim p^{k(n-k)-(m'+1-l)(k-l)+2\ga-\beta'}= p^{k(n-k)-(m-l)(k-l)+2\ga-(\beta+1)}. \]

\medskip

\fbox{Type 3, $\ga\in (0,\frac\beta2]$} The trick is to make $a$ bigger. We write $a=m+\beta\le (m+1)+0=:m'+\beta'$. One can check $l+1\le m'\le n+l-k$ and $\ga>\beta'$. We apply the construction in \fbox{Type 2} with $(m',\beta')$. We find $A'$ with $\#A'\sim p^{m+1}$ such that
\[ \#E_s(A';n,k)\gtrsim p^{k(n-k)-(m+1-l)(k-l)}. \]
Choosing any subset $A\subset A'$ with $\#A\sim p^a$, we get
\[ \#E_s(A;n,k)\ge\#E_s(A';n,k)\gtrsim p^{k(n-k)-(m+1-l)(k-l)}. \]

\end{proof}

\

\section{Exceptional set estimate: Upper bound I}\label{sec7}

We prove the following proposition in this section.
\begin{proposition}\label{exprop1}
    Assume Conjecture \ref{conj2d} holds.
    For $0<a\le k+1$ and $0<s$, and $A\subset \Fp^{k+1}$ with $\#A\gtrsim p^a$, we have
\begin{equation}
    \#E_{s-\e}(A;k+1,k)\lesssim_\e p^{\e+\bM(a,s;k+1,k)},
\end{equation}
for any $\e>0$.
\end{proposition}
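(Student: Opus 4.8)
The plan is to deduce the bound from the Furstenberg estimate for \emph{lines} in $\Fp^{k+1}$, which is available under Conjecture \ref{conj2d} by Theorem \ref{lower bound} (applied with the role of $k$ played by $1$ and the ambient dimension by $k+1$). First I would reduce to the case $\#A\sim p^a$: replacing $A$ by a subset of this cardinality only enlarges $E_{s-\e}(A;k+1,k)$, since $\pi_V^*$ of a subset is a subset of $\pi_V^*(A)$. Next, if $(a,s;k+1,k)$ is of Type $1$ or Type $4$ in Definition \ref{defM}, the conclusion is immediate from the two easy lemmas at the end of Section \ref{sec2} ($E_{s-\e}(A;k+1,k)$ is either all of $G(1,\Fp^{k+1})$ or empty), so I would assume we are in Type $2$ or Type $3$; in particular $s\le\min\{a,k\}$, hence $a\ge s$.

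Write $p^u:=\#E_{s-\e}(A;k+1,k)$, which we may assume is $\ge 1$. For each $V$ in this set, the affine lines in direction $V$ partition $A$, and $\#\pi_V^*(A)<p^{s-\e}$ while $\sum_{L\in\pi_V^*(A)}\#(L\cap A)=\#A\sim p^a$. A dyadic pigeonholing in $\#(L\cap A)$ produces a scale $p^{w_V}$ and a family $\cL_V$ of lines in direction $V$ with $\#\cL_V\sim p^{r_V}$, each $L\in\cL_V$ meeting $A$ in $\sim p^{w_V}$ points, and with $r_V+w_V\gtrapprox a$; moreover $r_V<s-\e$ (as $\cL_V\subset\pi_V^*(A)$) and $0\le w_V\le 1$ (a line has $p$ points). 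One checks that in Type $2$/$3$ this forces $w_V>0$ for $p$ large. A further pigeonholing over $V$ selects dyadic $p^w,p^r$ and a subfamily $\cW$ of exceptional directions with $\#\cW\gtrapprox p^u$ and $w_V=w$, $r_V=r$ for all $V\in\cW$.

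Now set $\cL:=\bigcup_{V\in\cW}\cL_V$. Since a line determines its direction, this is a disjoint union and $\#\cL\approx p^{u+r}$, and $\widetilde A:=\bigcup_{L\in\cL}(L\cap A)\subset A$ is a $(w,t';k+1,1)$-set, with $Y(L):=L\cap A$ of size $\sim p^w$ and $p^{t'}=\#\cL$, so $t'=u+r-o(1)$. Theorem \ref{lower bound} then gives
\[ p^a\gtrsim\#A\ge\#\widetilde A\gtrsim_\e p^{\bF(w,t';k+1,1)-\e}, \]
and, absorbing the dyadic rounding via the Lipschitz continuity of $t\mapsto\bF(w,t;k+1,1)$ (Lemma \ref{lipint}), we obtain
\[ \bF(w,u+r;k+1,1)\le a+O(\e). \]

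It remains to combine this with $r+w\ge a-O(\e)$, $r\le s-\e$ and $w\in(0,1]$ and run a linear‑programming computation to extract $u\le\bM(a,s;k+1,k)+O(\e)$, which completes the proof. Concretely: since $\bF(w,\cdot;k+1,1)$ is non‑decreasing, the displayed inequality forces $u+r\le t^\ast(a,w)+O(\e)$, where $t^\ast(a,w)$ is the largest $t$ with $\bF(w,t;k+1,1)\le a$; using $\bF(w,0;k+1,1)=w$ one sees $w\le a$, so $r\ge a-w-O(\e)\ge0$ and hence $u\le t^\ast(a,w)-(a-w)+O(\e)$, and one maximizes this over the admissible range $w\in[\,a-s,\ \min\{a,1\}\,]$ (nonempty since $a\ge s$). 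Plugging in the explicit piecewise formula for $\bF(\,\cdot\,,\,\cdot\,;k+1,1)$ from Definition \ref{deffurindex} and splitting according to whether the integer part of $a$ equals $\lfloor s\rfloor+1$ (Type $2$) or $\lfloor s\rfloor$ (Type $3$), the maximum comes out to be exactly $\bM(a,s;k+1,k)$. I expect this final optimization to be the main obstacle: it is the ``brutal force'' computation anticipated in the introduction, and it is precisely what forces the awkward shape of $\bM$. A reassuring check is the case $k=1$, where $\bF(w,t;2,1)=\min\{w+t,\tfrac32w+\tfrac12t,w+1\}$ and the procedure reproduces Oberlin's bound $\max\{2s-a,0\}$ (cf.\ Proposition \ref{oberlin2d}).
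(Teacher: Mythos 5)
Your proposal is correct and proceeds along the same essential lines as the paper: both arguments reduce the projection bound to the $k=1$ Furstenberg estimate in $\Fp^{k+1}$ (Theorem \ref{lower bound}), and both hinge on the identical linear programme, which the paper isolates as Lemma \ref{refine} (your $w$ is the paper's $\si$, your $u+r$ is the paper's $t$, and your ``$u\le t^{\ast}(a,w)-(a-w)$'' is exactly the paper's ``$t\le\bM(a,s;k+1,k)+a-\si$''). The only genuine difference is organizational. You pigeonhole each exceptional direction $V$ to a single dominant dyadic richness scale $w_V$, uniformize $(w_V,r_V)$ across $V$, and apply the Furstenberg bound once to the resulting family $\cL=\bigsqcup_{V\in\cW}\cL_V$. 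The paper instead fixes the richness threshold $\mu=p^\si$ first, applies the Furstenberg bound (via Lemma \ref{refine}) to the set of \emph{all} $\mu$-rich lines, deduces that the set $E_\mu$ of directions with many $\mu$-rich lines is small, sums over the $O(\log p)$ dyadic $\mu$, and closes by a contradiction argument showing that any $V$ outside $\bigcup_\mu E_\mu$ must satisfy $\#\pi_V^*(A)\gtrsim p^{s-\e}$. Yours is slightly more direct; the paper's scale-by-scale version avoids the extra uniformization over $V$ and gives a cleaner contradiction, which is likely why the author preferred it. Two points worth recording explicitly in your write-up: the constraint $w\ge a-s+\Omega(\e)$, needed to land in the correct branch of the $\bF$ formula (this is the hypothesis $\si\ge a-s+\e_0$ in Lemma \ref{refine}), follows from $r\le s-\e$ and $r+w\gtrapprox a$ once $p$ is large; and when $\bM(a,s;k+1,k)\ge k$ the conclusion is trivial since $\#E\le\#G(1,\Fp^{k+1})\sim p^{k}$, so you may freely restrict attention to $\bM<k$, as the paper does.
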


It is proved by using the following lemma.

\begin{lemma}\label{refine}
Assume Conjecture \ref{conj2d} holds.
Fix $0<\e_0<1/100$. Let $0<a\le k+1, s>0$ and $\si\ge 0$. Suppose that the parameters
satisfy $\bM(a,s;k+1,k)+\e_0\le k$ and $a-s+\e_0\le \si\le a$. Let $A\subset \Fp^{k+1}$ with $\#A=p^{k+1}$. Define
\[ \cV:=\{ L\in A(1,\Fp^{k+1}):p^{\si}\le \#(L\cap A)  \}. \]
Then 
\[ \#\cV\lesssim_{\e_0} p^{\bM(a,s;k+1,k)+a-\si+\e_0}. \]
\end{lemma}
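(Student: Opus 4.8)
The plan is to recognise the set $\cV$ of $\si$-rich lines as a Furstenberg configuration inside $A$, feed it into the higher-dimensional Furstenberg lower bound (Theorem~\ref{lower bound}, available here conditionally on Conjecture~\ref{conj2d}), and then invert the resulting estimate by an explicit comparison with the formula for $\bM$ in Definition~\ref{defM}. First I would clear away the degenerate cases. Every affine line in $\Fp^{k+1}$ has exactly $p$ points, so if $\si>1$ then $\cV=\emptyset$ and we are done; hence assume $\si\le 1$. If $(a,s;k+1,k)$ were of Type~1 we would have $\bM(a,s;k+1,k)=k$, contradicting $\bM(a,s;k+1,k)+\e_0\le k$; and if it were of Type~4 then $s\le a-1$, forcing $\si\ge a-s+\e_0>1$ and again $\cV=\emptyset$. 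So $(a,s;k+1,k)$ is of Type~2 or~3; write the canonical expressions $a=m+\beta$, $s=l+\ga$. Small $p$ is absorbed into the implicit constant, so we may take $p$ large.

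Next I would set $p^{t}:=\#\cV$ (assume $\cV\neq\emptyset$). Taking $Y(L):=L\cap A$ for $L\in\cV$ exhibits $\bigcup_{L\in\cV}Y(L)\subset A$ as an $(\si,t;k+1,1)$-set, so Theorem~\ref{lower bound} gives
\[ p^{a}\ \ge\ \#\Big(\bigcup_{L\in\cV}Y(L)\Big)\ \gtrsim_{\e}\ p^{\bF(\si,t;k+1,1)-\e} \]
for every $\e>0$; hence, for $p$ large, $\bF(\si,t;k+1,1)\le a+\e$ for any prescribed $\e>0$. By Definition~\ref{deffurindex}, writing $t=2m'+\tau'$ with $m'\in\{0,\dots,k-1\}$ and $\tau'\in(0,2]$, one has $\bF(\si,t;k+1,1)=\si+m'+\min\{\tau',\tfrac12(\si+\tau'),1\}$, so $t\mapsto\bF(\si,t;k+1,1)$ is continuous, increasing, and piecewise linear with slopes $1,\tfrac12,0$ on the three pieces of each interval $[2m',2m'+2]$. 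Consequently $\bF(\si,t;k+1,1)\le a$ forces $t\le t_{\max}$, the largest $t$ with $\bF(\si,t;k+1,1)\le a$; a direct inversion — writing $a-\si=M+B$ with $M\in\N$, $B\in(0,1]$, and treating $a-\si$ integer separately — yields $t_{\max}-(a-\si)\le M+\max\{B-\si,0\}$.

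The remaining task is the elementary inequality
\[ \bM(a,s;k+1,k)+\e_0\ \ge\ M+\max\{B-\si,0\}, \]
because then $t\le t_{\max}+O(\e)\le(a-\si)+\bM(a,s;k+1,k)+\e_0+O(\e)$, and choosing $\e\ll\e_0$ gives $\#\cV=p^{t}\lesssim_{\e_0}p^{\bM(a,s;k+1,k)+a-\si+\e_0}$ as claimed. To prove it I would specialise Definition~\ref{defM} to $n=k+1$: in Type~2 one has $m=l+1$ and $\bM(a,s;k+1,k)=l+\max\{2\ga-\beta-1,0\}$, and in Type~3 one has $m=l$ and $\bM(a,s;k+1,k)=l+\max\{2\ga-\beta,0\}$; in particular $\bM(a,s;k+1,k)\ge l$. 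The hypothesis $\si\ge a-s+\e_0$ reads $a-\si\le l+\ga-\e_0$, which forces $M\le l$; this settles the case $B\le\si$ (there $\max\{B-\si,0\}=0$ and $\bM\ge l\ge M$) and the integer case. When $B>\si$ one checks $M+\max\{B-\si,0\}=m+\beta-2\si$ (with $M=m$ if $\si<\beta$ and $M=m-1$ if $\si\ge\beta$), and $a-\si\le l+\ga-\e_0$ then gives $\ga\ge\beta-\si+\e_0$ in Type~3, resp.\ $\ga\ge 1+\beta-\si+\e_0$ in Type~2, hence $2\ga-\beta\ge\beta-2\si+2\e_0$, resp.\ $2\ga-\beta-1\ge 1+\beta-2\si+2\e_0$; substituting into the corresponding $\bM$-formula closes the estimate with $\e_0$ to spare. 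The only leftover is the subcase $l=0$ in Type~3, where $a-\si=0$, so $A$ lies on a single line and $\#\cV\le1$, which is immediate.

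The main obstacle is precisely the case analysis of the preceding paragraph: tracking on which of the slope-$1$, slope-$\tfrac12$, or flat pieces of $t\mapsto\bF(\si,t;k+1,1)$ the value $a$ lies, and where $\si$ sits relative to $\beta$ and $\ga$, while keeping the $\e_0$-losses under control. This is the brute-force analogue of the verifications in Lemmas~\ref{recursionF1} and~\ref{recursionF2}; the Furstenberg input (Theorem~\ref{lower bound}) is used only as a black box.
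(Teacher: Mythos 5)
Your overall route is the same as the paper's: exhibit $\bigcup_{L\in\cV}(L\cap A)$ as a $(\si,t;k+1,1)$-set inside $A$, apply Theorem~\ref{lower bound} (via Conjecture~\ref{conj2d}) to get $\bF(\si,t;k+1,1)\le a+\e$, and then compare with the formula for $\bM(a,s;k+1,k)$ by elementary case analysis using $\si\ge a-s+\e_0$. Your reduction to Types 2 and 3, the identity $\bF(\si,t;k+1,1)=\si+m'+\min\{\tau',\tfrac12(\si+\tau'),1\}$, and the verification of $\bM(a,s;k+1,k)+\e_0\ge M+\max\{B-\si,0\}$ in the cases $B\le\si$ and $\si<B<1$ are all correct.

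The gap is in the inversion step, precisely on the flat pieces of $t\mapsto\bF(\si,t;k+1,1)$ (where the minimum equals $\si+1$). Because $\bF$ has slope $0$ there, its ``inverse'' $a\mapsto t_{\max}(a)$ jumps by $\si$ each time $a-\si$ crosses an integer: if $a-\si=N-\delta$ with $\delta$ tiny then $t_{\max}(a)=2N-\si-2\delta$, while $t_{\max}(a+\e)=2N+O(\e)$ once $\delta<\e$. So your assertion $t\le t_{\max}+O(\e)$ does not follow from $\bF(\si,t)\le a+\e$ near these points, and your closed-form bound $t_{\max}-(a-\si)\le M+\max\{B-\si,0\}$ itself fails when $B=1$ (there $t_{\max}-(a-\si)=M+1$, not $M+1-\si$). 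You flag the integer case, but the justification offered, ``$a-\si\le l+\ga-\e_0$ forces $M\le l$,'' only yields $\bM\ge M$, whereas in that case you need $\bM\ge M+1$. The needed statement is true and follows from the same hypothesis with one more line: if $a-\si$ lies in $[N-\e,N]$ for a positive integer $N$, then $N\le l+\ga-\e_0+\e<l+1$, so $N\le l\le\bM$, and then $t\le 2N+O(\e)\le (a-\si)+\bM+\e_0$ directly; alternatively, run your inversion with $a+\e$ in place of $a$ throughout, keeping $\e\ll\e_0$. This is exactly the regime the paper isolates as its case $a+\eta_0\ge w+\si+1$, which it resolves by an integrality/contradiction argument rather than by inverting $\bF$; as written, your proof is incomplete at this one point, though the repair is short and uses nothing beyond your own hypotheses.
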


\begin{remark}
    {\rm
    The implicit constant in $\lesssim_{\e_0}$ can be made independent of $\si$, since we can apply the lemma to $\si\in \e_0^2\cdot\N$ which consists of $O(\e_0^{-2})$ values.
    }
\end{remark}

\begin{proof}
    The proof of this part is by a Furstenberg estimate. As in Definition \ref{defM}, write
    \begin{equation}
        \begin{cases}
            a=m+\beta\\
            s=l+\gamma.
        \end{cases}
    \end{equation}
There are four cases.

Type 1: When $s>\min\{a,k\}$,
\[ \bM(a,s;k+1,k)=k. \]

Type 2: When $s\le\min\{a,k\}$ and $m=l+1, \ga\in(\beta,1]$,
\begin{align*}
    \bM(a,s;k+1,k)=l+\max\{2\ga-(\beta+1),0\}.
\end{align*}

Type 3: When $s\le\min\{a,k\}$ and $m=l, \ga\in (0,\beta]$, 
\begin{align*}
\bM(a,s;k+1,k)=l+\max\{2\ga-\beta,0\}.
\end{align*}

Type 4: When $s\le a-1$,
\[ \bM(a,s;k+1,k)=-\infty. \]

Type 1 is trivial, since $\#\cV\le p^k$ is always true. Type 4 is also easy, since in this case we have $\si\ge a-s+\e_0\ge 1+\e_0$, and hence $\cV=\emptyset$. We only need to consider Type 2 and Type 3.

Suppose $\#\cV=p^t$, we want to show 
\[t\le \bM(a,s;k+1,k)+a-\si+\e_0.\]
We write $t=2w+\tau$, where $w\in \{0,\dots,k-1\}$, $\tau\in (0,2]$.

For convenience, we define 
\begin{equation}
R(\beta,\ga):=\begin{cases}
    \max\{2\ga-(\beta+1),0\} & m=l+1, \ga\in(\beta,1]\\
    \max\{2\ga-\beta,0\} & m=l, \ga\in (0,\beta].
\end{cases}
\end{equation}
Then $\bM(a,s;k+1,k)=l+R(\beta,\ga)$ in the case of Type 2 or Type 3.

\medskip 

Note that $A$ contains a $(\si,t;k+1,1)$-Furstenberg set $\bigcup_{L\in\cV}(L\cap A)$.
Since Conjecture \ref{conj2d} holds, by Theorem \ref{lower bound}, we have the lower bound:

\begin{equation}\label{fromthis}
    a+\eta_0\ge \bF(\si,t;k+1,1)=w+\min\{\si+\tau,\frac32\si+\frac12\tau,\si+1\}, 
\end{equation} 
for any $\eta_0>0$ when $p$ is big enough depending on $\eta_0$. We choose $\eta_0$ such that $\eta_0<\e_0/2$

From \eqref{fromthis}, there are three possible cases.

\medskip
(1) $a+\eta_0\ge w+\si+\tau$.

This implies $t=2w+\tau\le a+w-\si+\eta_0$. Our goal is to show $t\le \bM(a,s;k+1,k)+a-\si+\e_0$, so we just need to show 
\[ \bM(a,s;k+1,k)\ge w. \]
We also know 
\[ a\ge w+\si+\tau-\eta_0\ge w+\tau+a-s+\e_0-\eta_0\ge w+\tau+a-s+\e_0/2, \]
which implies 
\[ s\ge w+\tau+\e_0/2. \]
Since $s\le l+1$ and $w\in \N$, we have $w\le l$. Therefore, with the condition $\bM(a,s;k+1,k)\ge l$, we see that
\[ \bM(a,s;k+1,k)\ge  w. \]

(2) $a+\eta_0\ge \si+1+w$.

In this case, $a\ge \si+1+w-\eta_0\ge a-s+w+1+\e_0-\eta_0$, which implies
\[ s\ge w+1+\e_0/2. \]
Since $s\le l+1$ and $w\in\N$, we get $w\le l-1$.

Now we want to prove $t\le \bM(a,s;k+1,k)+a-\si+\e_0$. If this is not true, then by contradiction, we assume
\[ 2w+\tau=t> \bM(a,s;k+1,k)+a-\si+\e_0. \]
This will imply
\[ l-1+\tau\ge w+\tau>\bM(a,s;k+1,k)+a-\si-w+\e_0\ge \bM(a,s;k+1,k)+1+\e_0-\eta_0\ge l+1, \]
which is a contradiction, as $\tau\le 2$.

(3) $a+\eta_0\ge w+\frac32\si+\frac12\tau$.

This is equivalent to $t\le 2a-3\si+2\eta_0$. 
We hope to prove
\[t\le  \bM(a,s;k+1,k)+a-\si+\e_0.\]
We only need to prove
\[ 2a-3\si\le \bM(a,s;k+1,k)+a-\si, \]
since $2\eta_0<\e_0$.

This is equivalent to
\[ a\le \bM(a,s;k+1,k)+2\si. \]
We plug in $a=m+\beta,s=l+\ga, \bM(a,s;k+1,k)=l+R(\beta,\ga), \si\ge (m+\beta)-(l+\ga)+\e_0$. Then it suffices to prove
\[ m+\beta\le l+R(\beta,\ga)+2(m+\beta)-2(l+\ga)+2\e_0. \]
This inequality is equivalent to
\[ 2\ga-(\beta+m-l)\le R(\beta,\ga)+2\e_0, \]
This is easy to verify by checking either in the case $m=l+1,\ga\in(\beta,1]$ or in the case $m=l,\ga\in (0,\beta]$.
\end{proof}

\begin{proof}[Proof of Proposition \ref{exprop1}]

We will use the lemma that we just proved. We show that if $A\subset \Fp^{k+1}$ with $\#A\gtrsim p^a$, $E\subset G(1,\Fp^{k+1})$ with $\#E>C_\e p^{\e+\bM(a,s;k+1,k)}$, and $p$ is big enough, then there exists $V\in E$ such that
\[ \#\pi_V^*(A)>p^{s-\e}. \]

Since $\#E\le p^k$, we have
\[ \e+\bM(a,s;k+1,k)\le k. \]

For each $V\in E$ and a dyadic number $1\le \mu\le p$, define 
\[ \L_{V,\mu}:=\{ L\in A(1,\Fp^{k+1}): L\parallel V, \mu\le \#(A\cap L)< 2\mu  \}. \]
We see that $\{\L_{V,\mu}\}_\mu$ form a partition of the lines parallel to $V$.

We choose $\e_0\ll \e$, for example, $\e_0=\e^2$.
We will estimate the cardinality of the set $\bigcup_{V\in E} \L_{V,\mu}$ for $\mu\ge p^{a-s+\e_0}$. We write $\mu=p^\si$ with $a\ge\si\ge a-s+\e_0$. By Lemma \ref{refine}, we have 
\[ \#\bigcup_{V\in E} \L_{V,\mu}\lesssim_\e p^{\bM(a,s;k+1,k)+a-\si+\e_0}. \]

If we define
\[ E_\mu:=\{ V\in E: \#\L_{V,\mu}>p^{a-\si-\e_0} \}, \]
then
\[ \#E_\mu\lesssim_\e p^{\bM(a,s;k+1,k)+2\e_0}. \]
Summing over dyadic $\mu\ge p^{a-s+\e_0}$, we have
\[ \#\bigcup_{\mu\ge p^{a-s+\e_0}}E_\mu\lesssim_\e p^{\bM(a,s;k+1,k)+3\e_0}. \]
Therefore, $\#E>\#\bigcup_{\mu\ge p^{a-s+\e_0}}E_\mu$ when $p$ is large enough. We can find $V\in E\setminus \bigcup_{\mu\ge p^{a-s+\e_0}}E_\mu$. In other words, there exists $V\in E$, such that for any $\mu=p^\si\ge p^{a-s+\e_0}$,
\[ \#\L_{V,\mu}\le p^{a-\si-\e_0}. \]

Now, we do the estimate. We have
\[ \#A=\sum_{\mu<p^{a-s+\e_0}}\sum_{L\in \L_{V,\mu}}\#(A\cap L)+\sum_{\mu\ge p^{a-s+\e_0}}\sum_{L\in \L_{V,\mu}}\#(A\cap L). \]
Note that
\[ \sum_{\mu\ge p^{a-s+\e_0}}\sum_{L\in \L_{V,\mu}}\#(A\cap L)\le \sum_{\mu\ge p^{a-s+\e_0}}2\mu \cdot \#\L_{V,\mu}\le 2\sum_{\mu\ge p^{a-s+\e_0}}p^{a-\e_0}\le \frac12 \#A.  \]
In the last inequality, we are summing over $O(\log p)$ terms and we assume $p$ is big enough.

We have 
\[\frac12\#A\le \sum_{\mu<p^{a-s+\e_0}}\sum_{L\in \L_{V,\mu}}\#(A\cap L)<2p^{a-s+\e_0} \#\pi_V^*(A).\]
This implies $\#\pi_V^*(A)\gtrsim p^{s-\e_0}>p^{s-\e}$.

\end{proof}

\

\section{Exceptional set estimate: Upper bound II}\label{sec8}

The goal of this section is to prove the following proposition.
\begin{proposition}\label{expropupper2}
Let $n,k\in\N$ and $n\ge k+2$. Suppose for any $n'=k+1,\dots,n-1$ and $a\in (0,n'], s>0$, we have
    \begin{equation}\label{excondition}
        \sup_{A\subset \Fp^{n'}, \#A\sim p^a}\#E_{s-\e}(A;n',k)\lesssim_\e p^{\e+\bM(a,s;n',k)},
    \end{equation}
for any $\e>0$.
Then for $a\in (0,n], s>0$, we have
\[\sup_{A\subset \Fp^{n}, \#A\sim p^a}\#E_{s-\e}(A;n,k)\lesssim_\e p^{\e+\bM(a,s;n,k)},\]
for any $\e>0$.

\end{proposition}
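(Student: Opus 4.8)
The plan is to run an induction on $n$, with base case $n=k+1$ being Proposition \ref{exprop1}; the inductive step reduces the estimate in $\Fp^n$ to the estimates \eqref{excondition} in $\Fp^{n-1}$ and in $\Fp^{k+1}$, dually to how the Furstenberg lower bound in Section \ref{sec5} reduces the $\Fp^n$ case to the $\Fp^{n-1}$ and $\Fp^{k+1}$ cases (Lemma \ref{lempig2} combined with Lemma \ref{recursionF2}). Accordingly I expect Section \ref{sec8} to consist of a pigeonholing lemma and a recursion lemma for $\bM$, combined exactly as in Sections \ref{sec4}, \ref{sec5}, \ref{sec7}.

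\emph{Geometric reduction.} Fix a splitting $\Fp^n=\Fp^{n-1}\times\Fp$. By averaging over the choice of hyperplane, for a fixed $V$ only an $O(p^{-(n-k)})$ fraction of the hyperplanes contain $V$, so one may assume that all but a negligible fraction of the $V\in E_{s-\e}(A;n,k)$ are transverse to $\Fp^{n-1}$, and it suffices to count those. For such a $V$ set $V':=V\cap\Fp^{n-1}\in G(n-k-1,\Fp^{n-1})$. The identifications $\Fp^n/V'\cong\Fp^{k+1}$ and $\Fp^n/V\cong(\Fp^n/V')/(V/V')$ give
\[
\#\pi_V^*(A)=\#\pi_\ell^*(\bar A_{V'}),\qquad \ell:=V/V'\in G(1,\Fp^{k+1}),
\]
where $\bar A_{V'}\subset\Fp^{k+1}$ is the image of $A$ under $\Fp^n\to\Fp^n/V'$; note $\#\bar A_{V'}=\#\pi_{V'}^*(A)=\sum_{\tau\in\Fp}\#\pi_{V'}^*(A_\tau)$, the $A_\tau$ being the $\tau$-slices of $A$. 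Since each $V$ transverse to $\Fp^{n-1}$ corresponds to a unique pair $(V',\ell)$, summing over $V'$ yields
\[
\#E_{s-\e}(A;n,k)\ \lesssim\ \sum_{V'\in G(n-k-1,\Fp^{n-1})}\#E_{s-\e}(\bar A_{V'};k+1,k).
\]

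\emph{Pigeonholing and induction.} Decompose the $V'$ dyadically by $\#\bar A_{V'}\sim p^{a_1}$, with $\cG_{a_1}$ the corresponding family. For $V'\in\cG_{a_1}$, Proposition \ref{exprop1} (the $n'=k+1$ case) gives $\#E_{s-\e}(\bar A_{V'};k+1,k)\lesssim_\e p^{O(\e)+\bM(a_1,s;k+1,k)}$. To bound $\#\cG_{a_1}$, one uses that $\#\bar A_{V'}\lesssim p^{a_1}$ forces $\pi_{V'}^*$ of a suitable $A$-derived subset of $\Fp^{n-1}$ to have size $\lesssim p^{a_1}$, so that $\cG_{a_1}$ is contained in an exceptional set for that subset in $\Fp^{n-1}$, controlled by \eqref{excondition} for $n'=n-1$ (equivalently, by a Furstenberg estimate from Theorem \ref{lower bound}, using that $\#A\sim p^a$ is known). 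Combining and summing over the $O(\log p)$ values of $a_1$,
\[
\#E_{s-\e}(A;n,k)\ \lesssim_\e\ p^{O(\e)}\max_{a_1}\Big(\bM(a_2,a_1;n-1,k)+\bM(a_1,s;k+1,k)\Big)
\]
for the appropriate $A$-derived dimension $a_2$; the monotonicity of $\bM$ (Lemma \ref{lipina}) lets one replace $a_2$ by its extreme value. It then remains to prove the arithmetic inequality
\[
\max_{a_1}\Big(\bM(a_2,a_1;n-1,k)+\bM(a_1,s;k+1,k)\Big)\ \le\ \bM(a,s;n,k)+O(\e)
\]
over the admissible range of $a_1$ — the exact analogue of Lemmas \ref{recursionF1}--\ref{recursionF2}, proved by an exhaustive case check over the Types of Definition \ref{defM}.

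\emph{Main obstacle.} The geometric reduction is soft; the real difficulty lies in the second step and is twofold. First, one must identify precisely \emph{which} refined invariant of $A$ controls $\#\cG_{a_1}$: the naive choice $\pi_{\Fp^{n-1}}(A)$ loses a full unit of dimension and makes the recursion inequality false, so instead one must exploit the slice identity $\#\bar A_{V'}=\sum_\tau\#\pi_{V'}^*(A_\tau)$ (pigeonholing the slices by size), or a refined Furstenberg count of the type of Lemma \ref{refine} applied to the $\Fp^{k+1}$-fibres, in order to keep the bookkeeping tight and the right $a_2$ close to $a$. Second, once the correct recursion is isolated, verifying it for all configurations of $(a,s,n,k)$ and all $a_1$ is a long, purely computational case analysis, which I expect to form the bulk of Section \ref{sec8}, just as Lemma \ref{recursionF1} dominates Section \ref{sec4}.
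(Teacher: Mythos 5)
Your overall architecture (induct on $n$ with base case Proposition \ref{exprop1}, reduce each transverse $V$ to the pair $(V',\ell)$ with $V'=V\cap\Fp^{n-1}$ and $\#\pi_V^*(A)=\#\pi^*_\ell(\bar A_{V'})$, then prove a pigeonholing lemma plus an arithmetic recursion for $\bM$) is the paper's architecture, and the geometric reduction is fine. The gap is in the bookkeeping: you retain only one parameter per $V'$, namely $a_1$ with $\#\bar A_{V'}=\#\pi^*_{V'}(A)\sim p^{a_1}$, and you propose to bound $\#\cG_{a_1}$ by an exceptional set in $\Fp^{n-1}$ at threshold $a_1$ for some $A$-derived set of dimension $a_2$ close to $a$, leaving the recursion $\bM(a_2,a_1;n-1,k)+\bM(a_1,s;k+1,k)\le\bM(a,s;n,k)$ to verify. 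That recursion is false, and not because $a_2$ is too small: take $n=3$, $k=1$, $a=2$, $s=1$, $a_2=1$ (the slice size, which is the best the slice identity gives when the slices of $A$ have size $p$), $a_1=1$; then $\bM(1,1;2,1)+\bM(1,1;2,1)=1+1=2$ while $\bM(2,1;3,1)=1$. So the two worst-case bounds you multiply cannot be combined into the target bound, and the plan as stated cannot be completed; your ``main obstacle'' paragraph senses the problem but the proposed fix (making $a_2$ close to $a$) does not repair it.

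What the paper does differently is to introduce a second parameter that your scheme discards. It pigeonholes the slices $A_\xi=A\cap(\Fp^{n-1}\times\{\xi\})$ by size $p^{a_1}$ (so there are $\gtrapprox p^{a-a_1}$ of them) and, crucially, also pigeonholes the \emph{per-slice} projection size $p^{s_1}\sim\#\pi^*_W(A_\xi)$ over the relevant $W=V\cap(\Fp^{n-1}\times\bzz)$, with $s_1\le s$. The number of admissible $W$ is then bounded by $\#E_{s_1^+}(A_\xi;n-1,k)\lesssim p^{\bM(a_1,s_1;n-1,k)+\e}$ --- note the threshold is the small slice-level quantity $s_1$, not the total projection size $s_1+a-a_1$ --- while for a single well-chosen $W$ one has $\#\pi^*_W(A)\gtrsim\sum_\xi\#\pi^*_W(A_\xi)\gtrapprox p^{s_1+a-a_1}$, so $\#\cV_W\lesssim p^{\bM(s_1+a-a_1,s;k+1,k)+\e}$ by the $n'=k+1$ case. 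The recursion that must (and does) hold is therefore $\bM(a_1,s_1;n-1,k)+\bM(s_1+a-a_1,s;k+1,k)\le\bM(a,s;n,k)$ for $\max\{0,a-1\}\le a_1\le\min\{n-1,a\}$ and $0\le s_1\le s$ (Lemma \ref{RecursionM}); in the numerical example above it is saved precisely because small total projection together with $\sim p^{a-a_1}$ slices forces $s_1$ to be small, making the first factor $\bM(a_1,s_1;n-1,k)$ collapse. So the missing idea is not more case analysis but the asymmetric pair of parameters $(a_1,s_1)$: threshold $=$ per-slice projection size in the $\Fp^{n-1}$ factor, set size $=$ accumulated projection $s_1+a-a_1$ in the $\Fp^{k+1}$ factor.
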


We prove the following lemma.

\begin{lemma}\label{leminducM}
    Assume \eqref{excondition} holds. Suppose $n,k\in\N$, $n\ge k+2$, $a\in (0,n]$, $s>0$ and $s\in (a-(n-k),\min\{a,k\}]$. Then for any $A\subset \Fp^n$ with $\#A\sim p^a$, there exist numbers $a_1,s_1$, such that
    \begin{equation}
        \begin{cases}
            p^{\max\{0,a-1\}}\le p^{a_1}\le p^{\min\{n-1,a\}};\\
            1\le p^{s_1}\le p^s;
        \end{cases}
    \end{equation}
    and
    \begin{equation}
        \#E_{s-\e}(A;n,k)\lesssim_\e p^{\e+ \bM(a_1,s_1;n-1,k)+\bM(s_1+a-a_1,s;k+1,k)}.
    \end{equation}
\end{lemma}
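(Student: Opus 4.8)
The plan is to realise this inductive step by peeling off one ambient coordinate, in close parallel with Lemma~\ref{lempig2}. Write $\Fp^n=\Fp^{n-1}\times\Fp$, let $\pi\colon\Fp^n\to\Fp^{n-1}$ be the coordinate projection, and put $A_1:=\pi(A)$, $p^{a_1}:=\#A_1$. Since the fibres of $\pi$ lie inside $\Fp$ one gets for free that $\max\{0,a-1\}\le a_1\le\min\{n-1,a\}$, so $a_1$ will be the first parameter. By a routine pigeonholing on fibre sizes we may pass to a subset of $A$, still of cardinality $\gtrapprox p^{a}$, on which every nonempty fibre of $\pi$ over $A_1$ has size $\sim p^{f}$ with $a_1+f=a+O(1/\log p)$. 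We also discard the $O(p^{k(n-k-1)})$ planes $V\in E_{s-\e}(A;n,k)$ containing the last coordinate direction $e_n$: for these $\pi^*_V$ factors through $\pi$, so they are in bijection with $E_{s-\e}(A_1;n-1,k)$ and are controlled by \eqref{excondition} with $n'=n-1$, well inside the claimed bound. Every remaining $V$ meets the hyperplane $\Fp^{n-1}$ transversally in an $(n-k-1)$-plane $W:=V\cap\Fp^{n-1}$, and the planes with a prescribed $W$ form a family $\cV_W$ parametrised by $\bar u\in\Fp^{n-1}/W\cong\Fp^{k}$ via $V_{W,\bar u}=W\oplus\langle u+e_n\rangle$ (for any representative $u$ of $\bar u$).

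The engine of the step is the identity $\#\pi^*_{V_{W,\bar u}}(A)=\#\pi^*_{\ell_{\bar u}}(\widetilde A_W)$, where $\widetilde A_W:=\pi^*_W(A)$ is the set of $W$-translates meeting $A$, viewed as a subset of $\Fp^n/W\cong\Fp^{k+1}$, and $\ell_{\bar u}\in G(1,\Fp^{k+1})$ is the line through $(\bar u,1)$; this is a direct computation with the quotient maps. Hence $\cV_W$ injects into $E_{s-\e}(\widetilde A_W;k+1,k)$, so \eqref{excondition} with $n'=k+1$ (legitimate since $k+1\le n-1$) gives $\#\cV_W\lesssim_\e p^{\e+\bM(b(W),s;k+1,k)}$ with $p^{b(W)}:=\#\widetilde A_W$. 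On the other side, the shadow of $\widetilde A_W$ in $\Fp^{n-1}/W$ is precisely $\pi^*_W(A_1)$, and \eqref{excondition} with $n'=n-1$ bounds, for each dyadic $p^{s_1}$, the number of $W$ with $\#\pi^*_W(A_1)\approx p^{s_1}$ by $\lesssim_\e p^{\e+\bM(a_1,s_1;n-1,k)}$. Summing $\#E_{s-\e}(A;n,k)=\sum_W\#\cV_W$ and pigeonholing over the $O(\log^2 p)$ pairs of dyadic scales $(s_1,b)$ yields
\[
\#E_{s-\e}(A;n,k)\ \lesssim_\e\ p^{O(\e)\,+\,\bM(a_1,s_1;n-1,k)\,+\,\bM(b,s;k+1,k)}
\]
for some dyadic $s_1$ and $b$ realised by genuine $W$'s (and $0\le s_1\le s$ is read off from the fact that the corresponding $\cV_W$ is nonempty).

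It remains to replace $\bM(b,s;k+1,k)$ by $\bM(s_1+a-a_1,s;k+1,k)$, which rests on two facts. First, each of the $p^{s_1}$ nonempty columns of $\widetilde A_W$ over $\pi^*_W(A_1)$ contains an entire $\pi$-fibre and so has size $\gtrsim p^{f}$; therefore $b(W)\gtrsim s_1(W)+f=s_1(W)+a-a_1$ up to $O(1/\log p)$. Second, inspecting Definition~\ref{defM} one sees that, for fixed $s$, the function $a\mapsto\bM(a,s;k+1,k)$ is non-increasing: it equals $k$ for $a<s$ and then decreases with slopes $-1$ down to $-\infty$. These two facts give $\bM(b(W),s;k+1,k)\le\bM(s_1(W)+a-a_1,s;k+1,k)+O(\e)$ for every relevant $W$, hence the claim. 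The one delicate spot is the jump of $a\mapsto\bM(a,s;k+1,k)$ at $a=s$ (from $k$ to $s$): it is neutralised by applying \eqref{excondition} with the threshold $s$ replaced by $s-\e/4$, which moves the relevant first argument off the jump — exactly the kind of parameter perturbation used in the Remark after Lemma~\ref{refine}. The surviving bookkeeping (passing from $\gtrapprox$ to strict inequalities for $p$ large, and from $a$ to $a-O(\e)$ via the monotonicity/Lipschitz properties of $\bM$ as in Lemma~\ref{lipina}) is routine, in the spirit of Proposition~\ref{proplower2}.

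The main obstacle, as the previous paragraph indicates, is the quantitative reconciliation of $\#\widetilde A_W$ with $s_1+a-a_1$: it needs the fibre-uniformisation of $A$ to be essentially lossless together with the monotonicity of $\bM(\cdot,s;k+1,k)$, and the boundary behaviour near $a=s$ (and the degenerate case $a-a_1=1$, where every fibre of $\pi$ is full) must be folded carefully into the error terms rather than swept aside.
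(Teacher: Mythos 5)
Your overall architecture is close to the paper's: split each exceptional plane $V$ as $W\oplus\langle u+e_n\rangle$ with $W=V\cap(\Fp^{n-1}\times\bzz)$, bound $\#\cV_W$ through the quotient identity $\#\pi_V^*(A)=\#\pi_{\ell_V}^*(\pi_W^*(A))$ and \eqref{excondition} with $n'=k+1$, bound the number of relevant $W$ by \eqref{excondition} with $n'=n-1$, and feed in the inequality $b(W)\ge s_1(W)+(a-a_1)$. But there is one load-bearing step that is wrong: you take $A_1:=\pi(A)$ (the projection along $e_n$), define $s_1$ by $\#\pi_W^*(A_1)\approx p^{s_1}$, and assert that $s_1\le s$ ``is read off from the fact that the corresponding $\cV_W$ is nonempty.'' This is false. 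The hypothesis $V\in E_{s-\e}(A;n,k)$ controls the sheared projection $x\mapsto \bar x-x_nu+W$, not the straight one $x\mapsto\bar x+W$; the two can differ by a full factor of $p$. Concretely, if $A$ is a union of $p^{a-1}$ lines parallel to the direction $u+e_n$, then $\#\pi_V^*(A)$ can be as small as $p^{a-1}$ (or even $1$ for a suitable $V\supset W$), while $A_1=\pi(A)$ is a union of lines in direction $u$ and $\#\pi_W^*(A_1)=p$, so $s_1=1$ regardless of how small $s$ is. In general one only gets $s_1\le s+1$. Since the conclusion of Lemma \ref{leminducM} requires $p^{s_1}\le p^s$, and the companion Lemma \ref{RecursionM} (the only way the lemma gets used) also requires $s_1\le s$, your argument produces admissible data only for the levels $s_1\le s$ and leaves the range $s_1\in(s,s+1)$ uncontrolled; nothing in the paper (or in your sketch) shows that $\bM(a_1,s_1;n-1,k)+\bM(s_1+a-a_1,s;k+1,k)\le\bM(a,s;n,k)$ in that range, so the gap is genuine rather than bookkeeping.

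The paper's proof avoids exactly this trap by slicing instead of projecting: it sets $A_\xi:=A\cap(\Fp^{n-1}\times\{\xi\})$, lets $p^{a_1}$ be the typical slice size (so $a-a_1$ counts slices, not fibre size), and defines $s_1$ by $\#\pi_W^*(A_\xi)\approx p^{s_1}$. For two points of the same horizontal slice the difference lies in $\Fp^{n-1}\times\bzz$, hence lies in $V$ iff it lies in $W=V\cap(\Fp^{n-1}\times\bzz)$; therefore $\#\pi_W^*(A_\xi)\le\#\pi_V^*(A)<p^{s-\e}$ and $s_1\le s-\e$ comes for free. The $(n-1)$-dimensional hypothesis is then applied to the slices $A_\xi$ (bounding the number of directions $W$), and the $(k+1)$-dimensional hypothesis to $\pi_{\Fp^{k+1}}(A)$ for one pigeonholed $W$, whose size is $\gtrapprox p^{(a-a_1)+s_1}$ because distinct slices give distinct $W$-cosets. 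If you want to keep your projection-based setup, you must either prove the recursion inequality also for $s_1\in(s,s+1)$ or show those levels contribute negligibly; as written, neither is done. (Your other flagged points --- the jump of $\bM(\cdot,s;k+1,k)$ at $a=s$ and the degenerate case $a_1=a-1$ --- are handled in the paper by the $\e$-shifts via Lemma \ref{lipina}, and would be fixable here too; the $s_1\le s$ issue is the real obstruction.)
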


\begin{proof}
    We will do several steps of pigeonholing, which lead to the parameters $s_1,a_1$. For convenience, we denote $\cV:=E_{s-\e}(A;n,k)$.

By properly choosing the coordinate, we can assume $\gtrsim 1$ fraction of the $(n-k)$-planes in $\cV$ intersect $\Fp^{n-1}\times \bzz$ transversely (at a $(n-k-1)$-plane). Since we are going to find upper bound for $\#\cV$, we can just assume all the planes in $\cV$ intersect $\Fp^{n-1}\times \bzz$ at a $(n-k-1)$-plane.

For each $W\in G(n-k-1,\Fp^{n-1}\times \bzz)$, we define $\cV_W:=\{V\in\cV: V\cap(\Fp^{n-1}\times\bzz)=W\}$. By pigeonholing, there exits $\cW\subset G(n-k-1,\Fp^{n-1}\times \bzz)$ with $\#\cW\sim p^{t_1}$ such that for each $W\in\cW$, $\#\cV_W\sim p^{t_2}$. Here, $p^{t_1},p^{t_2}\ge 1$ are dyadic numbers so that
\[ p^{t_1+t_2}\gtrapprox \#\cV. \]

By pigeonholing on $\#A\cap(\Fp^{n-1}\times \xi)$ for $\xi\in\Fp$, we can find a dyadic number $1\le p^{a_1}\le p^a$ and $\Xi_1\subset \Fp$ so that $\#A\cap(\Fp^{n-1}\times \xi)\sim p^{a_1}$ for any $\xi\in\Xi_1$ and
\[ p^{a_1}\cdot\#\Xi_1\gtrapprox p^a. \]
Since $\#\Xi_1\le p$, we also have $p^{a_1}\gtrapprox p^{a-1}$.

For each $\xi\in\Xi_1$, we consider the slice $\Fp^{n-1}\times \xi$. We denote $A_\xi:=A\cap (\Fp^{n-1}\times \xi)$. There exists a dyadic number $1\le p^{s_1(\xi)}\le p^{s-\e}$, such that
\[ \#\pi_W^*(A_\xi)\sim p^{s_1(\xi)}, \]
for $\gtrapprox \#\cW\sim p^{t_1}$ many $W\in \cW$. Here, $s_1(\xi)\le s-\e$ is because $\#\pi_W^*(A_\xi)\le \#\pi_V^*(A)<p^{s-\e}$ for $V\in\cV_W$.  

We see that $E_{s_1(\xi)+\e^2/2}(A_\xi;n-1,k)$ contains a $\gtrapprox 1$ portion of $\cW$ when $p$ is large enough depending on $\e$.
We can use \eqref{excondition} for the quadruple $(a_1,s_1(\xi)+\e^2;n-1,k)$ to obtain
\[ p^{t_1}\lessapprox \#E_{s_1(\xi)+\e^2/2}(A_\xi;n-1,k) \lesssim_\e p^{\e^2/2+\bM(a_1,s_1(\xi)+\e^2;n-1,k)}. \]

By pigeonholing on $s_1(\xi)$, we can find $\Xi\subset \Xi_1$ such that
\[ s_1(\xi)=s_1, \]
for all $\xi\in\Xi$. And, 
\[\#\Xi\gtrapprox\#\Xi_1\gtrapprox p^{a-a_1}.\]

Next, we note that
\[ \sum_{W\in\cW}\sum_{\xi\in\Xi}\#\pi_W^*(A_\xi)\gtrapprox \#\cW\cdot\#\Xi \cdot p^{s_1}. \]
By pigeonholing, there exists $W\in\cW$ such that
\[ \sum_{\xi\in\Xi}\#\pi_W^*(A_\xi)\gtrapprox \#\Xi\cdot p^{s_1}\gtrapprox p^{a-a_1+s_1}. \]
We fix this $W$ and study the behavior of $\cV_W$. Without loss of generality, we can assume $W=\Fp^{n-1-k}\times \bzz^{k}\times \bzz$. For $V\in \cV_W$, then $V\supset W$. We see that $V$ is of form $V=\Fp^{n-1-k}\times \ell_V$, where $\ell_V\in G(1,\bzz^{n-1-k}\times \Fp^{k+1})$. Since $V\cap \Fp^{n-1}\times \bzz=W$, we have that $\ell_V$ is not parallel to $\Fp^{n-1}\times \bzz$.

Next, we project everything onto $\bzz^{n-1-k}\times \Fp^{k+1}$. For $x=(x_1,\dots,x_n)\in\Fp^n$, we define 
\[\pi_{\Fp^{k+1}}(x):=(0,\dots,0,x_{n-k},x_{n-k+1},\dots,x_n).\]
One can also check that $\pi_W^*(x)=\pi_{\Fp^{k+1}}(x)+W$.

Define $B_\xi=\pi_{\Fp^{k+1}}(A_\xi)$. We see that $B_\xi$ is a subset of $\bzz^{n-1-k}\times \Fp^{k+1}$ and $\#B_\xi=\#\pi_W^*(A_\xi)$.

The next important observation is that for any $A\subset \Fp^n$, $V=\Fp^{n-1-k}\times \ell_V\in\cV_W$, one has
\[ \#\pi_{V}^*(A)=\#\pi_{\ell_V}^*(\pi_{\Fp^{k+1}}(A)). \]
This is not hard to verify once writing down the definition.

We can now do the estimate.
By definition, $\#\pi_V^*(A)<p^{s-\e}$ for $V\in\cV_W$. Therefore,
\[ p^{s-\e}> \#\pi_{\ell_V}^*(\pi_{\Fp^{k+1}}(A)). \]
This means that
\[ \{\ell_V: V\in\cV_W\}\subset E_{s-\e}(\pi_{\Fp^{k+1}}(A);k+1,k). \]
Also, note that 
\[ \#\pi_{\Fp^{k+1}}(A)\ge \#\pi_{\Fp^{k+1}}(\bigcup_{\xi\in\Xi}A_\xi)=\#\pi^*_{W}(\bigcup_{\xi\in\Xi}A_\xi)=\sum_{\xi\in\Xi}\#\pi_W^*(A_\xi)\gtrapprox p^{a-a_1+s_1}. \]
By \eqref{excondition} with $n'=k+1$, we get
\[ p^{t_2}\lessapprox \#\cV_W\le \#E_{s-\e}(\pi_{\Fp^{k+1}}(A);k+1,k)\lesssim_\e p^{\e/3+\bM(a-a_1+s_1-\e^2,s-\e/2;k+1,k)}.\]
(It is crucial that we use $s-\e/2$ instead of $s$ on the right hand side.)

Combining with the upper bound of $p^{t_1}$, we get
\[ \#\cV\lessapprox p^{t_1+t_2}\lesssim_\e p^{\e/2+\bM(a_1,s_1+\e^2;n-1,k)+\bM(a-a_1+s_1-\e^2,s-\e/2;k+1,k)}. \]

Let $s_1'=s_1+2\e^2$. Since $s_1\le s-\e$, we have $s_1'\le s$.

Recall $3\e^2<\e/2$ and note $\bM(a,s;n,k)$ is decreasing in $a$. By Lemma \ref{lipina}, we have 
\[\bM(a-a_1+s_1'-3\e^2,s-\e/2;k+1,k)\le \bM(a-a_1+s_1',s;k+1,k).\]
Therefore, under the new $s_1'$, we have
\[ \#\cV\lesssim_\e p^{\e+\bM(a_1,s_1'-\e^2;n-1,k)+\bM(a-a_1+s_1',s;k+1,k)}. \]
Next, we want to slightly increase $a_1$ to $a_1'$ so that $p^{\min\{0,a-1\}}\le p^{a_1'}$. By the condition $p^{\min\{0,a-1\}}\lessapprox p^{a_1}$, we have $p^{\min\{0,a-1\}}\le p^{a_1+\e^2}$ when $p$ is large enough. We just need to let $a_1'=\min\{a_1+\e^2, \min\{0,a-1\}\}$. Noting $a_1'\le a_1+\e^2$ and by Lemma \ref{lipina} again, we have
\[ \bM(a_1,s_1'-\e^2;n-1,k)\le \bM(a_1'-\e^2,s_1'-\e^2;n-1,k)\le \bM(a_1',s_1';n-1,k) \]
We still use $s_1,a_1$ for $s_1',a_1'$. We obtain
\[ \#\cV\lesssim_\e p^{\e+\bM(a_1,s_1;n-1,k)+\bM(a-a_1+s_1,s;k+1,k)}. \]

\end{proof}

\begin{lemma}\label{RecursionM}
     Suppose $n,k\in\N$, $n\ge k+2$, $a\in (0,n]$, $s>0$ and $s\in (a-(n-k),\min\{a,k\}]$. Suppose there exist numbers $a_1,s_1$, such that
    \begin{equation}
        \begin{cases}
            \max\{0,a-1\}\le a_1\le \min\{n-1,a\};\\
            0\le s_1\le s.
        \end{cases}
    \end{equation}
    Then
    \begin{equation}
         \bM(a_1,s_1;n-1,k)+\bM(s_1+a-a_1,s;k+1,k)\le \bM(a,s;n,k).
    \end{equation}
\end{lemma}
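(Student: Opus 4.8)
This is a purely arithmetic inequality about the piecewise-defined function $\bM$, and the plan is to prove it by a brute-force case analysis, exactly in the spirit of the proof of Lemma~\ref{recursionF2}. Write the canonical expressions $a=m+\beta$, $s=l+\gamma$, $a_1=m_1+\beta_1$, $s_1=l_1+\gamma_1$, and set $a':=s_1+a-a_1$, so that the two terms on the left are $\bM(a_1,s_1;n-1,k)$ and $\bM(a',s;k+1,k)$. By hypothesis $s\in(a-(n-k),\min\{a,k\}]$, so $(a,s;n,k)$ is of Type~2 or Type~3 and its value is given explicitly by Definition~\ref{defM} (we distinguish these two cases according to whether $\gamma>\beta$; when a cruder bound suffices we use instead the lower bound of Lemma~\ref{easyM}). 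The constraint $a-1\le a_1\le a$ gives $a-a_1\in[0,1]$, hence $a'\in[s_1,s_1+1]$ and the key identity $a_1+a'=a+s_1$; it also pins $m_1\in\{m-1,m\}$, while $s_1\le s$ gives $l_1\le l$, and the integer part of $a'$ is $l_1$ or $l_1+1$ according to whether $\gamma_1+(a-a_1)\le1$ or $>1$.

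The proof then splits according to the type of each of the two terms on the left. If either term is of Type~4 the left-hand side is $-\infty$ and there is nothing to prove. If a term is of Type~1 it equals its maximal value $k((n-1)-k)$, respectively $k$; this case is easy because $s_1\le s\le k$ forces the first term into Type~1 only when $a_1<s_1$, which together with $a_1\ge a-1$ confines $a$ and $s$ to a narrow range and pushes the second term into a controlled regime, and symmetrically for the second term. In the generic case both terms are of Type~2 or~3; one substitutes the explicit formulas from Definition~\ref{defM} (using that $s$ is unchanged in the second term) and the bounds of Lemma~\ref{easyM} where convenient, adds them, and is left with a finite list of short linear inequalities in $m,m_1,l,l_1,\beta,\beta_1,\gamma,\gamma_1$ — one for each combination of which branch of the various $\max$'s is active and which canonical-expression regime holds. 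Each of these follows from the relations $m_1+\beta_1=a_1\in[a-1,a]$, $l_1+\gamma_1=s_1\le s$, the identity $a_1+a'=a+s_1$, and the Type~2 versus Type~3 dichotomy $\gamma\gtrless\beta$, just as in the case tree of Lemma~\ref{recursionF2}.

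I expect the main obstacle to be organizational rather than conceptual: keeping the case tree exhaustive and mutually consistent. The two delicate points are (i) tracking how the canonical expression of $a'=s_1+(a-a_1)$ changes as $\gamma_1+(a-a_1)$ crosses $1$, since this selects which formula of $\bM(a',s;k+1,k)$ applies, and (ii) the fact that the admissible range of the integer part in Types~2 and~3 shrinks on passing from the $n$-dimensional problem to the $(n-1)$-dimensional one, so a configuration $(a_1,s_1;n-1,k)$ that superficially looks like Type~2 or~3 may actually be Type~4. No individual sub-case is hard — each reduces to a one-line verification — but the enumeration is long and error-prone, so the real work is making sure nothing is missed.
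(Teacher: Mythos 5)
Your proposal follows essentially the same route as the paper's proof: a brute-force case analysis in the canonical expressions $a=m+\beta$, $s=l+\gamma$, $a_1=m_1+\beta_1$, $s_1=l_1+\gamma_1$, using the constraints $m_1\in\{m-1,m\}$, $l_1\le l$, the explicit formulas of Definition \ref{defM} together with the bounds of Lemma \ref{easyM}, and reducing each configuration to short linear inequalities (the paper organizes the tree by the pairs $(m_1,l_1)$ and the dichotomies $\gamma\gtrless\beta$, $\gamma_1\gtrless\beta_1$, which is just a different bookkeeping of the same cases, including your delicate point about the integer part of $s_1+a-a_1$ crossing $l_1+1$). The plan is correct; what remains is exactly the exhaustive enumeration you describe.
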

\begin{proof}
    Write $a=m+\beta,s=l+\ga$; $a_1=m_1+\beta_1, s_1=l_1+\ga_1$ in the canonical expressions. Since $a-1\le a_1\le a$, there are only two choices for $m_1$: $m_1=m$ or $m_1=m-1$. Since $s_1\le s$, we have $l_1\le l$.

    \medskip

    \fbox{Case 1: $m_1=m, l_1\le l-1$}

    By Lemma \ref{easyM},
    \[\bM(a_1,s_1;n-1,k)\le k(n-1-k)-(m-l+1)(k-l+1)+\max\{2\ga_1-(\beta_1+1),0\}.\]

    Trivially,
    \[ \bM(s_1+a-a_1,s;k+1,k)\le k. \]
    
    Also, by Lemma \ref{easyM},
    \[ \bM(a,s;n,k)\ge k(n-k)-(m+1-l)(k-l). \]

    It suffices to prove
    \[ \max\{2\ga_1-\beta_1-1,0\}\le m+1-l. \]
    We just need to note $m\ge l$ and $2\ga_1\le 2$.

\medskip

    \fbox{Case 2: $m_1=m, l_1=l$}

\medskip

    \fbox{Case 2.1: $\ga>\beta,\ga_1>\beta_1$}

    Since $s\le a$ and $\ga>\beta$, we have $m\ge l+1$. Hence,
    \begin{equation*}
        \begin{split}
            \bM(a_1,s_1;n-1,k)&=\bM(m+\beta_1,l+\ga_1;n-1,k)\\
            &=k(n-1-k)-(m-l)(k-l)+\max\{2\ga_1-(\beta_1+1),0\}. 
        \end{split}
    \end{equation*} 

    Next, we estimate $\bM(s_1+a-a_1,s;k+1,k)=\bM(l+\ga_1+\beta-\beta_1,l+\ga;k+1,k)$.
    We claim that we have
    \begin{equation}
        \bM(l+\ga_1+\beta-\beta_1,l+\ga;k+1,k)\le\begin{cases}
             k-(k-l)+\max\{2\ga-(\ga_1+\beta-\beta_1),0\} & \ga_1+\beta-\beta_1\ge \ga;\\
             k & \ga_1+\beta-\beta_1< \ga.
        \end{cases}
    \end{equation} 
    We just consider when $\ga_1+\beta-\beta_1\ge \ga$.
    We note $\ga_1+\beta-\beta_1\in (0,2)$. If $\ga_1+\beta-\beta_1\in(0,1]$, then we use Type 3 in Definition \ref{defM}, which gives the desired formula. If $\ga_1+\beta-\beta_1\in (1,2]$, we use Type 2, which gives $\bM(l+1+(\ga_1+\beta-\beta_1-1),l+\ga;k+1,k)= k-(k-l)+\max\{2\ga-(\ga_1+\beta-\beta_1),0\}$, which is the same. 

    We also have $\bM(a,s;n,k)=k(n-k)-(m-l)(k-l)+\max\{2\ga-(\beta+1),0\}$. 

    When $\ga_1+\beta-\beta_1<\ga$, we just need to prove
    \[ \max\{2\ga_1-(\beta_1+1),0\}\le \max\{2\ga-(\beta+1),0\}, \]
    which is easy to verify, since $\ga_1\le \ga$ and $\ga_1+\beta-\beta_1<\ga$.
    
     When $\ga_1+\beta-\beta_1\ge\ga$, we just need to prove
    \[ \max\{2\ga_1-(\beta_1+1),0\}+\max\{2\ga-(\ga_1+\beta-\beta_1),0\}\le 1+\max\{2\ga-(\beta+1),0\}. \]
    Just need to check three scenarios:

    (1) $2\ga_1-(\beta_1+1)\le 1$, since $\ga_1\le 1$.

    (2) $2\ga-(\ga_1+\beta-\beta_1)\le 1+2\ga-(\beta+1)$, since $\ga_1>\beta_1$.

    (3) Their sum $2\ga_1-(\beta_1+1)+ 2\ga-(\ga_1+\beta-\beta_1)=2\ga+\ga_1-(\beta+1)\le 1+2\ga-(\beta+1)$.

\medskip

\fbox{Case 2.2: $\ga>\beta, \ga_1\le \beta_1$}

In this case, by Lemma \ref{easyM},
\[ \bM(a_1,s_1;n-1,k)\le k(n-1-k)-(m+1-l)(k-l)+\max\{2\ga_1-\beta_1,0\}. \]

\[ \bM(l+\ga_1+\beta-\beta_1,l+\ga;k+1,k)\le k. \]

\[ \bM(a,s;n,k)=k(n-k)-(m-l)(k-l)+\max\{2\ga-(\beta+1),0\}. \]

It suffices to prove
\[ \max\{2\ga_1-\beta_1,0\}\le 1+\max\{2\ga-(\beta+1),0\}. \]
We just use $2\ga_1-\beta_1\le 1$, since $\ga_1\le \beta_1$.

\medskip

\fbox{Case 2.3: $\ga\le \beta,\ga_1>\beta_1$}

\[ \bM(a_1,s_1;n-1,k)\le k(n-1-k)-(m-l)(k-l)+\max\{2\ga_1-(\beta_1+1),0\}. \]
(We use $\le$ instead of $=$, since when $m=l$, $\bM(a_1,s_1;n-1,k)=k(n-1-k)$.)  

Since $\ga_1+\beta-\beta_1>\ga$,
\[ \bM(l+\ga_1+\beta-\beta_1,l+\ga;k+1,k)\le k-(k-l)+\max\{2\ga-(\ga_1+\beta-\beta_1),0\}. \]

Also, we have
\[ \bM(a,s;n,k)=k(n-k)-(m+1-l)(k-l)+\max\{2\ga-\beta,0\}. \]

It suffices to prove
\[ \max\{2\ga_1-(\beta_1+1),0\}+\max\{2\ga-(\ga_1+\beta-\beta_1),0\}\le \max\{2\ga-\beta,0\}. \]

(1) $2\ga_1-(\beta_1+1)\le 2\ga-\beta$, since $s_1\le s$ implies $\ga_1\le \ga$, and $\beta\le 1$.

(2) $2\ga-(\ga_1+\beta-\beta_1)\le 2\ga-\beta$, since $\ga_1>\beta_1$.

(3) $2\ga_1-(\beta_1+1)+2\ga-(\ga_1+\beta-\beta_1)=2\ga+\ga_1-(\beta+1)\le 2\ga-\beta$, since $\ga_1\le 1$.

\medskip

\fbox{Case 2.4: $\ga\le \beta, \ga_1\le \beta_1$}

\[ \bM(a_1,s_1;n-1,k)=k(n-1-k)-(m+1-l)(k-l)+\max\{2\ga_1-\beta_1,0\}. \]

\[ \bM(l+\ga_1+\beta-\beta_1,l+\ga;k+1,k)= \begin{cases}
    k & \ga>\ga_1+\beta-\beta_1\\
    k-(k-l)+\max\{2\ga-(\ga_1+\beta-\beta_1),0\} & \ga\le \ga_1+\beta-\beta_1.
\end{cases}  \]

\[ \bM(a,s;n,k)=k(n-k)-(m+1-l)(k-l)+\max\{2\ga-\beta,0\}. \] 

When $\ga>\ga_1+\beta-\beta_1$, it suffices to show
\[ \max\{2\ga_1-\beta_1,0\}\le \max\{2\ga-\beta,0\}. \]
This is true by noting $\ga_1\le \ga$ and $\ga>\ga_1+\beta-\beta_1$.

When $\ga\le \ga_1+\beta-\beta_1$, it suffices to show 
\[ \max\{2\ga_1-\beta_1,0\}+\max\{2\ga-(\ga_1+\beta-\beta_1),0\}\le 1+\max\{2\ga-\beta,0\}. \]

(1) $2\ga_1-\beta_1\le 1+2\ga-\beta$, since $\ga_1\le \ga$ and $\beta\le 1$.

(2) $2\ga-(\ga_1+\beta-\beta_1)\le 1+2\ga-\beta$, since $\beta_1\le 1$.

(3) $2\ga_1-\beta_1+2\ga-(\ga_1+\beta-\beta_1)=\ga_1-\beta+2\ga\le 1+2\ga-\beta$, since $\ga_1\le 1$.

\bigskip

\fbox{Case 3: $m_1=m-1, l_1\le l-2$}
The proof is similar to \fbox{Case 1}.
By Lemma \ref{easyM},
    \[\bM(a_1,s_1;n-1,k)\le k(n-1-k)-(m-l)(k-l+2)+\max\{2\ga_1-(\beta_1+1),0\}.\]

    Trivially,
    \[ \bM(s_1+a-a_1,s;k+1,k)\le k. \]
    
    Also, by Lemma \ref{easyM},
    \[ \bM(a,s;n,k)\ge k(n-k)-(m-l)(k-l+1). \]

    It suffices to prove
    \[ \max\{2\ga_1-\beta_1-1,0\}\le m-l. \]
    We just need to note $m-l\ge 1$ and $2\ga_1\le 2$.

    \medskip

    \fbox{Case 4: $m_1=m-1,l_1=l-1$}
    \medskip

     \fbox{Case 4.1: $\ga>\beta,\ga_1>\beta_1$}

    Since $s\le a$ and $\ga>\beta$, we have $m\ge l+1$. Hence,
    \begin{equation*}
        \begin{split}
            \bM(a_1,s_1;n-1,k)&=\bM(m-1+\beta_1,l-1+\ga_1;n-1,k)\\
            &=k(n-1-k)-(m-l)(k-l+1)+\max\{2\ga_1-(\beta_1+1),0\}. 
        \end{split}
    \end{equation*} 

    Next, we estimate $\bM(s_1+a-a_1,s;k+1,k)=\bM(l+\ga_1+\beta-\beta_1,l+\ga;k+1,k)$.
   As is \fbox{Case 2.1},
    \begin{equation}
        \bM(l+\ga_1+\beta-\beta_1,l+\ga;k+1,k)\le\begin{cases}
             k-(k-l)+\max\{2\ga-(\ga_1+\beta-\beta_1),0\} & \ga_1+\beta-\beta_1\ge \ga;\\
             k & \ga_1+\beta-\beta_1< \ga.
        \end{cases}
    \end{equation}

    We also have $\bM(a,s;n,k)=k(n-k)-(m-l)(k-l)+\max\{2\ga-(\beta+1),0\}$. 

    When $\ga_1+\beta-\beta_1<\ga$, we only need to prove
    \[ \max\{2\ga_1-(\beta_1+1),0\}\le 1+\max\{2\ga-(\beta+1),0\}, \]
    which is easy to verify,since $\ga_1\le 1$ and $\ga_1+\beta-\beta_1<\ga$.
    
     When $\ga_1+\beta-\beta_1\ge\ga$, we only need to prove
    \[ \max\{2\ga_1-(\beta_1+1),0\}+\max\{2\ga-(\ga_1+\beta-\beta_1),0\}\le 2+\max\{2\ga-(\beta+1),0\}. \]
    We just need to check three scenarios:

    (1) $2\ga_1-(\beta_1+1)\le 2$, since $\ga_1\le 1$.

    (2) $2\ga-(\ga_1+\beta-\beta_1)\le 2$, since $\ga_1+\beta-\beta_1\ge\ga \ge 0$.

    (3) $2\ga_1-(\beta_1+1)+2\ga-(\ga_1+\beta-\beta_1)=2\ga+\ga_1-(\beta+1)\le 2+2\ga-(\beta+1)$, since $\ga_1\le 1$.

\medskip

\fbox{Case 4.2: $\ga>\beta, \ga_1\le \beta_1$}

In this case,
\[ \bM(a_1,s_1;n-1,k)\le k(n-1-k)-(m+1-l)(k-l+1)+\max\{2\ga_1-\beta_1,0\}. \]

\[ \bM(l+\ga_1+\beta-\beta_1,l+\ga;k+1,k)\le k. \]

\[ \bM(a,s;n,k)=k(n-k)-(m-l)(k-l)+\max\{2\ga-(\beta+1),0\}. \]

It suffices to prove
\[ \max\{2\ga_1-\beta_1,0\}\le 1+\max\{2\ga-(\beta+1),0\}. \]
We just use $2\ga_1-\beta_1\le 1$, since $\ga_1\le \beta_1$.

\medskip

\fbox{Case 4.3: $\ga\le \beta,\ga_1>\beta_1$}

\[ \bM(a_1,s_1;n-1,k)\le \begin{cases}
    k(n-1-k)-(m-l)(k-l+1)+\max\{2\ga_1-(\beta_1+1),0\} & m>l\\
    k(n-1-k) & m=l.
\end{cases} \]

Since $\ga_1+\beta-\beta_1>\ga$,
\[ \bM(l+\ga_1+\beta-\beta_1,l+\ga;k+1,k)\le k-(k-l)+\max\{2\ga-(\ga_1+\beta-\beta_1),0\}. \] Also, we can assume $\ga_1+\beta-\beta_1\le \ga+1$; otherwise, it $=-\infty$.

Also, we have
\[ \bM(a,s;n,k)=k(n-k)-(m+1-l)(k-l)+\max\{2\ga-\beta,0\}. \]

When $m=l$, we only need to prove
\[ \max\{2\ga-(\ga_1+\beta-\beta_1),0\}\le \max\{2\ga-\beta,0\}. \]
It is easy to verify $2\ga-(\ga_1+\beta-\beta_1)\le 2\ga-\beta$.

When $m>l$, it suffices to prove
\[ \max\{2\ga_1-(\beta_1+1),0\}+\max\{2\ga-(\ga_1+\beta-\beta_1),0\}\le 1+ \max\{2\ga-\beta,0\}. \]

(1) $2\ga_1-(\beta_1+1)\le 1+ 2\ga-\beta$, since $\ga_1+\beta-\beta_1\le \ga+1$ and $\ga_1\le 1$.

(2) $2\ga-(\ga_1+\beta-\beta_1)\le 1+2\ga-\beta$, since $\ga_1>\beta_1$.

(3) $2\ga_1-(\beta_1+1)+2\ga-(\ga_1+\beta-\beta_1)=2\ga+\ga_1-(\beta+1)\le 2\ga-\beta$, since $\ga_1\le 1$.

\medskip

\fbox{Case 4.4: $\ga\le \beta, \ga_1\le \beta_1$}

\[ \bM(a_1,s_1;n-1,k)=k(n-1-k)-(m+1-l)(k-l+1)+\max\{2\ga_1-\beta_1,0\}. \]

\[ \bM(l+\ga_1+\beta-\beta_1,l+\ga;k+1,k)\le \begin{cases}
    k & \ga>\ga_1+\beta-\beta_1\\
    k-(k-l)+\max\{2\ga-(\ga_1+\beta-\beta_1),0\} & \ga\le \ga_1+\beta-\beta_1.
\end{cases}  \]
We can also assume $\ga_1+\beta-\beta_1\le \ga+1$; otherwise, it $=-\infty$.

\[ \bM(a,s;n,k)=k(n-k)-(m+1-l)(k-l)+\max\{2\ga-\beta,0\}. \] 

When $\ga>\ga_1+\beta-\beta_1$, it suffices to show
\[ \max\{2\ga_1-\beta_1,0\}\le 1+ \max\{2\ga-\beta,0\}. \]
This is true by noting $\ga_1\le 1$ and $\ga>\ga_1+\beta-\beta_1$.

When $\ga\le \ga_1+\beta-\beta_1$, it suffices to show 
\[ \max\{2\ga_1-\beta_1,0\}+\max\{2\ga-(\ga_1+\beta-\beta_1),0\}\le 2+\max\{2\ga-\beta,0\}. \]

(1) $2\ga_1-\beta_1\le 2+2\ga-\beta$, since $\ga_1+\beta-\beta_1\le \ga+1$, $\ga_1\le 1$.

(2) $2\ga-(\ga_1+\beta-\beta_1)\le 2+2\ga-\beta$, since $\beta_1\le 1$.

(3) $2\ga_1-\beta_1+2\ga-(\ga_1+\beta-\beta_1)=\ga_1-\beta+2\ga\le 2+2\ga-\beta$, since $\ga_1\le 1$.
    
\medskip

\fbox{Case 5: $m_1=m-1, l_1=l$}

\medskip

    \fbox{Case 5.1: $\ga>\beta,\ga_1>\beta_1$}

    In this case, we have
    \[ \bM(m-1+\beta_1,l+\ga_1;n-1,k)\le k(n-1-k)-(m-1-l)(k-l)+\max\{2\ga_1-(\beta_1+1),0\}. \]

    \[ \bM(l+1+\ga_1+\beta-\beta_1,l+\ga;k+1,k)\le k-(k-l)+\max\{2\ga-(1+\ga_1+\beta-\beta_1),0\}. \]
    We assume \begin{equation}
        \ga>\ga_1+\beta-\beta_1.
    \end{equation} Otherwise, this is $-\infty$.

    We also have $\bM(a,s;n,k)=k(n-k)-(m-l)(k-l)+\max\{2\ga-(\beta+1),0\}$. We just need to prove
    \[ \max\{2\ga_1-(\beta_1+1),0\}+\max\{2\ga-(1+\ga_1+\beta-\beta_1),0\}\le \max\{2\ga-(\beta+1),0\}. \]
    Check three scenarios:

    (1) $2\ga_1-(\beta_1+1)\le 2\ga-(\beta+1)$, since $\ga>\ga_1+\beta-\beta_1$ and $\ga_1\le \ga$ (since $s_1\le s$).

    (2) $2\ga-(1+\ga_1+\beta-\beta_1)\le 2\ga-(\beta+1)$, since $\ga_1>\beta_1$.

    (3) $2\ga_1-(\beta_1+1)+2\ga-(1+\ga_1+\beta-\beta_1)=2\ga+\ga_1-(\beta+2)\le 2\ga-(\beta+1)$, since $\ga_1\le 1$.

\medskip

\fbox{Case 5.2: $\ga>\beta, \ga_1\le \beta_1$}

In this case, by discussing $m=l$ or $m>l$, we have
\[ \bM(a_1,s_1;n-1,k)\le k(n-1-k)-(m-l)(k-l)+\max\{2\ga_1-\beta_1,0\}. \]

\[ \bM(l+1+\ga_1+\beta-\beta_1,l+\ga;k+1,k)\le k-(k-l)+\max\{2\ga-(1+\ga_1+\beta-\beta_1),0\}. \]

\[ \bM(a,s;n,k)=k(n-k)-(m-l)(k-l)+\max\{2\ga-(\beta+1),0\}. \]

It suffices to prove
\[ \max\{2\ga_1-\beta_1,0\}+\max\{2\ga-(1+\ga_1+\beta-\beta_1),0\}\le 1+\max\{2\ga-(\beta+1),0\}. \]

(1) $2\ga_1-\beta_1\le 1$, since $\ga_1\le \beta_1$.

(2) $2\ga-(1+\ga_1+\beta-\beta_1)\le 2\ga-\beta$, since $\beta_1\le 1$.

(3) $2\ga+\ga_1-(1+\beta)\le 2\ga-\beta$, since $\ga_1\le 1$.

\medskip

\fbox{Case 5.3: $\ga\le \beta,\ga_1>\beta_1$}

\[ \bM(l+1+\ga_1+\beta-\beta_1,l+\ga;k+1,k)=-\infty. \]
There is nothing to do.

\medskip

\fbox{Case 5.4: $\ga\le \beta, \ga_1\le \beta_1$}

\[ \bM(a_1,s_1;n-1,k)\le k(n-1-k)-(m-l)(k-l)+\max\{2\ga_1-\beta_1,0\}. \]

\[ \bM(l+1+\ga_1+\beta-\beta_1,l+\ga;k+1,k)\le 
    k-(k-l)+\max\{2\ga-(1+\ga_1+\beta-\beta_1),0\}.  \]
We assume
\begin{equation}
\ga>\ga_1+\beta-\beta_1.
\end{equation}
Otherwise, this is $-\infty.$

\[ \bM(a,s;n,k)=k(n-k)-(m+1-l)(k-l)+\max\{2\ga-\beta,0\}. \] 

It suffices to show 
\[ \max\{2\ga_1-\beta_1,0\}+\max\{2\ga-(1+\ga_1+\beta-\beta_1),0\}\le \max\{2\ga-\beta,0\}. \]

(1) $2\ga_1-\beta_1\le 2\ga-\beta$, since $\ga>\ga_1+\beta-\beta_1$ and $\ga_1\le \ga$.

(2) $2\ga-(1+\ga_1+\beta-\beta_1)\le 2\ga-\beta$, since $\beta_1\le 1$.

(3) $2\ga+\ga_1-\beta-1\le 2\ga-\beta$, since $\ga_1\le 1$.
\end{proof}

\begin{proof}[Proof of Proposition \ref{expropupper2}]
    We just combine Lemma \ref{leminducM} and Lemma \ref{RecursionM}.
\end{proof}

\begin{proof}[Proof of Theorem \ref{exceptionalthm}]
    Finally, we see that Theorem \ref{exceptionalthm} can be obtained from Proposition \ref{exprop1} and Proposition \ref{expropupper2} by induction on $(n,k)$.
\end{proof}

\section*{Acknowledgments} 
The author is grateful to the anonymous reviewers for finding
typos in the paper.

\bibliographystyle{amsplain}


\begin{dajauthors}
\begin{authorinfo}[sgan]
  Shengwen Gan\\
Department of Mathematics\\
 University of Wisconsin-Madison\\
 Madison, WI-53706, USA
   \\
   \url{https://sites.google.com/view/shengwengan/home}
\end{authorinfo}

\end{dajauthors}

\end{document}